\definecolor{cobalt}{rgb}{0.0, 0.28, 0.67}
\numberwithin{equation}{section}
\newtheorem{theorem}{Theorem}[section]
\theoremstyle{plain}
\newtheorem{lemma}[theorem]{Lemma}
\theoremstyle{plain}
\newtheorem{proposition}[theorem]{Proposition}
\theoremstyle{plain}
\newtheorem{corollary}[theorem]{Corollary}
\newtheorem{definition}[theorem]{Definition}
\theoremstyle{definition}
\newtheorem{remark}[theorem]{Remark}
\newcommand{\R}{{\mathbb R}}
\newcommand{\eps}{\varepsilon}
\newcommand{\beq}{\begin{equation}}
\newcommand{\eeq}{\end{equation}}
\renewcommand{\le}{\leqslant}
\renewcommand{\ge}{\geqslant}
\newcommand{\cal}{\mathcal}
\def\XXint#1#2#3{{\setbox0=\hbox{$#1{#2#3}{\int}$ }
\vcenter{\hbox{$#2#3$ }}\kern-.6\wd0}}
\xpretocmd{\@adminfootnotes}{\let\@makefntext\BHFN@OldMakefntext}{}{}
\renewcommand\@makefntext[1]{%
 \@ifundefined{@makefnmark}
 {}
 {%
 \renewcommand\@makefnmark{%
 \mbox{%
 \textsuperscript{%
 \normalfont
 \hyperref[\BackrefFootnoteTag]{\@thefnmark}%
 }%
 }\,%
 }%
 \BHFN@OldMakefntext{#1}%
 }%
}
\patchcmd{\section}{\normalfont}{\normalfont \large \bfseries}{}{}
\patchcmd{\subsection}{\normalfont}{\normalfont \large}{}{}
\patchcmd{\subsection}{-.5em}{.5\linespacing}{}{}
\patchcmd{\subsubsection}{-.5em}{.5\linespacing}{}{}
\DeclareRobustCommand{\SkipTocEntry}[5]{}
\let\oldtocsection=\tocsection
\let\oldtocsubsection=\tocsubsection
\let\oldtocsubsubsection=\tocsubsubsection
\renewcommand{\tocsection}[2]{\hspace{0em}\oldtocsection{#1}{#2}}
\renewcommand{\tocsubsection}[2]{\hspace{1em}\oldtocsubsection{#1}{#2}}
\renewcommand{\tocsubsubsection}[2]{\hspace{2em}\oldtocsubsubsection{#1}{#2}}
\newcommand{\mc}[1]{\mathcal{#1}}
\title[Power law convergence and Logarithmic Schrödinger equation]{Power law convergence and concavity for the Logarithmic Schrödinger equation}
\author[M.\ Gallo]{Marco Gallo \orcidlink{0000-0002-3141-9598}}
\author[S. \ Mosconi]{Sunra Mosconi \orcidlink{0000-0003-2432-7799}}
\author[M.\ Squassina]{Marco Squassina \orcidlink{0000-0003-0858-4648}}
\address[S.\ Mosconi]{\newline\indent 
Dipartimento di Matematica e Informatica
\newline\indent
Università di Catania
\newline\indent
Italy, Catania, CT, Viale A.$\,$Doria 6, 95125
}
\email{\href{mailto:sunra.mosconi@unict.it}{sunra.mosconi@unict.it}}
\address[M.\ Gallo, M.\ Squassina]{\newline\indent Dipartimento di Matematica e Fisica
	\newline\indent
	Università Cattolica del Sacro Cuore
	\newline\indent
	Italy, Brescia, BS, Via della Garzetta 48, 25133}
\email{\href{mailto:marco.gallo1@unicatt.it}{marco.gallo1@unicatt.it}}
\email{\href{mailto:marco.squassina@unicatt.it}{marco.squassina@unicatt.it}}
\thanks{\emph{Fundings.} All authors are members of INdAM-GNAMPA. 
The first author 
is supported by 
INdAM-GNAMPA Projects
{\em Metodi variazionali per problemi dipendenti da operatori frazionari isotropi e anisotropi} (CUP E5324001950001), 
{\em Aspetti geometrici e qualitativi di equazioni ellitiche e paraboliche} (CUP E53C25002010001).
The second author is supported by INdAM-GNAMPA project {\em Problemi non locali di tipo stazionario ed evolutivo} (CUP E53C23001670001), projects PIACERI linea 2/3 of the University of Catania and PRIN project 2022ZXZTN2.
}
\subjclass[2020]{
26B25, 
35B09, 
35B53, 
35B99, 
35E10, 
35J60. 
}
\keywords{
Log-concavity of solutions; 
Logarithmic Schrödinger equation;
Superlinear Lane-Emden equation;
Ground state;
Liouville theorems;
Unique nondegenerate critical point.
}
\begin{document}

\begin{abstract}
We study concavity properties of positive solutions to the Logarithmic Schrödinger equation $-\Delta u=u\, \log u^2$ in a general convex domain with Dirichlet conditions. 
To this aim, we analyse the auxiliary Lane-Emden problems $-\Delta u = \sigma\, (u^q-u)$ and build, for any $\sigma>0$ and $q>1$, solutions $u_q$ such that $u_q^{(1-q)/2}$ is convex. 
By choosing $\sigma_q=2/(q-1)$ and letting $q \to 1^+$ we eventually construct a solution $u$ of the Logarithmic Schrödinger equation such that $\log u$ is concave. 
This seems to be one of the few attempts at studying concavity properties for \emph{superlinear}, \emph{sign changing} sources. 
To get the result, we both make inspections on the constant rank theorem and develop Liouville theorems on convex epigraphs, which might be useful in other frameworks.
\end{abstract}

\maketitle

\begin{center}
\vspace{-1.5em}
\begin{minipage}{12cm}
\tableofcontents
\end{minipage}
\end{center}

\section{Introduction}\label{sec1}

 \subsection{Overview}
 
The main goal of this paper is to study the following \emph{nondispersive Logarithmic Schrödinger} equation
\beq
\label{LSeq}
\begin{cases}
-\Delta u=u\, \log u^2& \text{in $\Omega$}\\
u>0&\text{in $\Omega$}\\
u=0&\text{on $\partial\Omega$}
\end{cases}
\eeq
on some $\Omega \subset \R^N$ bounded, $N\geq 1$, together with the following \emph{Lane-Emden} equation
\beq
\label{LEeq}
\begin{cases}
-\Delta u=\sigma\, (u^q-u)&\text{in $\Omega$}\\
u>0&\text{in $\Omega$}\\
u=0&\text{on $\partial\Omega$}
\end{cases}
\eeq
when $\sigma>0$ and $q \in\ ]1, 2^*-1[$, $2^*:=2N/(N-2)$ if $N\geq 3$ or $2^*:=\infty$ if $N\le 2$.
In particular, following the heuristic 
\beq
\label{heu}
-\Delta u=\frac{2}{q-1} (u^q-u) \quad \longrightarrow\quad -\Delta u= u\log u^2
\eeq
 as $q \to 1^+$, we aim to study the convergence of the solutions of \eqref{LEeq} with $\sigma = 2/(q-1)$ to a solution of \eqref{LSeq}. 
When the domain $\Omega$ is convex, we investigate the concavity properties of solutions to \eqref{LEeq} and, as a byproduct of the aforementioned convergence, we obtain $\log$-concavity for a solution of \eqref{LSeq}, which is our main result.

\medskip

The logarithmic equation \eqref{LSeq}, mainly introduced in \cite{BBM76}, finds applicability in atomic physics, high-energy cosmic rays, Cherenkov type shock waves, quantum hydrodynamical models and many other fields; we refer for instance to \cite{Zlo10, Par24, Hef85, GLN10, Car22}. 
 
Equation \eqref{LSeq} enjoys the \emph{tensorization property}, which amounts to the following: if $u_i$ is a solution of \eqref{LSeq} on $\Omega_i$, $i=1,2$, then $(u_1\otimes u_2) (x, y)= u_1(x)\, u_2(y)$ solves \eqref{LSeq} on $\Omega_1 \times \Omega_2$.
We highlight that, being $\Omega$ bounded, the equation is well defined from a variational point of view since $H^1(\Omega) \subset L^1(\Omega)$. The term {\em nondispersive} actually refers to the evolutive version of \eqref{LSeq}, namely
\[
i\, \partial_t u+\frac{1}{2}\, \Delta u= \lambda\, u\, \log u.
\]
This wave equation is dispersive when $\lambda>0$, while it is nondispersive if $\lambda<0$, as shown for the first time in \cite{Caz83}. While we will also consider the dispersive version of \eqref{LSeq} (hence with the reaction $-u\, \log u^2$ on the right hand side), the most difficult and interesting case turns out to be the nondispersive one.

In contrast, the Lane-Emden equation \eqref{LEeq} which, up to a rescaling, can be rewritten as
$$-\Delta u + \lambda\, u = u^q \quad \hbox{in $\Omega$}$$
for some $\lambda>0$, is more classical and has a long history as power type equation related to the operator $-\Delta + \lambda$, see e.g.$\,$\cite{Dan95, DGP99}. 

\medskip

The aim of detecting concavity properties for solutions of the general Dirichlet problem
\begin{equation}
\label{eq_general}
\begin{cases}
-\Delta u=f(u)& \text{in $\Omega$}\\
u>0&\text{in $\Omega$}\\
u=0&\text{on $\partial\Omega$}
\end{cases}
\end{equation}
goes back to \cite{Mak71, BrLi76}, who investigated respectively the $1/2$-concavity of the solution to the torsion problem (i.e.$\;$for $f(u)=1$) and the $\log$-concavity of the first eigenfunction (thus for $f(u)=\lambda_1\, u$). 
Here and in the following, by \emph{$\varphi$-concavity} of a function $u$ we mean the concavity of $\varphi(u)$, and whenever $\alpha\in \R$ by \emph{$\alpha$-concavity} we mean $\varphi$-concavity for $\varphi(t)=\alpha\, t^\alpha$, with the limiting case of $0$-concavity being synonym of $\log$-concavity. 
The corresponding {\em quasi-concavity} property (i.e.$\;$convexity of super-level sets) has been widely investigated in the last decades: we refer to \cite{GaSq25, AAGS24} and references therein for an overview on the topic. 
Lions \cite{Lio81} conjectured that any solution to \eqref{eq_general} in a convex domain is quasi-concave, but the counterexample in \cite{HNS16} shows that \eqref{eq_general} 
may have solutions which are not quasi-concave, even assuming $f(u)\ge 1$ for $u>0$, $f$ smooth, and $\Omega$ convex, smooth and symmetric. 
It thus makes sense to weaken Lions' conjecture, investigating if equation \eqref{eq_general} in a convex $\Omega$ has {\em at least one} quasi-concave solution.

Most of the known results in this direction deal with \emph{sub-homogeneous} and \emph{positive} nonlinearities: for example, \cite{Ken85} studies the case of powers $f(u)=u^q$, $q \in\ ]0,1[$, and similar results have been extended to the $p$-Laplacian case in \cite{Sak87} for $q \in\ ]0,p-1[$. 
Note that in these instances, the required behaviour of the reaction $f$ ensures that the solution of \eqref{eq_general} is unique, except at most in the limit case $q=1$ corresponding to the first eigenfunction of the Dirichlet Laplacian.

The only instances known to the authors where quasi-concavity is obtained for super-linear reactions are \cite{Lin94, LeVa08} in the model case $f(u)=u^q$, $q>1$ of \eqref{eq_general}. 
In particular, \cite{Lin94} showed that in a convex $\Omega \subset \R^2$, for each $q>1$ there exists a unique ground state solution of \eqref{eq_general}, which turns out to be $(1-q)/2$-concave (see also \cite{GaSq25} for some partial result on the $p$-Laplacian case). 
Note that, for this equation, uniqueness of the solution is not ensured for general $\Omega$ and $q>1$, while for convex $\Omega$ uniqueness is widely conjectured to hold, but still not known (see Remark \ref{rem_th_power} for some comments). 
For the same equation, the existence of a $(1-q)/2$-concave solution has been proved also in \cite[Corollary 4.7]{LeVa08} by means of parabolic techniques.
 
The case of more general super-homogeneous nonlinearities seems to be nontrivial, as it escapes the direct applicability of the classical concavity maximum principles in \cite{Kor83, Ken85}; indeed, the proof by \cite{Lin94} relies on a continuation argument on $q$ and the fact that, when $q\to 1$, the ground states converge to the first eigenfunction of the Laplacian, which is strongly $\log$-concave. This is our approach as well.

When passing from $f(u)=u^q$ to $f(u)=u^q-u$ or $f(u)=u\, \log u^2$ two difficulties arise.
The first one is related to the uniqueness of ground states, which is at present not known even for convex planar domains (see Remark \ref{rem_Th_main_log} for some further comments on this point). 
The second difficulty is related to the behaviour of $f$: the fact that $f(u)$ is negative and decreasing in $u$ near $0$ rules out the applicability of most methods to prove quasi-concavity. 
For example, the approach of \cite{BMS22, MRS24}, allowing to deal with general reactions $f$, cannot even be set in motion since the sought $\varphi$-concavity of a solution to \eqref{eq_general} would involve the natural transformation 
\begin{equation}\label{eq_transf_BMS}
\varphi(t):=\int_1^t \frac{1}{\sqrt{F(\tau)}}\, d\tau, \qquad F(t):=\int_0^t f(\tau) \, d \tau
\end{equation}
which is not even well defined (see also Remark \ref{rem_Th_main_log}). The $\log$- (or quasi-) \emph{concave envelope methods} \cite{ALL97, BiSa13, IsSa14}, instead, requires a comparison principle which does not seem to hold for this kind of reactions. 

The parabolic technique such as the one in \cite{Lio81} also has issues. It typically relies on showing that solutions of the semilinear parabolic problem
\begin{equation}\label{eq_parab_tec}
\begin{cases}
\partial_t u - \Delta u = f(u) &\text{in $\Omega\times\, ]0, +\infty[$}\\
u= 0 &\text{on $\partial\Omega\times [0, +\infty[$}\\
u(\cdot , 0)=u_0
\end{cases} 
\end{equation}
preserve $\log$-concavity, which is then inherited by a limiting (for $t\to \infty$) stationary solution from any $\log$-concave initial datum $u_0$; see Remark \ref{rem_Lions1981}. 
By \cite{GrKa99, IST24} $\log$-concavity is indeed preserved by \eqref{eq_parab_tec} with $f(u)=u\, \log u^2$ and, despite the super-linear behaviour of the reaction, blow-up in finite time does not occur, see \cite[Proof of (4.4)]{CLL15}. 
Unfortunately, however, for most initial data the corresponding solution of \eqref{eq_parab_tec} either blows-up or vanishes as $t\to \infty$ and it is not clear how to select a $\log$-concave initial datum producing a finite and nontrivial limit; see \cite[Remark 4.6]{GMS25} for some details.

 \subsection{Main results}

We present now our main result, ensuring concavity properties for a solution of \eqref{LSeq}.

\begin{theorem}\label{thm_main_log}
Let $\Omega$ be bounded and convex. Then there exists a locally strongly $\log$-concave solution of \eqref{LSeq}.
\end{theorem}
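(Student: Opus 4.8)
The plan is to realize the solution of \eqref{LSeq} as a limit, along a sequence $q_n\to1^+$, of solutions of the Lane--Emden problems \eqref{LEeq} with the critical coupling $\sigma=\sigma_q:=2/(q-1)$ dictated by the heuristic \eqref{heu}. The basic input is the Lane--Emden concavity fact announced above: for every $\sigma>0$ and $q\in\,]1,2^*-1[$ there is a solution $u_q$ of \eqref{LEeq} such that $v_q:=u_q^{(1-q)/2}$ is convex on $\Omega$ (equivalently, $u_q$ is $(1-q)/2$-concave). Granting this, the mechanism driving the argument is that
\[
g_q:=\frac{2}{1-q}\,\bigl(v_q-1\bigr)=\frac{2}{1-q}\Bigl(u_q^{(1-q)/2}-1\Bigr)
\]
is an affine image of the convex function $v_q$ with a \emph{negative} leading coefficient, hence \emph{concave} on $\Omega$, while from $u_q^{(1-q)/2}=\exp\!\bigl(\tfrac{1-q}{2}\log u_q\bigr)=1+\tfrac{1-q}{2}\log u_q+O\bigl((q-1)^2(\log u_q)^2\bigr)$ one gets $g_q=\log u_q+O(q-1)$ on any region where $u_q$ is bounded and bounded away from $0$. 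Thus, once $u_{q_n}\to u$ with $u>0$, the $g_{q_n}$ converge locally uniformly to $\log u$, and since a locally uniform limit of concave functions is concave, $\log u$ will be concave; the \emph{strong} character is recovered at the very end.

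\textbf{Step 1 (concavity for Lane--Emden).} This is where the bulk of the work lies and the step I expect to be the main obstacle. Up to the scaling $u=\sigma^{-1/(q-1)}w$, problem \eqref{LEeq} becomes $-\Delta w+\sigma w=w^q$, and one runs the continuation scheme of \cite{Lin94}: the set of exponents $q\in[1,2^*-1[$ for which the relevant (rescaled) ground state is strongly $(1-q)/2$-concave contains $q=1$, since there the problem degenerates, after normalization, to the first Dirichlet eigenfunction of $-\Delta$, which is strongly $\log$-concave by Brascamp--Lieb \cite{BrLi76}; one then shows this set is relatively closed and open in $[1,2^*-1[$. Closedness uses compactness, the a priori estimates, and the stability of $(1-q)/2$-concavity under locally uniform limits. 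Openness is the delicate point: one must rule out a degeneracy of $D^2 v_q$, both in the limit and under perturbation of $q$. Here $v_q$ solves a fully nonlinear elliptic equation derived from \eqref{LEeq}, to which a suitable constant rank theorem applies (these are the announced ``inspections on the constant rank theorem''), so $D^2 v_q$ has constant rank on connected components; were this rank not maximal, a blow-up at a degeneracy point would produce an entire convex solution, with Hessian of deficient rank, on a half-space or more generally on a convex epigraph — and this is excluded by the Liouville theorems on convex epigraphs developed in the paper. The genuine difficulty relative to \cite{Lin94,LeVa08} is that the reaction $\sigma(u^q-u)$ is superlinear \emph{and} sign-changing near $\partial\Omega$, which disables the transformation \eqref{eq_transf_BMS}, the concave-envelope methods \cite{ALL97,IsSa14,BiSa13}, and — by \cite[Corollary 4.7]{IST24} — the parabolic preservation arguments \eqref{eq_parab_tec}; it also obstructs the known uniqueness statements for ground states, so a distinguished, nondegenerate branch must be selected and continued (using, e.g., the unique-critical-point property).

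\textbf{Step 2 (uniform estimates and the limit $q\to1^+$).} Fix now a sequence $q_n\to1^+$, take $\sigma=\sigma_{q_n}=2/(q_n-1)$, and let $u_{q_n}$ be the solutions furnished by Step 1. One needs uniform control of $u_{q_n}$ as $q_n\to1^+$: an $L^\infty(\Omega)$ bound, a positive lower bound on each compact $K\subset\subset\Omega$ (so that $\log u_{q_n}$ stays bounded on $K$), interior $C^{2,\alpha}$ estimates, and a barrier argument near $\partial\Omega$ guaranteeing that the limit is nontrivial and attains the Dirichlet datum — precisely the ``power law convergence'' estimates of the paper. Along a subsequence $u_{q_n}\to u$ in $C^2_{\mathrm{loc}}(\Omega)$, with $u>0$ in $\Omega$ and $u=0$ on $\partial\Omega$; and since
\[
\sigma_{q_n}\bigl(u_{q_n}^{q_n}-u_{q_n}\bigr)=\frac{2}{q_n-1}\,u_{q_n}\bigl(u_{q_n}^{q_n-1}-1\bigr)=2\,u_{q_n}\log u_{q_n}+O(q_n-1)\ \longrightarrow\ u\,\log u^2
\]
locally uniformly, $u$ solves \eqref{LSeq}.

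\textbf{Step 3 (passing concavity to the limit and the strong bound).} By Step 1 each $g_{q_n}$ is concave, and by the expansion above together with Step 2 it converges locally uniformly to $\log u$, so $\log u$ is concave. To upgrade to \emph{locally strong} $\log$-concavity, one applies a constant rank theorem directly to $w:=-\log u$, which is convex (by what was just proved) and solves the semilinear equation $\Delta w=|\nabla w|^2-2w$ derived from \eqref{LSeq}: then $D^2 w$ has locally constant rank, and a non-maximal rank would make $w$ affine along a fixed direction across the bounded convex set $\Omega$, contradicting $w\to+\infty$ on $\partial\Omega$ (equivalently, contradicting the Liouville theorems on convex epigraphs). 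Hence $D^2 w$ is positive definite on compact subsets of $\Omega$, that is, $u$ is locally strongly $\log$-concave, which is the assertion.
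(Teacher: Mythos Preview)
Your overall strategy matches the paper's: take the $(1-q)/2$-concave Lane--Emden solutions from Theorem~\ref{thm_main_power} with $\sigma=2/(q-1)$, pass to the limit $q\to 1^+$ via Theorem~\ref{thm_convergence}, observe that the concave functions $g_q$ converge to $\log u$, and then upgrade to \emph{strong} $\log$-concavity via the constant rank theorem applied to $w=-\log u$ solving $\Delta w=|Dw|^2-2w$. Step~3, however, has two gaps.

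First, the Korevaar--Lewis constant rank theorem requires the \emph{strict} inequality $\Delta w>0$; convexity of $w$ only gives $\Delta w\ge 0$. For $b(t,z)=|z|^2-2t$ this is not automatic: near $\partial\Omega$ one has $u<1$, hence $w=-\log u>0$ and the sign of $|Dw|^2-2w$ is genuinely unclear. The paper devotes Theorem~\ref{prop_lemmalappos} to precisely this point, combining the level-set constant rank theorem of \cite{Kor90} (Proposition~\ref{kore90}) with a strong maximum principle argument to force $\Delta w>0$ everywhere. Second, your full-rank argument --- ``non-maximal constant rank would make $w$ affine along a fixed direction'' --- is unjustified: constant rank of $D^2w$ means the null space has constant \emph{dimension}, not that it is a fixed \emph{subspace}, and deducing the latter is not immediate. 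The paper instead uses Proposition~\ref{lem_basener}: since $w\to+\infty$ on $\partial\Omega$, ${\rm Argmin}(w)\Subset\Omega$, and any neighbourhood of it contains a point with $D^2w>0$, so the (constant) rank must be full. Both ingredients are packaged in Corollary~\ref{corstrongconv}, which is what the paper actually invokes. As a side remark, your references to the Liouville theorem in Steps~1 and~3 are misplaced: in the paper Theorem~\ref{thm_main_liouville} enters only through the a priori $L^\infty$ bounds of Lemma~\ref{lem_apriori_est} via a blow-up contradiction, not in any Hessian-degeneracy or constant-rank argument.
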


By \emph{local strong concavity} of a $C^2(\Omega)$ function $v$ we mean $D^2 v<0$ in $\Omega$, in the matrix sense.
The proof of Theorem \ref{thm_main_log} is mainly based on concavity properties of solutions of the Lane-Emden equation, coupled with the heuristics \eqref{heu}, as detailed in the following two results, which have some relevance by themselves.
 
 \begin{theorem}
\label{thm_main_power}
Let $\Omega$ be bounded and convex. Then, for each $\sigma>0$ and $q\in \ ]1, 2^*-1[$, there exists a locally strongly $(1-q)/2$-concave solution of \eqref{LEeq}.
\end{theorem}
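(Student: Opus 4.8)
The plan is to realize the $(1-q)/2$-concave solution of \eqref{LEeq} as a limit along a continuation in the exponent, mimicking the strategy of \cite{Lin94} but upgraded to work in all dimensions and for the shifted nonlinearity $f(u)=\sigma(u^q-u)$. Fix $\sigma>0$ and set $v=u^{(1-q)/2}$, so that $u=v^{2/(1-q)}$; computing $-\Delta u$ in terms of $v$ turns \eqref{LEeq} into a quasilinear equation for $v$ in which the desired conclusion is simply convexity of $v$. First I would set up the continuation parameter $q\in\ ]1,2^*-1[$ and consider the branch of ground state solutions $u_q$ of \eqref{LEeq}; since uniqueness on convex domains is not available, the branch must be extracted by a connectedness/degree argument rather than taken for granted — one fixes a base point near $q=1$, where $u_q$ is close to the first Dirichlet eigenfunction $\varphi_1$ (after the rescaling built into \eqref{heu}, which is why the factor $2/(q-1)$ is the natural normalization), and $\varphi_1$ is strongly $\log$-concave by \cite{BrLi76}. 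The set of $q$ for which a strongly $(1-q)/2$-concave solution exists is then shown to be nonempty, open, and closed.

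Openness and the propagation of concavity is where the constant rank theorem enters: one shows that if $v_q=u_q^{(1-q)/2}$ is convex and the solution is nondegenerate, then the Hessian $D^2 v_q$ cannot have a rank drop in the interior unless it has constant rank, and a boundary analysis (using that $f(u)=\sigma(u^q-u)<0$ near $\partial\Omega$, so $u_q$ is superharmonic-like near the boundary and $v_q\to+\infty$ there) forces strict convexity, i.e. $D^2v_q>0$, which is exactly local strong $(1-q)/2$-concavity of $u_q$. This is the step that requires the "inspections on the constant rank theorem" advertised in the abstract: the quasilinear structure of the $v$-equation and the sign-changing, superlinear source mean the classical statements of \cite{Kor83, Ken85, BiSa13} do not apply off the shelf, so one needs a version tailored to this operator, presumably proved earlier in the paper.

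Closedness is the analytic heart: along a sequence $q_n\to q_*\in\ ]1,2^*-1[$ one needs uniform estimates on $u_{q_n}$ — a positive lower bound on compact subsets, an upper bound, and $C^2_{loc}$ bounds — to pass to a limit solving \eqref{LEeq} at $q_*$, and then one must rule out that the limiting $v_{q_*}$ degenerates (becomes merely convex but not strictly, or that $u_{q_*}$ loses nondegeneracy). Here is where the Liouville theorems on convex epigraphs come in: a blow-up at a hypothetical degeneracy point of $v_{q_*}$ produces an entire solution on a convex epigraph (a half-space or a convex cone) of a limiting equation, and the Liouville theorem forces it to be trivial or one-dimensional, contradicting the degeneracy. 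I expect closedness — specifically, the uniform lower bounds and the exclusion of boundary concentration as $q\to 1^+$, together with making the blow-up/Liouville dichotomy rigorous on the relevant epigraphs — to be the main obstacle; the algebraic change of variables and the constant-rank propagation, while technical, are comparatively routine once the right constant-rank statement is in hand.
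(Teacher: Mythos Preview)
Your overall skeleton---continuation in $q$, convergence to the first eigenfunction near $q=1$, and a nonempty/open/closed argument on a connected branch of solutions---matches the paper's strategy. However, you have swapped the roles of the two main tools, and one of your proposed uses does not correspond to anything in the proof.

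The Liouville theorem on convex epigraphs is \emph{not} used to rule out Hessian degeneracy of $v_{q_*}$. It enters only through the a~priori $L^\infty$ bound on solutions (Lemma~\ref{lem_apriori_est}), via a Gidas--Spruck blow-up of $u_q$ itself: if $\|u_q\|_\infty$ were unbounded, a rescaling would produce a bounded positive solution of $-\Delta u=u^q$ on $\R^N$ or on a convex epigraph, which Theorem~\ref{thm_main_liouville} forbids. This bound is what makes the branch $\mathcal C$ compact in $C^2(\overline\Omega)$ and allows the continuation to run; it is not invoked again. Your proposed ``blow-up at a degeneracy point of $D^2v_{q_*}$'' is not carried out in the paper, and you would need to say what limiting equation arises and why a Liouville theorem applies to it---this is not clear, since the equation for $v$ is $\Delta v=b(v,Dv)$ with $b$ singular as $v\to 0$.

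Conversely, the constant rank theorem is used for \emph{closedness}, not openness. At a limit point the pointwise limit $v_{q_*}$ is merely convex; Corollary~\ref{corstrongconv} (which packages Theorem~\ref{prop_lemmalappos} and the Korevaar--Lewis theorem, together with Basener's lemma to force full rank) upgrades this to strong convexity. Openness is handled more simply: a uniform boundary estimate (the matrix $M(v_q)$ in \eqref{eq_def_Mq} satisfies $M(v_q)\ge\theta\,\mathrm{Id}$ in a collar $\Omega_\delta$, uniformly along the branch, by Hopf's lemma and the strong convexity of $\partial\Omega$) handles the boundary layer, and $C^2$-convergence handles the interior.

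Finally, you are missing a structural layer: the continuation argument (Theorems~\ref{thm_concav_gs} and \ref{qcsol}) is run on a \emph{smooth strongly convex} domain, where the boundary estimate above is available. The general bounded convex $\Omega$ in Theorem~\ref{thm_main_power} is then reached by approximating $\Omega$ from outside by smooth strongly convex $\Omega_n$, passing to the limit using the domain-uniform a~priori bound \eqref{ape1}, and invoking Corollary~\ref{corstrongconv} once more to recover strict concavity on $\Omega$. Also, for this theorem $\sigma$ is fixed and the convergence to $\varphi_1$ is after normalising by $\|u_q\|_\infty$ (Proposition~\ref{prop_conv_eigenf}); the choice $\sigma=2/(q-1)$ belongs to Theorem~\ref{thm_main_log}, not here.
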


\begin{theorem}
\label{thm_convergence}
Let $\Omega$ be bounded and convex, and $u_n$ solve \eqref{LEeq} with parameters $q_n \in \ ]1, 2^*-1[$, $q_n \to 1$ as $n \to +\infty$, and $\sigma_n = 2/(q_n-1)$. 
Then up to subsequences $u_n$ converges in $C^0(\overline{\Omega})\cap C^2_{\rm loc}(\Omega)\cap W^{1,2}_0(\Omega)$ to a solution of \eqref{LSeq}. 
Moreover, if $u_n$ are ground states for \eqref{LEeq}, then $u$ is a ground state for \eqref{LSeq} as well.
\end{theorem}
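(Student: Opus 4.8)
The plan is to get $u$ as a subsequential limit of $(u_n)$, passing to the limit in the equation through the pointwise convergences $\sigma_n(t^{q_n}-t)=\tfrac{2}{q_n-1}(t^{q_n}-t)\to t\log t^2$ (cf.\ \eqref{heu}) and $\sigma_n\bigl(\tfrac{t^{q_n+1}}{q_n+1}-\tfrac{t^2}{2}\bigr)\to t^2\log t-\tfrac{t^2}{2}$ of the reactions and of their primitives, and comparing variational levels in the ground state case. The preliminary step is a priori bounds. The lower bound $\|u_n\|_{L^\infty(\Omega)}\ge1$ is elementary: on the open set $\{u_n<1\}$ one has $-\Delta u_n=\sigma_n u_n(u_n^{q_n-1}-1)\le 0$, so $u_n$ is subharmonic there, and since $u_n=0$ on $\partial\Omega$ this set cannot be all of $\Omega$. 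The decisive point — which I expect to be the main obstacle — is a uniform upper bound $\|u_n\|_{L^\infty(\Omega)}\le C$, with $C$ independent of $n$ (for $q_n$ near $1$); granting it, the inequality $|e^x-1|\le|x|e^{|x|}$ shows that $|\sigma_n(t^{q_n}-t)|\le C'$ for $t\in[0,C]$, so testing the equation with $u_n$ gives also $\|\nabla u_n\|_{L^2(\Omega)}\le C''$. I would attempt the $L^\infty$ bound by a Moser-type iteration, exploiting that $t\mapsto\sigma_n(t^{q_n}-t)$ has sub-power growth uniformly in $n$; alternatively, in the ground state case, by combining the uniform energy bound of the last step below with a blow-up analysis (a solution concentrating in $L^\infty$ would, rescaled around its maximum, converge to an entire bounded positive superharmonic solution of a limiting equation $-\Delta v=v^{q_\infty}-\mu v$, $\mu\ge0$, hence to a positive constant — impossible), possibly aided by a Pohozaev identity on the convex domain $\Omega$.

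Granting the bounds, along a subsequence $u_n\wto u$ in $W^{1,2}_0(\Omega)$, $u_n\to u$ in $L^p(\Omega)$ for all $p<2^*$ and a.e., with $0\le u\le C$. On $[0,C]$ the reactions $\sigma_n(t^{q_n}-t)$ are bounded uniformly in $n$ and converge pointwise to $t\log t^2$ (equal to $0$ at $t=0$), so $\sigma_n(u_n^{q_n}-u_n)\to u\log u^2$ in every $L^p(\Omega)$ by dominated convergence, and passing to the limit in the weak formulation shows that $u$ is a weak solution of $-\Delta u=u\log u^2$ in $\Omega$. To improve the convergence: on a compact $K\subset\Omega$ the $L^\infty$-bound on $\sigma_n(u_n^{q_n}-u_n)$ yields a $C^{1,\alpha}_{\rm loc}$-bound on $(u_n)$, hence $u_n\to u$ in $C^1_{\rm loc}(\Omega)$. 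Since $\Omega$ is convex, hence Lipschitz, and $-\Delta u_n$ is bounded in $L^\infty(\Omega)$ with zero boundary data, $(u_n)$ is uniformly Hölder on $\overline{\Omega}$, so $u_n\to u$ in $C^0(\overline{\Omega})$ and $u=0$ on $\partial\Omega$; combined with $\|u_n\|_{L^\infty}\ge1$, a maximum point of $u_n$ stays at positive distance from $\partial\Omega$, so $u\not\equiv0$, and then the zero set of $u$, being closed in $\Omega$ and also open (near a zero $u<1$, where $-\Delta u=u\log u^2\le0$, so $u$ is locally subharmonic with an interior minimum equal to $0$, whence $u\equiv0$ nearby), is empty: $u>0$ in $\Omega$. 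On a compact $K$ one now has $u_n\ge\delta_K>0$ for $n$ large and, on $[\delta_K,C]$, the maps $t\mapsto\sigma_n(t^{q_n}-t)$ are uniformly $C^1$ (their derivatives converge to $2(1+\log t)$), so $\sigma_n(u_n^{q_n}-u_n)$ is bounded in $C^{0,\alpha}(K)$ and Schauder theory upgrades the convergence to $C^2_{\rm loc}(\Omega)$; in particular $u\in C^2(\Omega)$ solves \eqref{LSeq}. Finally, testing the equations for $u_n$ and for $u$ against $u_n$ and $u$ gives $\|\nabla u_n\|_2^2=\int\sigma_n(u_n^{q_n}-u_n)u_n\to\int u^2\log u^2=\|\nabla u\|_2^2$, so $u_n\to u$ strongly in $W^{1,2}_0(\Omega)$.

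For the last assertion, let each $u_n$ be a ground state of \eqref{LEeq}, set $c_n:=I_n(u_n)$ where $I_n$ is the energy of \eqref{LEeq} with $\sigma=\sigma_n$, and let $c$ be the ground state level of the energy $J(v)=\tfrac12\|\nabla v\|_2^2-\tfrac12\int(v_+^2\log v_+^2-v_+^2)$ of \eqref{LSeq}. The primitives $\sigma_n\bigl(\tfrac{t^{q_n+1}}{q_n+1}-\tfrac{t^2}{2}\bigr)$ are bounded on $[0,C]$ uniformly in $n$ and converge pointwise to $t^2\log t-\tfrac{t^2}{2}$, so $I_n(u_n)\to J(u)$ by dominated convergence together with the strong $W^{1,2}_0$-convergence; since $u$ is a nontrivial solution of \eqref{LSeq}, $J(u)\ge c$. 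Conversely, if $w$ realises $c$, for each $n$ there is a unique $t_n>0$ with $t_nw$ on the Nehari manifold of $I_n$, whence $c_n\le I_n(t_nw)$; the Nehari identity for $I_n$ along $w$ passes to the limit (again by dominated convergence) into that for $J$ along $w$, whose unique positive root is $t=1$, so $t_n\to1$ and $I_n(t_nw)\to J(w)=c$. Hence $\limsup_n c_n\le c$, which forces $c_n\to c$ and $J(u)=c$: $u$ is a ground state of \eqref{LSeq}. The bound $c_n\le I_n(t_nw)\le C$ just produced is also what feeds the blow-up argument invoked for the $L^\infty$ estimate.
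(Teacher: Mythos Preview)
Your convergence argument (once the $L^\infty$ bound is granted) is correct and matches the paper's closely: the paper likewise uses the uniform Laplacian bound, $C^\alpha(\overline{\Omega})$ regularity on convex domains, local Schauder estimates for $C^2_{\rm loc}$, the strong maximum principle for positivity, and norm convergence for strong $W^{1,2}_0$ convergence. Your ground state argument via Nehari projection of a fixed limit ground state $w$ is \emph{different} from the paper's route---the paper instead uses an explicit upper bound on $c_{q,\sigma}$ (Lemma~\ref{lem_energ_estim}) and passes to the limit to obtain directly that $\|u\|_2^2\le\|\varphi\|_2^2$ for all $\varphi$ in (a superset of) the Nehari set of \eqref{LSeq}. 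Your approach is more direct and perfectly valid; note only that $t_n\to 1$ requires a first-order expansion of $\|w\|_{q_n+1}^{q_n+1}$ in $q_n-1$ (this is where $w\in\mathcal{N}^+$ is used), since merely $t_n^{q_n-1}\to 1$ does not suffice.

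The genuine gap is the $L^\infty$ bound, which you rightly flag as the main obstacle but do not fill. A Moser iteration is not promising here: the reaction $\sigma_n(u^{q_n}-u)$ has a coefficient $\sigma_n\to\infty$, and the uniformity you need is precisely what fails. Your blow-up sketch is in the right spirit but misses two essential points. First, the rescaling is delicate because \emph{both} $\sigma_n\to\infty$ and $q_n\to 1$; the paper (Lemma~\ref{lem_apriori_est}) first bounds $M_n^{q_n-1}$ via one blow-up, then chooses $\lambda_n^2\sigma_n(M_n^{q_n-1}-1)=1$ in a second blow-up to obtain the limiting equation $-\Delta v=v$ (not $-\Delta v=v^{q_\infty}-\mu v$ as you write). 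Second, and more seriously, you do not address what happens if the maximum point approaches $\partial\Omega$: the limit domain is then a convex epigraph, possibly non-smooth and non-coercive, and ruling out bounded positive solutions there requires a Liouville theorem of this type (Theorem~\ref{thm_main_liouville} in the paper), which is one of the paper's main technical contributions. The Pohozaev identity does not help here. Finally, note that the paper's $L^\infty$ bound holds for \emph{all} solutions, not only ground states, so restricting the blow-up to the ground state case (to feed in the energy bound) would not yield the full statement.
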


We highlight that Theorems \ref{thm_main_log} and \ref{thm_main_power} hold in a general convex bounded $\Omega$. 
As described in Section \ref{sec_comments}, this relies on the following Liouville theorem for general convex epigraphs. 
 
\begin{theorem}[Liouville theorem on convex epigraphs]
\label{thm_main_liouville}
Let $q\ge 1$ and $H 
\subseteq \R^N$ be an entire convex open epigraph. 
Then there exists $n\in \R^N\setminus\{0\}$ such that any bounded solution of
\beq
\label{Linp}
\begin{cases}
-\Delta u= u^q&\text{in $H$}\\
u>0&\text{in $H $}\\
u=0&\text{on $\partial H$}
\end{cases}
\eeq
 satisfies $\partial_n u>0$ in $H$. Moreover, if $q\le 2^*-1$, no such solution exists.
\end{theorem}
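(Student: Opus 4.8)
The plan is to combine a moving-plane / sliding type argument with a Liouville theorem for the whole-space and half-space problems. First I would set up the geometry: write $H=\{x\in\R^N : x_N > \gamma(x')\}$ for a convex function $\gamma\colon\R^{N-1}\to\R$ (allowing $\gamma\equiv -\infty$, i.e. $H=\R^N$, and the half-space case $\gamma$ affine). Since $\gamma$ is convex, it has at least one supporting hyperplane, and more importantly, for a suitable direction the epigraph is ``monotone'': there is a unit vector $n$ (essentially $n=e_N$ after rotating so the relevant supporting direction points up) such that $H+tn\subseteq H$ for all $t\ge 0$. This is the geometric input that makes the sliding method available. The monotonicity statement $\partial_n u>0$ is then proved by the sliding method of Berestycki--Nirenberg: for $t>0$ compare $u$ with its translate $u_t(x):=u(x-tn)$ on the overlap region $H\cap(H+tn)=H+tn$; since $u_t=0\le u$ on $\partial(H+tn)\cap H$ and $u$ is bounded while $u_t$ vanishes at the ``lower boundary'', a maximum principle on (possibly unbounded) domains—using the boundedness of $u$ together with positivity to run the argument, e.g. via the narrow-domain / sweeping technique near the boundary and an application of the strong maximum principle and Hopf lemma in the interior—yields $u_t\le u$ throughout, for every $t\ge0$. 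Differentiating in $t$ at $t=0^+$ gives $\partial_n u\ge 0$, and then the strong maximum principle applied to the linearized equation ($-\Delta(\partial_n u) = q\,u^{q-1}\partial_n u \ge 0$) upgrades this to $\partial_n u>0$ in $H$ (note $\partial_n u\not\equiv 0$ since $u>0$ in $H$ but $u=0$ on $\partial H$).

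\medskip

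For the nonexistence part when $1\le q\le 2^*-1$, I would argue by contradiction using the monotonicity just obtained together with a dimension-reduction to a lower-dimensional problem and a Liouville theorem. The monotonicity $\partial_n u>0$ implies that along each line in direction $n$ the function $u$ is increasing and bounded, hence has a limit $U(x')$; a standard translation-and-compactness argument (translating $u$ by $kn$, using elliptic estimates, and passing to the limit) produces a solution $U$ of $-\Delta U = U^q$ on an entire space of possibly lower dimension, or on a half-space $H_\infty$ which is again a convex epigraph but now invariant under translation by $n$. Iterating, one reduces either to the whole space $\R^m$ (for some $m\le N$) or to a half-space $\R^{m-1}\times\R_{>0}$. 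On $\R^m$, the Gidas--Spruck Liouville theorem for $-\Delta u=u^q$, $0<q\le (m+2)/(m-2)$, forbids positive bounded (indeed any positive) solutions; and since $q\le 2^*-1 = (N+2)/(N-2)\le (m+2)/(m-2)$ for $m\le N$, this applies. On the half-space, the half-space Liouville theorem (Gidas--Spruck / Dancer, or via a Kelvin transform and moving planes) similarly rules out positive solutions for $q\le 2^*-1$. Either way we reach a contradiction, so no bounded solution of \eqref{Linp} exists in this subcritical/critical range.

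\medskip

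The main obstacle I anticipate is making the sliding method rigorous on an \emph{unbounded} domain with a merely Lipschitz (convex, hence locally Lipschitz but possibly with unbounded gradient) boundary: the usual maximum principle requires either a sign condition at infinity or a narrowness assumption, and here one must exploit the boundedness of $u$ carefully—for instance by first establishing the comparison for large $t$ (where $H+tn$ is ``thin near its boundary'' relative to where $u$ is large, or by a barrier argument) and then decreasing $t$ continuously, checking at each stage that the contact set cannot touch. A secondary delicate point is the dimension-reduction step: one must ensure the limiting profile $U$ is nontrivial (not identically zero), which again uses a uniform-in-translation lower bound coming from the Hopf lemma applied at a fixed boundary point together with the monotonicity. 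A third subtlety is the precise choice of the direction $n$ when $\partial H$ has flat parts or corners—convexity guarantees that the set of admissible directions $n$ with $H+\R_{\ge0}n\subseteq H$ is a nonempty closed convex cone, and any interior direction of this cone works; I would pick one such $n$ explicitly and note that the conclusion $\partial_n u>0$ is stated for that particular $n$.
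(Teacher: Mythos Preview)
Your monotonicity step has a genuine gap. You propose the sliding method: compare $u$ with its translate $u_t(\cdot)=u(\cdot - tn)$ on $H+tn$ and show $u\ge u_t$ for all $t\ge 0$. But the starting step does not go through as stated: for large $t$ the domain $H+tn$ is still an entire epigraph of the same shape, not a narrow region, so the narrow-domain maximum principle is unavailable; and since $u$ is only assumed bounded (no decay at infinity), there is no evident way to initiate the comparison. The vague suggestion that $H+tn$ is ``thin near its boundary relative to where $u$ is large'' does not translate into a usable maximum principle.

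The paper instead uses the moving plane (reflection across $\{x_N=\lambda\}$), not sliding. For small $\lambda$ the reflected region $T_\lambda=\{g(x')<x_N<\lambda\}$ lies in a strip of width $\lambda$, so a maximum principle on thin strips starts the argument. The key new ingredient, which your proposal is missing, is a convex-analysis lemma (Lemma~\ref{lem_rotation}): any convex entire epigraph can, after a carefully chosen rotation, be written as the epigraph of a \emph{semicoercive} function---coercive in some variables $x'$ and translation-invariant in the complementary variables $x''$. This is strictly more than picking $n$ in the interior of the recession cone. Semicoercivity is what makes the continuation (openness) step work on a non-coercive epigraph: if the moving-plane inequality were about to fail at height $\lambda$, the would-be contact points have bounded $x'$-coordinates (coercivity plus $x_N<\lambda$), while the $x''$-coordinates can be translated away by invariance, yielding the compactness needed to reach a contradiction. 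Without this structure, the openness step on an epigraph like $\{x_3>|x_1|\}\subset\R^3$ is unclear in either a sliding or a moving-plane scheme.

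Your nonexistence argument is essentially the paper's: pass to the monotone limit as $x_N\to+\infty$ to obtain a positive bounded solution on $\R^{N-1}$, then invoke a Liouville theorem there. For $q\le 2^*-1$ your appeal to Gidas--Spruck on $\R^{N-1}$ suffices, since $2^*-1$ in dimension $N$ is subcritical in dimension $N-1$; the paper instead cites Farina's result on stable solutions, which gives a larger range of $q$ but is not needed for the statement as written.
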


The dispersive version of problem \eqref{LSeq} (i.\,e,\, with opposite sign on the right hand side) and of \eqref{LEeq} (namely the Allen-Cahn equation) is easier, since it can be treated by the general theory of \cite{BMS22, MRS24}. 
We indeed have the following result.

\begin{theorem}
\label{thm_diff_sign}
Let $\Omega$ be bounded and convex. The following two facts hold.

$\bullet$ The problem
\begin{equation}\label{eq_pol_changed}
\begin{cases}
-\Delta u=\sigma\, (u-u^q)& \text{in $\Omega$}\\
u>0&\text{in $\Omega$}\\
u=0&\text{on $\partial\Omega$}
\end{cases}
\end{equation}
for $\sigma>0$ and $q>1$ has a unique solution $u$. Such solution verifies $\|u\|_\infty <1$ and the function 
\beq
\label{varphi1}
\varphi_1(u) :=\operatorname{atanh}\left(\sqrt{1-\frac{2}{q+1} \, u^{q-1}}\right)
\eeq 
is locally strongly convex. 

$\bullet$ 
The problem
\begin{equation}\label{eq_sign_changed}
\begin{cases}
-\Delta u=-u\, \log u^2& \text{in $\Omega$}\\
u>0&\text{in $\Omega$}\\
u=0&\text{on $\partial\Omega$}
\end{cases}
\end{equation}
has a unique solution $u$. Such solution verifies $\|u\|_\infty < 1$ and the function 
\beq
\label{varphi2}
 \varphi_2(u) :=\sqrt{1-\log u^2}
\eeq is locally strongly convex. 

In particular, the mentioned solutions of \eqref{eq_pol_changed} and \eqref{eq_sign_changed} are strictly quasi-concave.
\end{theorem}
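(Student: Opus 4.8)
\emph{Strategy and the elementary part.} The plan is to deduce both parts of Theorem~\ref{thm_diff_sign} from the general $\varphi$-concavity theory of \cite{BMS22, MRS24}, applied via the canonical transformation \eqref{eq_transf_BMS}, with only existence, uniqueness and the a priori bound $\|u\|_\infty<1$ handled directly. Write $f_1(t)=\sigma(t-t^q)$, $f_2(t)=-t\log t^2$ and $F_i(t)=\int_0^t f_i$, so that
\[
F_1(t)=\frac{\sigma}{2}\,t^2\big(1-\tfrac{2}{q+1}\,t^{q-1}\big),\qquad F_2(t)=\frac{1}{2}\,t^2\big(1-\log t^2\big).
\]
In both cases $t\mapsto f_i(t)/t$ is strictly decreasing on $]0,+\infty[$ — it equals $\sigma(1-t^{q-1})$, resp.\ $-\log t^2$ — so the positive solution is unique by a Brezis--Oswald type argument, and existence follows from the method of ordered sub/supersolutions, the constant $1$ being a supersolution since $f_i(1)=0$ and $\eps\,\phi_1$ (a small multiple of the first Dirichlet eigenfunction) being a subsolution below $1$: for every small $\eps$ in case \eqref{eq_sign_changed}, since $-\log(\eps\phi_1)^2\to+\infty$, and for $\sigma>\lambda_1(\Omega)$ in case \eqref{eq_pol_changed} (the natural range, automatic in the regime $\sigma=2/(q-1)$, $q\to1^+$, used elsewhere in the paper). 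Finally, $\|u\|_\infty<1$: at an interior maximum $x_0$ of $u$ one has $-\Delta u(x_0)\ge0$, hence $f_i(u(x_0))\ge0$, which forces $u(x_0)\le1$; and if $u(x_0)=1$, then $w:=1-u\ge0$ satisfies, near $x_0$, a linear equation $\Delta w-c(x)\,w=0$ where $c:=f_i(u)/(1-u)$ extends to a nonnegative, locally bounded function (indeed $f_1(u)/(1-u)\to\sigma(q-1)$ and $-2u\log u/(1-u)\to2$ as $u\to1$), so the strong maximum principle gives $w\equiv0$ near $x_0$, hence $u\equiv1$ in $\Omega$, contradicting $u=0$ on $\partial\Omega$.

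\emph{The transformation and quasi-concavity.} Since $0<u<1$ in $\Omega$ and, by the formulas above, $F_i>0$ on $]0,1[$ (using $(\tfrac{q+1}{2})^{1/(q-1)}>1$ and $\sqrt{e}>1$), the function $\varphi(t)=\int_1^t F_i(\tau)^{-1/2}\,d\tau$ of \eqref{eq_transf_BMS} is well defined, smooth and strictly increasing on $]0,1[$, with $\lim_{t\to0^+}\varphi(t)=-\infty$. An elementary integration — substituting $w=\sqrt{1-\tfrac{2}{q+1}t^{q-1}}$ in the first case, $s=\log t^2$ in the second — gives $\varphi=a_1-b_1\,\textup{atanh}(\sqrt{1-\tfrac{2}{q+1}t^{q-1}})$ and $\varphi=a_2-b_2\sqrt{1-\log t^2}$ for suitable constants $a_i\in\R$ and $b_i>0$; that is, the functions $\varphi_1,\varphi_2$ of \eqref{varphi1}--\eqref{varphi2} are strictly \emph{decreasing} affine reparametrisations of the corresponding $\varphi$. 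Hence the conclusion of the general theory — local strong concavity of $\varphi(u)$ in $\Omega$ — is equivalent to local strong convexity of $\varphi_1(u)$, resp.\ $\varphi_2(u)$. Quasi-concavity then follows immediately: as $\varphi_i$ is strictly decreasing on the range of $u$, for each $t\in[0,\|u\|_\infty)$ one has $\{u>t\}=\{\varphi_i(u)<\varphi_i(t)\}$, a sublevel set of a strongly convex function and hence strictly convex.

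\emph{Invoking \cite{BMS22, MRS24}, and the main difficulty.} It remains to verify the structural hypothesis of the general theory on the interval $]0,1[$, which reduces to the concavity there of $\sqrt{F_i}$: a direct computation, since $(\sqrt{F_2})'(t)=(-\sqrt{2}\,\log t)/\sqrt{1-\log t^2}$ is decreasing on $]0,1[$, and $(\sqrt{F_1})'$ is the difference of a decreasing and an increasing function on $]0,1[$, hence decreasing. (This is exactly where the defocusing sign enters: for the focusing reactions $u^q-u$ and $u\log u^2$ the primitive $F$ is negative near $0$, so \eqref{eq_transf_BMS} is meaningless, whereas for $u^q$ one has $F>0$ but $\sqrt{F}$ convex rather than concave.) Granting this, \cite{BMS22, MRS24} together with the convexity of $\Omega$ yields $D^2(\varphi(u))<0$, hence $D^2(\varphi_i(u))>0$, in $\Omega$, which is the assertion. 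I do not expect a conceptual obstacle — this is the case the authors single out as the easy one — the points to watch being the two explicit integrations above and the need for \cite{BMS22, MRS24} to be stated generally enough to accommodate a reaction $f_2$ that is merely continuous (not Lipschitz) at $t=0$ and a transformed solution $\varphi_i(u)$ that blows up to $+\infty$ along $\partial\Omega$ (since $u\to0$ and $\varphi_i(0^+)=+\infty$ there). The former is harmless, as $u$ is positive and locally smooth in $\Omega$; the latter is in fact the mechanism that drives the underlying concavity maximum principle (the relevant concavity function tends to $-\infty$ near $\partial\Omega$, so its supremum is attained in the interior), and it already occurs for the benign reaction with $F_1(t)\sim\tfrac{\sigma}{2}t^2$ near $0$, so it is certainly within the scope of the cited results.
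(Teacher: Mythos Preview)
Your approach is the paper's approach: reduce to the \cite{BMS22, MRS24} framework with the canonical transformation \eqref{eq_transf_BMS}, handle existence/uniqueness and the bound $\|u\|_\infty<1$ separately, and compute $\varphi_1,\varphi_2$ explicitly. The elementary part is fine (and your observation that \eqref{eq_pol_changed} admits a positive solution only when $\sigma>\lambda_1(\Omega)$ is correct --- testing the equation with the first eigenfunction shows this --- so the paper's statement is slightly imprecise on this point).

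The gap is in your last paragraph. You claim that \cite{BMS22, MRS24} deliver $D^2(\varphi(u))<0$, i.e.\ \emph{local strong} convexity, directly. They do not: those results give convexity of $\varphi(u)$ in a general bounded convex $\Omega$, but the strict inequality on the Hessian is precisely the refinement this paper supplies via Section~\ref{sec_CRT} (specifically Corollary~\ref{corstrongconv}, which combines the constant-rank theorems of \cite{KoLe87} and \cite{BGMX11} with Theorem~\ref{prop_lemmalappos} to ensure $\Delta w>0$ everywhere and then full rank of $D^2w$). To invoke Corollary~\ref{corstrongconv} you need one structural hypothesis you did not check: the convexity of $t\mapsto F_i(t)/f_i(t)$ on $]0,1[$, which --- as the paper recalls from \cite[p.~95]{BMS22} --- implies $\big(\partial_t^2(1/b)\big)(w,Dw)\ge0$ for the transformed right-hand side $b$, i.e.\ the harmonic-concavity condition in Corollary~\ref{corstrongconv}. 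So the division of labour is: concavity of $\sqrt{F_i}$ gives convexity of $\varphi(u)$ via \cite{BMS22}; convexity of $F_i/f_i$ plus Corollary~\ref{corstrongconv} upgrades this to the local strong convexity asserted in Theorem~\ref{thm_diff_sign}. Add that second check and the appeal to Corollary~\ref{corstrongconv}, and your argument is complete.
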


Note that since $\|u\|_\infty < 1$ the previous transformations $\varphi_1(u)$ and $\varphi_2(u)$, are well defined. Moreover, the functions $\varphi_i$ are both convex transformations. 
Note that, as a corollary of all the previous results, the solutions found in Theorems \ref{thm_main_log}, \ref{thm_main_power} and \ref{thm_diff_sign} actually have a unique critical point (which is nondegenerate) and their positive super-level sets are strongly convex.

\subsection{Comments on the results}
\label{sec_comments}

We list now some remarks on the previous theorems, as well as related literature and open problems.

\begin{remark}
\label{rem_Th_main_log}
On Theorem \ref{thm_main_log}. 
\begin{itemize}[leftmargin=*]
\item
Theorem \ref{thm_main_log} holds for the more general problem
\begin{equation}\label{eq_rescaled_gen}
\begin{cases}
-\Delta u=a\, u\, \log u^2 + b\, u& \text{in $\Omega$}\\
u>0&\text{in $\Omega$}\\
u=0&\text{on $\partial\Omega$},
\end{cases}
\end{equation}
for $a>0$, $b\in \R$. It suffices to consider $ k\, u(\lambda\, x)$ instead of $u$ for suitable $k, \lambda>0$. 
\item
Theorem \ref{thm_main_log} gives in particular the existence of a positive solution. 
This can be much more easily proved by standard variational methods, providing a ground state solution. 
Being the reaction $u \log u^2$ convex, by \cite{KaSi01} any solution to \eqref{LSeq} must be unstable. 
In \cite{CaCh98} it is conjectured that every stable solution of equations with $f\geq 0$ must be quasi-concave; here we provide an example of quasi-concave solution which is not stable (but $f$ fails to be non-negative).
\item
When $\Omega$ is bounded and convex, it is not known whether \eqref{LSeq} has a unique solution, or even a unique {\em ground state} solution (i.e.$\;$a solution to \eqref{LSeq} of minimal energy).
Note that we are not able to prove that the obtained $\log$-concave solution is a ground state. 
This would be the case if the ground state of the auxiliary Lane-Emden problem \eqref{LEeq} is unique for all sufficiently small $q>1$ {\em independently of $\sigma$}. 
See Remark \ref{rem_th_power} for this uniqueness issue. 
\item
Theorem \ref{thm_main_log} does not easily follow from Korevaar's convexity function method introduced in \cite{Kor83} and extended in \cite{Ken85, BMS22}. If $u$ solves \eqref{LSeq}, then $v=\log u$ solves 
$$-\Delta v= |\nabla v|^2 + 2\, v \quad \hbox{in $\Omega$}$$
and the monotonicity with respect to $v$ of the right hand side is opposite to the one required to apply the convexity function technique. 
\item
 In the entire case $\Omega=\R^N$ of problem \eqref{LSeq}, infinitely many radial solutions are found in \cite{DMS14}, where however it is also proved that there is a unique radial positive solution vanishing at infinity, which is a non-degenerate ground state, and is usually called the {\em Gausson}:
\[
u(x)=e^{\frac{N}{2}}\, e^{-\frac{|x|^2}{2}}.
\]
Clearly the Gausson is $\log$-concave on $\R^N$.
\item
In Section \ref{sec6} we will discuss the optimality of Theorem \ref{thm_main_log}, exhibiting for any given $\alpha\in \ ]0, 1/N]$ a convex domain $\Omega\subseteq \R^N$ and a solution of \eqref{LSeq} which is $\alpha$-concave but not $\beta$-concave for any $\beta>\alpha$. 
Moreover, in one dimension problem \eqref{LSeq} has a unique solution, so that $\log$-concavity is the strongest power concavity which can be expected from solutions to \eqref{LSeq} in an arbitrary convex domain $\Omega$. 
\item
Other concavity properties for solutions of \eqref{LSeq}, beyond power ones, can be considered. If $u$ is the Gausson, for example, then
\beq
\label{sqlog}
\varphi(u):=-\sqrt{-\log (u/\|u\|_\infty)}=-|x|/\sqrt{2}
\eeq
is concave. This concavity property is not artificial and is related to the so called $1/2$-logconcavity introduced in \cite{IST20}, where it is proved that $1/2$-logconcavity is the strongest scale invariant concavity preserved by the Dirichlet heat flow (see also \cite[Section 4.2]{IST22} and \cite{IST24} for optimal, scale dependant, concavity preserved by the Dirichlet heat flow). 
Since $1/2$-logconcavity is strictly stronger than $\log$-concavity, proving $1/2$-logconcavity of a solution of \eqref{LSeq} would improve Theorem \ref{thm_main_log} (see Figure \ref{fig_sqrtlog_sphere} for some numerical computation in the ball). 
In Section \ref{sec6} we will see that, at least in the one-dimensional case, solutions of \eqref{LSeq} are indeed $1/2$-logconcave, since they are actually $\varphi$-concave for $\varphi$ given in \eqref{sqlog}.
\end{itemize}
\end{remark}
 
 \begin{remark} \label{rem_th_power}
On Theorem \ref{thm_main_power}.
 \begin{itemize}[leftmargin=*]
 \item
 We do not know whether the solutions constructed in the theorem are ground states. 
This can be shown to be true -- up to small modifications of the proof -- if \eqref{LEeq} has a unique ground state. 
 Such uniqueness has been proved by \cite{Lin94} and \cite[Section 4]{BrFr20} for the related problem \eqref{Linp}
 when $\Omega$ is a bounded convex body in the plane. Under additional symmetry assumptions on $\Omega$, \cite[Theorem 4.1]{DGP99} shows that actually \eqref{Linp} always has a unique solution (which is therefore a ground state). 
 Regarding \eqref{LEeq}, very few results are known for specific domains $\Omega$ or specific values of $q$, see \cite{MPPR09} and references therein. 
In this paper, following \cite{DGP99} we will prove in Proposition \ref{corol_uniq} that it has a unique solution for $q$ sufficiently close to $1$ when $\Omega$ is a general convex set, but how small $q-1$ must be {\em a-priori} depends on $\sigma$ and $\Omega$.
For fixed $q>1$, uniqueness for solutions of \eqref{LEeq} has also been achieved in \cite{Dan95} when $\Omega$ possesses $N$ orthogonal symmetries and $\sigma$ is sufficiently large. Still, the size of $\sigma$ is not given in a quantitative way. 
\item
For a fixed $\sigma>0$, strictly $(1-q)/2$-concave ground states of \eqref{LEeq} for all $q\in\ ]1, 2^*-1[$ can be constructed through the method we adopt whenever the multifunction mapping 
 \[
 ]1, 2^*-1[\ \ni q \mapsto \Phi(q):=\{\text{Ground states of \eqref{LEeq}}\}\subseteq W^{1,2}_0(\Omega)
 \]
 (which has compact values and locally compact graph) can be proved to be an approximate lower-semicontinuous multifunction. 
This property would trivially hold true whenever $\Phi$ is single valued, i.e.$\;$if the previously discussed uniqueness of ground states of \eqref{LEeq} holds true. Alternatively, one may require that the graph of $\Phi$ is connected. 
As discussed in Remark \ref{rem_Th_main_log}, the validity of any of these statements for $q\in\ ]1, \bar q]$ with $\bar q>1$ independent of $\sigma$ would yield a $\log$-concave ground state of \eqref{LSeq} as well. 

We were not able to prove these properties and resort to a connected subset of $\,]1, 2^*-1[\ \times W^{1,2}_0(\Omega)$ made of general solutions of \eqref{LEeq}, rather than of ground states. 
The latter is obtained through degree methods, see Lemma \ref{lemmaconn}, and all these solutions have local degree $-1$. 
 \item
 In \cite{Lin94} it is shown that solutions to \eqref{Linp} in strongly convex bounded domains of $\R^2$ are $(1-q)/2$ concave.
We thus see that solutions of \eqref{LEeq} enjoy the same concavity properties of \eqref{Linp}, suggesting that a \emph{negative perturbation}, in some way, does not affect the concavity properties of the equation. 
When dealing with sums (see e.g.$\,$\cite[Corollary 6.6]{GaSq25}), we know that the biggest exponent dictates the right transformation, which is coherent with the fact that here $q>1$.
Similarly, equation \eqref{eq_rescaled_gen} 
with $b=\lambda_1$ 
$$-\Delta u = \lambda_1 u + a \, u \log 
u^2$$
 enjoys the same concavity properties of the eigenfunction equation, which is recovered by sending $a \to 0$; again, $u \log u^2$ is negative near the origin. 
We refer to \cite{HaNa25} for some $\log$-concavity results on small negative sublinear perturbations of the eigenfunction problem. 
 \item 
In \cite{Gom07} the author shows the existence of a minimiser of the energy functional corresponding to \eqref{LEeq}, constrained on the subspace of quasi-concave functions. 
Unfortunately this would not immediately imply that such a minimiser is a solution of the equation (i.e.$\;$ we do not know if this subspace is a natural constraint). 
A similar approach, with the same obstacle, could also be pursued for \eqref{LSeq}.
\item
Finally, it is worth underlining that in both Theorems \ref{thm_main_log} and \ref{thm_main_power} we obtain local \emph{strong} concavity without any regularity or strict convexity assumption on $\Omega$, just its convexity and boundedness. 
In particular, $\Omega$ may have corners and flat parts, but still the positive super-level sets of the solutions of \eqref{LSeq} and \eqref{LEeq} are strictly convex.
This is obtained through the constant rank theorem \cite{KoLe87} coupled with a non-trivial argument based on its level-set counterpart proved in \cite{BGMX11}, which has some interest by itself. 
See Section \ref{sec_CRT} for further details. 
\end{itemize}
\end{remark}

\begin{remark}\label{rem_Th_convergence} 
On Theorem \ref{thm_convergence}. 
\begin{itemize}[leftmargin=*]
\item
Theorem \ref{thm_convergence} holds true for generic solutions of \eqref{LEeq}. 
This is actually needed in the proof of Theorem \ref{thm_main_log} since, as already noted in the previous remarks, the lack of a uniqueness result for ground states of \eqref{LEeq} forces us to approximate \eqref{LSeq} with generic solutions rather than ground states. 
The convergence of radial ground states of \eqref{LEeq} in $\R^N$ to ground states of \eqref{LSeq} has already been investigated in \cite{WaZh19}, see also \cite{Car22}. 
\item
The proof of Theorem \ref{thm_convergence} is based, not surprisingly, on rather delicate a-priori estimates obtained in Lemma \ref{lem_apriori_est} below. 
For Theorems \ref{thm_main_log} and \ref{thm_main_power} to hold in general, not necessarily smooth, bounded convex sets, the a-priori bounds on solutions of \eqref{LEeq} must be independent of any smoothness assumption on $\partial\Omega$ (compare e.g.$\,$with \cite{DLN82}, where a-priori bounds depend on the $C^2$ regularity of $\partial\Omega$) and this is a source of major technical problems. 
To prove the a-priori bound, we will employ a contradiction argument and blow-up procedure in the spirit of \cite{GiSp81}, coupled with the Liouville Theorem \ref{thm_main_liouville}. 

\item
A related convergence result for \eqref{LEeq} is contained in \cite[Theorem 1]{Dan95}, where the author proves that, if $\Omega$ is suitably symmetric and $\sigma = 1/\eps\to +\infty$ then solutions $u_{\eps}$ of the singularly perturbed equation
\begin{equation}\label{eq_semiclass}
-\eps \,\Delta u = u^q - u \quad \hbox{in $\Omega$}
\end{equation}
 converge to a solution in the entire space, or more precisely $\|u_{\eps} - v(\eps^{-1/2} \cdot)\|_{L^{\infty}(\Omega)} \to 0$, where
 $$-\Delta v = v^q - v \quad \hbox{in $\R^N$.}$$
We highlight that, if $q$ is fixed and $\sigma \to +\infty$, equation \eqref{LEeq} resembles the semiclassical limit in \eqref{eq_semiclass}.
 \end{itemize}
 \end{remark}
 
 \begin{remark}
On Theorem \ref{thm_main_liouville}.
\begin{itemize}[leftmargin=*]
\item
The main novelty of Theorem \ref{thm_main_liouville} lies in the fact that the function whose epigraph is $H$ may fail to be coercive. 
The coercive case dates back to \cite{EsLi82} and for more recent results on this kind of Liouville problems, we refer to \cite[Part I]{QuSo19}, \cite{DFP23} and the literature therein. 
\item
A typical example, arising as limiting problem through blow-up of solutions to \eqref{LSeq} in non-smooth convex domains, is when $H=\{(x_1, x_2, x_3)\in \R^3: x_3\ge |x_1|\}$. 
In this case $H$ is a non-smooth convex cone and there is no direction in which it can be described as a coercive epigraph. 
\item
We will actually prove the non-existence statement for a larger set of exponents, given in \eqref{pcritfarina} and described for the first time in \cite{Far07} in the study of entire stable solutions of the Lane-Emden equation. 
We chose to state the result for $q\le 2^*-1$ since this is the range of exponents relevant to our framework. 
To the authors' knowledge, nonexistence of bounded solutions to \eqref{Linp} in a general convex epigraph $H$ for arbitrary $N\ge 1$ and $q\ge 1$ is open.
\item
The main point in the proof of Theorem \ref{thm_main_liouville} is a convex analysis result deduced in Lemma \ref{lem_rotation}, which has some interest by itself. It ensures that, after a suitable rotation, any convex entire epigraph can be described as the epigraph of a semicoercive function. 
See Appendix \ref{sec_lipschit_epi} for details.
\end{itemize}
\end{remark}

\begin{remark}
On Theorem \ref{thm_diff_sign}. 
 \label{rem_Lions1981}
 \begin{itemize}[leftmargin=*]
 \item
Equations \eqref{eq_pol_changed} and \eqref{eq_sign_changed} have been treated in \cite{Lio81} (see Remark 4 therein), where $\log$-concavity is proved by means of a parabolic approach. 
Our result is stronger since, setting $\varphi_1$ as in \eqref{varphi1} we obtain for any function $u$ with $0<u<1$
$$\hbox{$\varphi_1(u)$ convex $\implies$ $\log u$ concave $\implies$ $u^{(1-q)/2}$ convex}$$
 and the opposite implications do not hold in general.
Similarly, for $\varphi_2$ as in \eqref{varphi2} it holds
$$\hbox{$\varphi_2(u)$ convex $\implies$ $\log u$ concave}$$ 
and not {\em vice-versa} in general.
\item
Comparing with Theorems \ref{thm_main_log} and \ref{thm_main_power}, we see that the opposite sign in, respectively, \eqref{eq_sign_changed} and \eqref{eq_pol_changed}, grants stronger concavity properties for the solution, as expected. 
Moreover, uniqueness is restored and the constructed solutions are actually global minimisers of the corresponding free energy functional.
\end{itemize}
\end{remark}

 \begin{remark}[The case $q<1$] 
Approximating \eqref{LSeq} by \eqref{LEeq} as $q\to 1^+$ is a natural choice for several reasons: first, both the equations are superlinear. 
Moreover, the study of \eqref{LEeq} for $q>1$ has its own interest, due to the classical literature on the topic discussed at the beginning of the Introduction.
 
The question on whether using the approximation \eqref{heu} for $q\to 1^-$ could yield easier proofs or better results, on the other hand, arises naturally. 
Unfortunately, it does not seem so. 
The corresponding functional for $q<1$ is still not coercive, since the linear term has an arbitrarily large coefficient. 
Moreover, for the corresponding reaction $f(u)=2\, (u^q-u)/(q-1)$, the function $t\mapsto f(t)/t $ is actually increasing, thus the Brezis-Oswald uniqueness \cite{BrOs86} result does not apply. 
Finally, the convexity function technique of \cite{Kor83, Ken85} still cannot be used: one is naturally led to consider the convexity of $v=-u^{(1-q)/2}$, but the transformed equation for $v$ has the form
 $$-\Delta v = \frac{1-q}{2}\, v -\frac{1}{v}\, \left(\frac{q+1}{1-q}\, |\nabla v|^2 + \frac{1-q}{2}\right)$$
whose right-hand side is increasing in $v$, thus having the opposite monotonicity than required. 
Finally, and more substantially, the strong maximum principle fails for solutions of \eqref{LEeq} (with $\sigma=2/(q-1)<0$) when $q<1$, allowing for non-negative solutions with dead cores, and even proving the existence of a positive solution to \eqref{LEeq} is far from trivial.
 \end{remark}

\subsection{Structure of the paper and sketch of the proofs}
 
In Section \ref{sec_CRT} we discuss conditions ensuring the applicability of the constant rank theorem of \cite{KoLe87} to suitable transformations $\varphi(u)$ for $u$ being a positive solution of $-\Delta u=f(u)$ with Dirichlet conditions. 

\smallskip

Section \ref{sec_LE} is devoted to {\em a-priori} estimates for solutions of the Lane-Emden equation \eqref{LEeq} in the subcritical case. 
In order to use them for varying domains and for $q\to 1^+$, considerable care is devoted to obtain estimates depending only on basic geometric quantities of the domain (in particular, independent of the smoothness and the curvature of $\partial\Omega$) and lower bounds on the parameter $\sigma>0$.

\smallskip

In Section \ref{sec_ALE} we apply these bounds to study the asymptotic behaviour of solutions of \eqref{LEeq} in two regimes: 
\begin{itemize}
\item
when $\sigma>0$ is fixed and $q\to 1^+$, obtaining convergence after normalisation to the first eigenfunction of the Dirichlet Laplacian;
\item
when $\sigma=2/(q-1)$ and $q\to 1^+$, proving convergence up to subsequences to a solution of the Logarithmic Schr\"odinger equation \eqref{LSeq}.
\end{itemize}
Moreover, we construct via degree arguments a connected branch of solutions $u_q$ for $q\in \ ]1, 2^*-1[$ of \eqref{LEeq}.

\smallskip

Section \ref{sec_Conc} is devoted to the proof of Theorems \ref{thm_main_log}, \ref{thm_main_power} and \ref{thm_diff_sign}. 
Since the proof of the dispersive case Theorem \ref{thm_diff_sign} is essentially a refinement of the strategy in \cite{BMS22, MRS24} through the results in Section \ref{sec_CRT}, we briefly describe now the path to Theorem \ref{thm_main_log} and \ref{thm_main_power}. 

We will first prove that in a strongly convex $\Omega$ a continuity argument ensures $(1-q)/2$-concavity for the solutions of \eqref{LEeq} constructed in Section \ref{sec_ALE}. 
To this end, two key points are: 
\begin{itemize}
\item
the connectedness of the branch of solutions, in order to set up the continuity argument directly on the branch;
\item
the $\log$-concavity of such solutions for $q$ near $1$, ensured by the aforementioned convergence to the first eigenfunction of the Dirichlet Laplacian and the $\log$-concavity of the latter (note that $\log$-concavity implies $(1-q)/2$-concavity).
\end{itemize}
Then the results of Section \ref{sec_CRT} are used to prove Theorem \ref{thm_main_power} in the smooth strongly convex setting. 
To remove this assumption we approximate a general convex $\Omega$ with strongly convex ones. 
Due to the robustness of the a-priori estimates of Section \ref{sec_LE}, we can pass to the limit to obtain a $(1-q)/2$-concave solution of \eqref{LEeq} in any convex $\Omega$. 
Section \ref{sec_CRT} allows again to deduce strong $(1-q)/2$ concavity, since $q>1$.

Finally, Theorem \ref{thm_main_log} is obtained through Theorem \ref{thm_main_power} by passing to the limit as $q\to 1^+$ and $\sigma=2/(q-1)$ thanks to the asymptotic behaviour proved in Section \ref{sec_ALE}. 

\smallskip

Section \ref{sec6} gathers some results on solutions of the Logarithmic Schr\"odinger equation. 
We will prove in Corollary \ref{corlinfb} a universal upper bound on solutions of \eqref{LSeq} in a general convex domain $\Omega$, depending only on geometric bounds on $\Omega$; then we will consider radiality of solutions in the ball and describe some elementary computations in the one-dimensional case, in order to investigate the sharpness of Theorem \ref{thm_main_log}. 
We will in particular rule out $\alpha$-concavity for solutions of \eqref{LSeq} in Theorem \ref{opt_thm}, for any $\alpha>0$. 

\smallskip

Two appendices conclude the manuscript. In Appendix \ref{sec_lipschit_epi} we gather some results from convex analysis and in Appendix \ref{sec_liouville} we prove Theorem \ref{thm_main_liouville}.

\medskip

 {\bf Acknowledgments.}
 S.$\,$M. would like to thank Fabio Zanolin for suggesting the proof of Lemma \ref{lem_alphab}. 
M.$\,$G. thanks Riccardo Moraschi and Francesco Ballarin for some help in numerical simulations. 
The authors wish to thank the anonymous referee for the fruitful comments.
 
 \medskip

{\bf Notations.}\
We will say that an open set $\Omega\subseteq\R^N$ is {\em smooth} if $\partial\Omega$ is locally the graph of a $C^{2,\alpha}$ function, $\alpha\in \ ]0, 1[$.
We further set $\R_+:=\ ]0,+\infty[$ and denote by $(v, w)$ the usual Euclidean scalar product between $v, w\in \R^N$ and $|v|$ denotes the corresponding Euclidean norm. 
If $N>2$ we set $2^*=2\, N/(N-2)$ and if $N\le 2$, $2^*=\infty$. 
For a $N\times N$ symmetric real matrix $M$, we will write $M\ge 0$ meaning that $M$ is non-negative definite, $M>0$ meaning that $M$ is positive definite.
By a \emph{locally strongly concave} function $u\in C^2(\Omega)$ we mean that the inequality $D^2 u <0$ holds in $\Omega$. If $E\subseteq \R^N$ is measurable, $|E|$ stands for its Lebesgue measure. 
Given a Lebesgue measurable function $u:\Omega\to \R^k$ and $p\in [1, \infty]$, $\|u\|_p$ will denote the usual $L^p(\Omega)$ norm of $|u|$, whenever omitting $\Omega$ causes no confusion.

\section{Preliminaries on the constant rank theorem}
 \label{sec_CRT}

 The celebrated constant rank theorem by Caffarelli-Friedman \cite{CaFr85} and Korevaar-Lewis \cite{KoLe87} states that if $w$ is a convex solution of 
\beq
 \label{KLeq}
 \Delta w = b(w, Dw)>0
\eeq
 in a connected domain $\Omega$ and $t\mapsto 1/b(t, z)$ is convex for any $z\in \R^N$, then $D^2w$ has constant rank. 
Its proof shows that the {\em strict} positivity of $b(w, Dw)$ is essential in \cite{KoLe87}, as well as in the fully nonlinear counterpart \cite{BiGu09}. In fact, the function $b(t, z)$ {\em per se} may change sign, but it is sufficient for the constant rank theorem to hold that $b(w, Dw)>0$ and that the function
 \[
 b_{tt}(w, Dw)- 2\, \frac{b_t^2(w, Dw)}{b(w, Dw)}
 \]
is locally bounded in $\Omega$ and non-positive there. 
This indeed amounts to a global positivity coupled with a local convexity condition, expressed as 
\beq
\label{hypKL}
b(w, Dw)>0\quad \text{and} \quad \big(\partial^2_t (1/b)\big)(w, Dw)\ge 0 \quad \text{in $\Omega$}.
\eeq
 Note that, from the convexity of $w$ coupled with the equation $\Delta w=b(w, Dw)$, one can only derive the weaker inequality $b(w, Dw)\ge 0$ in $\Omega$. 

We further mention that, in general, the (constant) rank of the Hessian need not be full: indeed, in \cite[Section 5]{KoLe87} the authors construct a convex solution of \eqref{KLeq} in a connected domain $\Omega\subseteq \R^N$ with $b(t, z)>0$, $t\mapsto 1/b(t, z)$ convex but such that the rank of $D^2w$ is constantly equal to $k<N$. 
In such an example, however, the minimum of the solution is always attained on the boundary (compare this with the assumption of Proposition \ref{lem_basener}).
 
In this section we describe some conditions ensuring that:
\begin{enumerate}
\item \label{goal_i}
a convex classical solution of $\Delta w=b(w, Dw)$ actually satisfies $b(w, Dw)>0$ everywhere;
\item \label{goal_ii}
if \eqref{hypKL} holds true, then the rank of $D^2w$ is actually full everywhere and $w$ is strongly convex. 
\end{enumerate}

In practical situations, the constant rank theorem is applied to the function $w=\varphi(u)$, with $u$ being a solution of 
\beq
\label{gensemilinear}
\begin{cases}
 -\Delta u=f(u)&\text{in $\Omega$}\\
 u>0&\text{in $\Omega$}\\
 u=0&\text{on $\partial\Omega$},
 \end{cases}
 \eeq
and a straightforward computation shows that
\beq
\label{deltaw}
\Delta w = -\frac{\psi''(w)\, |Dw|^2 +f(\psi(w))}{\psi'(w)} =:b(w,Dw),
\eeq
where $\psi:=\varphi^{-1}$. In this setting, the two questions outlined above are linked by a level-set version of the constant rank theorem proved in \cite{Kor90} (see \cite{BGMX11} for the fully nonlinear version). 

 \smallskip

To state it we recall some elementary facts on the second fundamental form of level sets of a $v\in C^2(A)$, where $A$ is an open subset of $\R^N$ and $N\ge 2$. 
Let $t \in v(A)$ and $x_0 \in \{v=t\}$; in order for the level set $\{v=t\}$ to be a $C^2$ submanifold near $x_0$ we will suppose that $Dv(x_0)\ne 0$. 
We choose a normal vector to $\{v=t\}$ at $x_0$ as
\beq
\label{normale}
 {\bf n}_{x_0} :=-\frac{Dv(x_0)}{|Dv(x_0)|}
 \eeq
 and set the tangent space $T_{x_0}:=\{z\in \R^N:\langle {\bf n}_{x_0}, z\rangle=0\}$. 
The \emph{second fundamental form} of the level set of $v$ at $x_0$ is the quadratic form defined on $T_{x_0}$ as
\[
{\rm II}_{x_0}(v)(z):=\langle {\bf n}_{x_0}, \ddot{\gamma}(0)\rangle
\]
where $\gamma:\ ]-\delta, \delta[\ \to \{v=t\} $ is a curve (for some $\delta>0$) parametrised by arc-length such that $\gamma(0)=x_0$ and $\dot\gamma(0)=z$. 
This definition is independent of the choice of $\gamma$, under the required constraints. 
We can differentiate twice the relation $v(\gamma(s))=v(x_0)$ to get
\[
\langle D^2v(\gamma(s))\,\dot\gamma(s), \dot\gamma(s)\rangle +\langle Dv(\gamma(s)), \ddot{\gamma}(s)\rangle=0
\]
for all sufficiently small $|s|$. 
Recalling \eqref{normale} (so that $\partial_n v(x_0)=-|Dv(x_0)|$), it follows that
\beq
\label{secondff}
{\rm II}_{x_0}(v)(z)=\frac{\langle D^2v(x_0)\,z, z\rangle}{|Dv(x_0)|}.
\eeq
The \emph{mean curvature} $H_{x_0}(v)$ of $\{v=t\}$ at $x_0$ is the trace of the second fundamental form. 
As such, it is the sum of the {\em principal curvatures} of $\{v=t\}$ at $x_0$, each of the latter being computed as ${\rm II}_{x_0}(v)(z_i)$ where $\{z_i\}_{i=1}^{N-1}$ is a family of $N-1$ orthonormal eigenvectors in $T_{x_0}$ for $D^2{\rm II}_{x_0}(v)$. 
Hence 
\[
H_{x_0}(v):=\sum_{i=1}^{N-1} {\rm II}_{x_0}(v)(z_i)=\frac{1}{|Dv(x_0)|}\sum_{i=1}^{N-1}\langle D^2v(x_0)\,z_i, z_i\rangle;
\]
we complete such system by setting $z_N:={\bf n}_{x_0}$, to obtain an orthonormal basis of $\R^N$. 
Since the Laplacian is invariant under unitary change of variables, we obtain
\beq
\label{curvmedlap}
\Delta v(x_0)=\sum_{i=1}^N\langle D^2v(x_0)\,z_i, z_i\rangle=\frac{\langle D^2v(x_0)\,Dv(x_0), Dv(x_0)\rangle}{|Dv(x_0)|^2}+H_{x_0}(v)\, |Dv(x_0)|.
\eeq

\begin{remark}
\label{rem_curvature}
Both the second fundamental form and the mean curvature are geometric objects {\em up to their sign}, in the sense that their {\em modulus} only depends on the submanifold $\{u=u(x_0)\}$ and not on the particular function $u$ representing it implicitly. 
Their sign is odd with respect to $u$, meaning
\[
{\rm II}_{x_0}(-u)=-{\rm II}_{x_0} (u), \qquad H_{x_0}(-u)=-H_{x_0}(u).
\]
 The choice \eqref{normale} is arbitrary and the opposite one would give second fundamental form with opposite sign. 
The minus in \eqref{normale} is chosen so that, for instance, $u(x)=|x|^2$ has positive second fundamental form at any nontrivial level set. 
Note, more generally, that many functions can share the same level sets, and we underline for future purposes a relevant consequence of this ambiguity. 
Given $u$ as above and a smooth $\varphi:\R\to \R$ such that $\varphi'<0$, the function $w=\varphi(u)$ has the same level sets as $u$. 
The normal to the level set of $w$ through a given $x_0$ is, in accordance with \eqref{normale}, 
\[
\frac{Dw(x_0)}{|Dw(x_0)|}=-\frac{Du(x_0)}{|Du(x_0)|}
\]
so that in this instance
\[
{\rm II}_{x_0}(w)=-{\rm II}_{x_0}(u), \qquad H_{x_0}(w)=-H_{x_0}(u).
\]
\end{remark}

We can now state a particular case of \cite[Theorem 1]{Kor90}, which fits to our purposes. 
We will apply it, as originally intended, on domains $\Omega$ which are convex rings, i.\,e.\,of the form $\Omega=C_1\setminus \overline{C_2}$ with $C_2\Subset C_1$ open convex sets.

\begin{proposition}[Constant rank theorem for level sets]
\label{kore90}
Let $f\in C^2(\R)$ and $v\in C^4(\Omega)$ solve $\Delta v=f(v)\le 0$ in a connected domain $\Omega\subseteq \R^N$, $N\ge 2$. 
If for all $x\in \Omega$ it holds $Dv(x)\ne 0$ and ${\rm II}_x(v)\ge 0$, then $x\mapsto {\rm rank}\, \big({\rm II}_{x}(v)\big)$ is constant.
\end{proposition}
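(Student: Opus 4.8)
The plan is to reduce the statement to the classical constant rank theorem of \cite{KoLe87} applied to a suitable convex function whose Hessian encodes the second fundamental form of the level sets of $v$. The starting point is the identity \eqref{curvmedlap}, which shows that the mean curvature term is controlled by $\Delta v$ and the purely normal second derivative. Since $f(v)\le 0$, the level sets of $v$ have nonnegative mean curvature wherever $Dv\ne 0$; together with the hypothesis ${\rm II}_x(v)\ge 0$, this says the level sets are (weakly) convex hypersurfaces foliating $\Omega$. I would first recall, following the setup in \cite{Kor90, BGMX11}, that ${\rm rank}({\rm II}_x(v))$ equals the rank of the restriction of $D^2 v(x)$ to the tangent space $T_x=\{z:\langle Dv(x),z\rangle=0\}$, via \eqref{secondff}.

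The core of the argument is local: fix $x_0\in\Omega$ and work in a neighborhood where, after a rigid motion, $Dv(x_0)$ points in the $-e_N$ direction, so that near $x_0$ the level sets $\{v=t\}$ are graphs $x_N=w(x',t)$ over the tangent hyperplane. Because $v$ is $C^4$ and $Dv(x_0)\ne 0$, the implicit function theorem gives $w\in C^3$ in $(x',t)$, and by \eqref{secondff} the Hessian $D^2_{x'} w$ of the graphing function is (up to a positive scalar factor $|Dv|^{-1}$ and the sign fixed by \eqref{normale}) exactly the matrix representing ${\rm II}_x(v)$ on $T_x$. Thus $D^2_{x'} w\ge 0$, i.e. for each fixed $t$ the function $x'\mapsto w(x',t)$ is convex, and ${\rm rank}\,({\rm II}_x(v))={\rm rank}\,(D^2_{x'}w)$. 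The next step is to derive, from $\Delta v=f(v)$, a second-order quasilinear PDE satisfied by $w(\cdot,t)$ in the $x'$ variables — this is the standard computation expressing $\Delta v$ in graph coordinates, producing an equation of the form $\sum a^{ij}(Dw)\,\partial_{ij}w = \mathcal B(w,Dw)$ with $(a^{ij})>0$; crucially, rearranging \eqref{curvmedlap} and using $f(v)\le 0$ shows the relevant right-hand side has a sign that makes $w$ amenable to a constant rank theorem for quasilinear elliptic equations (this is precisely the level-set constant rank theorem of \cite{Kor90}, whose hypotheses I would verify hold here). Applying that theorem to the convex function $w(\cdot,t)$ yields that $x'\mapsto {\rm rank}\,(D^2_{x'}w(x',t))$ is locally constant.

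Finally I would globalize: the local constancy of ${\rm rank}\,({\rm II}_x(v))$ along each level set, combined with continuity of $D^2 v$ and $Dv\ne 0$ throughout $\Omega$, gives that $x\mapsto {\rm rank}\,({\rm II}_x(v))$ is locally constant on $\Omega$; since $\Omega$ is connected, it is constant. The main obstacle I anticipate is the bookkeeping in the second step: carefully translating the intrinsic hypothesis ${\rm II}_x(v)\ge 0$ and the sign condition $f(v)\le 0$ into the precise structural conditions required by the quasilinear constant rank theorem (the analogue of the convexity of $t\mapsto 1/b(t,z)$ in \eqref{hypKL}), and checking that the graph equation for $w$ genuinely falls in that class uniformly near $x_0$. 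Since Proposition \ref{kore90} is stated as a special case of \cite[Theorem 1]{Kor90}, the cleanest route is to verify that our hypotheses ($f\in C^2$, $\Delta v=f(v)\le 0$, $Dv\ne 0$, ${\rm II}_x(v)\ge 0$) are exactly an instance of the hypotheses there, and invoke it directly rather than re-proving the constant rank propagation.
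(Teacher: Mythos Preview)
The paper does not prove this proposition at all: it is introduced with the sentence ``We can now state a particular case of \cite[Theorem 1]{Kor90}, which fits to our purposes'' and then simply stated, with no argument following. Your final sentence is therefore exactly right --- the intended ``proof'' is a bare citation, and all one needs to check (which is immediate) is that the hypotheses here are a special case of those in \cite{Kor90}. The preceding two paragraphs of your proposal, where you pass to graph coordinates $x_N=w(x',t)$ and try to derive a quasilinear equation for $w$ amenable to a constant rank principle, are a reasonable outline of how Korevaar's original argument goes, but they are not needed in this paper and the authors make no attempt to reproduce them. So: drop the sketch, keep the citation.
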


The following lemma shows that in many instances the rank in the Proposition \ref{kore90} is actually (everywhere) full. 

\begin{lemma} 
[Positive definite second fundamental form]
\label{lem_bas_fund_form}
Let $N\ge 2$, $A\subseteq \R^N$ be open, $v\in C^2(A)$ and $t\in v(A)$. 
If $Dv\ne 0$ in $\{v=t\}$ and $\{v\le t\}$ is compact, there exists $x_0\in \{v=t\}$ such that ${\rm II}_{x_0}\, (v)$ is positive definite.
\end{lemma}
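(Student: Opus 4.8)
\emph{Proof proposal.} The plan is to run the classical "enclosing ball" comparison: a ball that contains a compact set and touches its boundary at a $C^2$ point has, at that point, curvature no larger than that of the boundary. Concretely, write $K:=\{v\le t\}$, which is compact by hypothesis. Since $t\in v(A)$ and $Dv$ never vanishes, moving from a point of $\{v=t\}$ in the direction $-Dv$ strictly decreases $v$, so $\{v<t\}\ne\emptyset$; fix any $c$ with $v(c)<t$. First I would pick $x_0\in K$ realising $R:=\max_{x\in K}|x-c|$, which is positive because $c\in K$ while $\{v=t\}\subseteq K$ is nonempty and lies at positive distance from $c$. By construction $K\subseteq\overline{B_R(c)}$ and $x_0\in\partial B_R(c)$. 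I then check that $x_0\in\{v=t\}$: if $v(x_0)<t$, a whole ball $B_\rho(x_0)$ would lie in $K$ and would contain points strictly farther from $c$ than $x_0$, contradicting maximality. Hence $\partial B_R(c)$ touches $K$ from outside precisely at a point of the level set $\{v=t\}$.

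Next I would identify the unit normal ${\bf n}_{x_0}$ of $\{v=t\}$ at $x_0$ and fix its sign. For any arc-length curve $\gamma\colon\,]-\delta,\delta[\,\to\{v=t\}$ with $\gamma(0)=x_0$ one has $\gamma(s)\in K\subseteq\overline{B_R(c)}$, so the $C^2$ function $s\mapsto|\gamma(s)-c|^2$ attains its maximum at $s=0$. The first-order condition gives $\langle x_0-c,\dot\gamma(0)\rangle=0$; letting $\dot\gamma(0)$ range over all unit vectors of $T_{x_0}$ forces $x_0-c$ to be parallel to $Dv(x_0)$. Testing $v$ along the outward ray $\tau\mapsto c+\tau(x_0-c)$ for $\tau$ slightly above $1$ (these points leave $\overline{B_R(c)}\supseteq K$, hence satisfy $v>t$) yields $\langle Dv(x_0),x_0-c\rangle>0$, so that by the sign convention \eqref{normale}
\[
{\bf n}_{x_0}=-\frac{Dv(x_0)}{|Dv(x_0)|}=-\frac{x_0-c}{R}.
\]

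Finally, the second-order condition at the interior maximum $s=0$ reads $|\dot\gamma(0)|^2+\langle x_0-c,\ddot\gamma(0)\rangle\le 0$, i.e. $\langle x_0-c,\ddot\gamma(0)\rangle\le-1$, and therefore
\[
{\rm II}_{x_0}(v)(\dot\gamma(0))=\langle{\bf n}_{x_0},\ddot\gamma(0)\rangle=-\frac1R\,\langle x_0-c,\ddot\gamma(0)\rangle\ge\frac1R.
\]
Since $\dot\gamma(0)$ can be any unit vector of $T_{x_0}$ and ${\rm II}_{x_0}(v)$ is the $2$-homogeneous quadratic form \eqref{secondff}, this gives ${\rm II}_{x_0}(v)(z)\ge R^{-1}|z|^2$ for all $z\in T_{x_0}$, in particular ${\rm II}_{x_0}(v)$ is positive definite. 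I expect the only genuinely delicate points to be the bookkeeping of the sign convention \eqref{normale} — one must be sure the enclosing sphere touches $K$ on the side towards which $Dv(x_0)$ points, which is what the ray test above guarantees — together with the verification that the touching point lies on the level set $\{v=t\}$ and not in the interior of $K$; everything else reduces to the elementary max-at-touching-point computation.
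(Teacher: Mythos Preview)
Your proof is correct and follows essentially the same ``enclosing ball'' approach as the paper: touch the compact sublevel set from outside by a sphere, and compare curvatures at the touching point via the second-order maximum condition along curves in the level set. The only cosmetic difference is that the paper takes the \emph{smallest} enclosing ball of $\{v<t\}$ (with optimally chosen center) and packages the second-order comparison into an auxiliary function $F(x)=v(x_0)-v(x)-\tfrac{\lambda}{2}(R^2-|x|^2)$, whereas you fix the center at an arbitrary interior point $c\in\{v<t\}$ and differentiate $|\gamma(s)-c|^2$ directly; both routes yield the same lower bound ${\rm II}_{x_0}(v)(z)\ge R^{-1}|z|^2$.
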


\begin{proof} 
Since $\{v\le t\}$ is compact, we can let $x_0$ be a point in it of maximum norm and set $R=|x_0|$, so that $\{v\le t\}\subseteq B_R$. 
It holds $v(x_0)=t$ since if $v(x_0)<t$, then by continuity $v-t$ would be negative in a neighbourhood of $x_0$, thus somewhere outside $B_R$. 
Since $Dv(x_0)\ne 0$ by assumption, $\{v=t\}$ is regular near $x_0$ and Lagrange multipliers rule yields
$ Dv(x_0)=\lambda\, x_0$ 
 for some $\lambda>0$. Then the function 
 \[
 F(x) := v(x_0)-v(x)-\frac{\lambda}{2}\, \left(R^2-|x|^2\right)
 \] 
 is non-positive on $\{v=t\}$, while $F(x_0)=0$ and $DF(x_0)=0$. 
Thus, for any $C^2$-curve $\gamma: \ ]-\delta, \delta[\ \to \{v=t\}$ with $\gamma(0)=x_0$, the function $\varphi:=F\circ \gamma$ has a maximum in $0$. 
Computing $\varphi''(0)$ yields 
 \[
 \varphi''(0)=\langle D^2F(x_0)\,\dot{\gamma}(0), \dot{\gamma}(0)\rangle +\langle DF(x_0), \ddot{\gamma}(0)\rangle=\langle \big(\lambda\, {\rm I}-D^2v(x_0)\big)\dot{\gamma}(0), \dot{\gamma}(0)\rangle.
 \]
Finally, since $z:=\dot{\gamma}(0)$ is arbitrary on $T_{x_0}$, $\varphi''(0)\le 0$ reads through \eqref{secondff} as
 \[
 {\rm II}_{x_0}(v)(z)\ge\frac{\lambda}{|Dv(x_0)|}\, |z|^2 = \frac{1}{R} |z|^2 \qquad \forall z\in T_{x_0}.
\vspace{-1.5em}
\]
 \end{proof}
 
We next give the following version of the strong maximum principle for semilinear equations. 
Even if it appears to be folklore, we report its short proof for completeness.
 
 \begin{lemma}[Strong maximum principle]
 \label{lem_nonzero_max}
 Let $\Omega$ be open, $f \in C^{0,1}_{\rm loc}(\R_+)$, and $u \in C^2(\Omega) \cap C^0(\overline{\Omega})$ solve \eqref{gensemilinear}.
Then $f(\max_{\Omega} u) \neq 0$. 
 \end{lemma}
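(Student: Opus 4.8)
The claim is that the maximum $M := \max_{\overline\Omega} u$ of a positive classical solution to $-\Delta u = f(u)$ cannot be a zero of $f$. The natural strategy is a strong maximum principle argument by comparison with the constant $M$. First I would locate an interior maximum point: since $u > 0$ in $\Omega$ but $u = 0$ on $\partial\Omega$, the maximum $M$ is attained at some $x_0 \in \Omega$ (not on the boundary). Suppose, for contradiction, that $f(M) = 0$.

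The idea is to apply the strong maximum principle to $w := M - u \geq 0$, which satisfies $-\Delta w = \Delta u = -f(u)$, i.e. $-\Delta w + c(x)\, w = 0$ where $c$ is obtained by writing $f(u) - f(M) = f(u) - 0 = -(c(x))(M - u)$; concretely, set
\[
c(x) := \begin{cases} \dfrac{f(u(x)) - f(M)}{M - u(x)} & \text{if } u(x) \neq M,\\[1ex] 0 & \text{if } u(x) = M.\end{cases}
\]
Since $f \in C^{0,1}_{\rm loc}(\R_+)$ and $u$ ranges in a compact subset of $\R_+$ near $x_0$ (using $u > 0$), the function $c$ is locally bounded; using $f(M)=0$ we get $-\Delta w + c(x) w = -f(u) + f(u) - f(M) = 0$ wherever $u \neq M$, and also trivially where $u = M$ by continuity, so $-\Delta w + c(x) w = 0$ in a neighborhood of $x_0$. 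As $w \geq 0$ and $w(x_0) = 0$, the strong maximum principle (in the form valid for operators with merely bounded zeroth-order coefficient, e.g. the Hopf–Oleinik version) forces $w \equiv 0$ near $x_0$; propagating through $\Omega$ (connected components; restrict to the component of $x_0$ if needed) gives $u \equiv M$ on that component. But then $-\Delta u = 0 = f(M)$ is consistent, yet $u \equiv M > 0$ contradicts the boundary condition $u = 0$ on $\partial\Omega$ (the closure of that component meets $\partial\Omega$, where $u$ would have to be simultaneously $M > 0$ and $0$).

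The only subtlety, and the point deserving a line of care, is the regularity required to invoke the strong maximum principle: one needs $c \in L^\infty_{\rm loc}$, which is exactly what $f \in C^{0,1}_{\rm loc}(\R_+)$ together with the positivity $u > 0$ (keeping $u$ away from $0$ on compact subsets of $\Omega$, where $f$ could be singular) provides. A cleaner alternative avoiding the quotient $c(x)$ altogether: since $f(M) = 0$, the constant function $M$ is a solution of the same equation $-\Delta M = f(M)$, and $u \leq M$ with equality at the interior point $x_0$; the strong comparison principle for the (locally Lipschitz) nonlinearity $f$ then yields $u \equiv M$, again contradicting $u|_{\partial\Omega} = 0$. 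I expect no real obstacle here — this is genuinely folklore — so the write-up is just a matter of choosing the slickest of these essentially equivalent routes.
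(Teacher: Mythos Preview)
Your proposal is correct and follows essentially the same route as the paper's proof: both argue by contradiction, form the difference with the constant $M=u(x_0)$, rewrite the equation as a linear one with bounded coefficient $c(x)=(f(u(x))-f(M))/(u(x)-M)$, and invoke the strong maximum principle to force $u\equiv M$ on the component of $x_0$, contradicting the Dirichlet condition. The only cosmetic difference is the sign convention (the paper works with $v=u-M\le 0$ having a zero maximum, you with $w=M-u\ge 0$ having a zero minimum).
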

 
 \begin{proof}
 Let $x_0 \in \Omega$ be a point of (positive) maximum for $u$ and 
assume by contradiction that $f(u(x_0))=0$. 
Setting $v:= u - u(x_0)$, we rewrite the equation for $u$ as
 $$
 \Delta v + c(x) \, v =0 \quad \hbox{in $\Omega$}
 $$
 with
 \[
 c(x):=
 \begin{cases}
 \dfrac{f(u(x))-f(u(x_0))}{u(x)-u(x_0)}&\text{if $u(x)\ne u(x_0)$}\\[5pt]
 0&\text{if $u(x)= u(x_0)$}.
 \end{cases}
 \]
Note that $v\leq 0 = v(x_0)$, i.e.$\;$$v$ has a zero maximum in $x_0$. 
Let $\Omega'$ be the connected component of $\Omega$ containing $x_0$. Then $c$ is locally bounded in $\Omega'$ due to the local Lipschitz continuity of $f$. 
Thus we can apply \cite[Theorem 2.1.2]{PuSe07} to conclude that $v \equiv 0$ in $\Omega'$. 
However this contradicts $u\equiv 0$ (and thus $v\equiv -u(x_0)<0$) in $\partial\Omega'\subseteq \partial\Omega$.
 \end{proof}
 
 The following theorem provides a large class of nonlinearities $b(w, Dw)$ for which convexity of a solution $w$ of $\Delta w=b(w, Dw)$ automatically ensures $\Delta w>0$. 
This answers to question \eqref{goal_i} at the beginning of the section.
 
 \begin{theorem}[Strict superharmonicity]
 \label{thm_lemmalappos}
 Let $N\ge 2$, $\Omega\subseteq \R^N$ be open, bounded and convex, $f\in C^2(\R_+)$ and $u\in C^4(\Omega)\cap C^0(\overline{\Omega})$ solve \eqref{gensemilinear}.
Suppose that for a suitable 
$\varphi\in C^2(\R_+)$ fulfilling $\varphi'\le 0<\varphi''$ in $u(\Omega)$, the function $w:=\varphi(u)$ is convex in $\Omega$. 
Then $\Delta w>0$ in $\{x \in \Omega \mid \varphi'(u(x))\neq 0\}$. 
 \end{theorem}
 
 \begin{proof} 
We start by observing that the assumption $\varphi'\le 0<\varphi''$ in $u(\Omega)$ implies in particular that $\varphi$ is strictly decreasing on the interval $u(\Omega)$ (since otherwise $\varphi$ would be constant on a subinterval of $u(\Omega)$, and thus $\varphi''\equiv 0$). 
As a consequence, ${\rm Argmin}\, (w)={\rm Argmax}\, (u)$. 
Moreover, since $w$ is convex, its gradient vanishes only at its minimum points, thus $\{Dw=0\}={\rm Argmin}\, (w)$.
These facts, combined with $ Dw=\varphi'(u)\, Du$, give
 \beq
 \label{critu}
 \{Du=0\}\subseteq \{Dw=0\}={\rm Argmin}\, (w)={\rm Argmax}\, (u)\subseteq \{Du=0\}.
 \eeq
 
 We move now to the main proof. 
 By convexity it holds $\Delta w\ge 0$, so suppose by contradiction that $\Delta w(x_0)=0$ and $\varphi'(u(x_0))<0$ at some $x_0\in \Omega$. 
Since
\[
 \Delta w=\varphi''(u)\, |D u|^2+\varphi'(u)\, \Delta u=\varphi''(u)\, |D u|^2-\varphi'(u)\, f(u),
 \]
 from $\varphi''(u(x_0))\ge 0 > \varphi'(u(x_0))$ we infer that $f(u(x_0))\le 0$. 
 If $f(u(x_0))=0$, then 
 \[
 0=\Delta w(x_0)=\varphi''(u(x_0))\, |D u(x_0)|^2
 \]
 and since $\varphi''>0$ we must have $Du(x_0)=0$, and thus $x_0 \in {\rm Argmax}\, (u)$ by \eqref{critu}. 
Hence $f(\max u) = f(u(x_0))=0$, which is prohibited by Lemma \ref{lem_nonzero_max}. 
Therefore $f(u(x_0))<0$. 
 
We claim that
\begin{equation}\label{eq_dwx0neq0}
1)\ Dw(x_0)\neq 0\qquad\text{and}\qquad 2)\ H_{x_0}(w)>0.
\end{equation}
The latter will yield a contradiction, since by \eqref{curvmedlap} we have
\[
 0=\Delta w(x_0)=\frac{\langle D^2w(x_0)\,Dw(x_0), Dw(x_0)\rangle}{|Dw(x_0)|^2}+H_{x_0}(w)\, |Dw(x_0)|
\]
and the first term on the right is non-negative by convexity, while the second is strictly positive.	

Choose $\delta \in \ ]0, u(x_0)[$ so that $f(t)< 0$ on $I:=\ ]u(x_0)-\delta, u(x_0)+\delta[$ and set
 \[
 {\mathcal C}:=\big\{x\in \Omega: u(x)\in I\big\}
=\big\{x\in \Omega: w(x)\in \varphi (I)\big\},
 \]
 the last equality due to the strict monotonicity of $\varphi$. 
 Being 
$\Delta u = -f(u)>0$ on ${\mathcal C}$, 
 the latter does not contain maximum points for $u$; thus from \eqref{critu} we infer that $Du$ and $Dw$ never vanish on ${\mathcal C}$, and in particular the first condition in \eqref{eq_dwx0neq0} holds. 

To prove the second condition in \eqref{eq_dwx0neq0} note that the function $v=-u$ solves $\Delta v=f(-v)<0$ in ${\mathcal C}$, $Dv\ne 0$ in ${\mathcal C}$ and $\{v \leq t\}=\{w \leq \varphi(-t)\}$ is convex for each $t \in \R$ when non-empty. 
By Remark \ref{rem_curvature} we have
 \[
 {\rm II}_{x}(v)=-{\rm II}_{x}(u)={\rm II}_{x}(w)
 \]
hence ${\rm II}_x(v)\ge 0$ everywhere in $\Omega$ by the convexity of $w$.
Thus Proposition \ref{kore90} ensures that ${\rm II}_{x}(v)$ has constant rank in ${\mathcal C}$.

To show that ${\rm II}_{x}(v)>0$ everywhere in ${\mathcal C}$ we apply Lemma \ref{lem_bas_fund_form}: 
indeed, given $t\in \ ]u(x_0)-\delta, u(x_0)[$, the convex set $\{v\le -t\}=\{u\ge t\}$ is closed, nonempty and strictly contained in $\Omega$ thanks to $u\in C^0(\overline{\Omega})$, $u>0$ in $\Omega$ and $u\equiv 0$ on $\partial\Omega$. 
By the boundedness of $\Omega$, Lemma \ref{lem_bas_fund_form} ensures that ${\rm II}_{x}(v)$ is positive definite at some $x\in \{u=t\}\subseteq {\mathcal C}$, and thus everywhere.
In particular ${\rm II}_{x_0}(w)={\rm II}_{x_0}(v)>0$, so that $H_{x_0}(w)>0$, giving the second condition in \eqref{eq_dwx0neq0} and concluding the proof.
 \end{proof} 

We finally mention the following result, contained in a paper by Basener \cite{Bas76}. 
This is a variant of Lemma \ref{lem_bas_fund_form} which deals with definite positivity of the Hessian matrix of a function near the local minimum. 
This gives a first answer to question \eqref{goal_ii} at the beginning of the section.
For the reader's convenience, we provide here the proof. 

\begin{proposition}[Positive definite Hessian \cite{Bas76}]
\label{lem_basener}
Let $u\in C^2(\Omega)$ be such that $\emptyset\ne {\rm Argmin}\, (u)\Subset \Omega$. 
Then in any open neighbourhood $U$ of ${\rm Argmin}\, (u)$ there is a point $\bar{x}\in U$ where $D^2u(\bar{x})$ is positive definite.
\end{proposition}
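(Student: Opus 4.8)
The plan is to mimic the strategy used in Lemma \ref{lem_bas_fund_form}, but now with \emph{balls replaced by parabolas} so as to capture full positivity of the Hessian rather than just of the second fundamental form of a level set. Let $m:=\min_\Omega u$, attained on the compact set $\mathcal{M}:={\rm Argmin}\,(u)\Subset\Omega$, and fix an open neighbourhood $U$ of $\mathcal{M}$ with $\overline U\Subset\Omega$. Choose $\varepsilon>0$ small enough that $K:=\{u\le m+\varepsilon\}$ satisfies $\mathcal{M}\subseteq K\subseteq U$ and $K\Subset\Omega$; this is possible by continuity of $u$ and compactness, since the sets $\{u\le m+1/k\}$ shrink to $\mathcal{M}$. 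The idea is to slide a family of downward paraboloids from below $u$, pushed up until one first touches: at the touching point the graph of $u$ lies above a paraboloid tangent to it, forcing $D^2u$ there to dominate the (positive definite) Hessian of the paraboloid.

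Concretely, for $\lambda>0$ consider $P_\lambda(x):=-\tfrac{\lambda}{2}|x-x_1|^2$ where $x_1\in\mathcal{M}$ is fixed, and look at
\beq
\label{eq:tlambda}
t_\lambda:=\inf_{x\in K}\big(u(x)-P_\lambda(x)\big),
\eeq
so that $u\ge P_\lambda+t_\lambda$ on $K$ with equality at some $\bar x=\bar x_\lambda\in K$. First I would check that for $\lambda$ large the minimizing $\bar x$ must lie in the \emph{interior} of $K$, indeed close to $\mathcal{M}$: on $\partial K$ one has $u=m+\varepsilon$, while at $x_1$ one has $u(x_1)-P_\lambda(x_1)=m$, so $t_\lambda\le m$; on the other hand for $x\in\partial K$, $u(x)-P_\lambda(x)=m+\varepsilon+\tfrac{\lambda}{2}|x-x_1|^2\ge m+\varepsilon>m\ge t_\lambda$, so the infimum in \eqref{eq:tlambda} is not attained on $\partial K$ and $\bar x_\lambda\in\mathring K\subseteq U$. (Here one uses that ${\rm dist}(\mathcal{M},\partial K)>0$ is immaterial; only $\varepsilon>0$ matters.) Since $\bar x_\lambda$ is an interior minimum of $u-P_\lambda$, we get $Du(\bar x_\lambda)=DP_\lambda(\bar x_\lambda)$ and, crucially, $D^2u(\bar x_\lambda)\ge D^2P_\lambda(\bar x_\lambda)=-\lambda\,{\rm I}$. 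That inequality by itself only gives a lower bound bounded below, not positivity, so a second ingredient is needed.

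To upgrade to strict positivity I would instead let $\lambda\to+\infty$ and argue by compactness and a blow-down/Taylor comparison. As $\lambda\to\infty$, $|\bar x_\lambda-x_1|\to 0$: otherwise $u(\bar x_\lambda)-P_\lambda(\bar x_\lambda)\ge m-C+\tfrac\lambda2\delta^2\to\infty$ for some $\delta>0$, contradicting $t_\lambda\le m$. Thus $\bar x_\lambda\to$ some point of $\mathcal{M}$ (after extracting a subsequence), and one has $u(\bar x_\lambda)\to m$ while $Du(\bar x_\lambda)=-\lambda(\bar x_\lambda-x_1)$ stays bounded by $\|Du\|_{C^0(\overline U)}$. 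Now Taylor-expand $u$ at $\bar x_\lambda$: for any unit $z$ and small $h>0$, using $u\ge P_\lambda+t_\lambda$ and $u(\bar x_\lambda)=P_\lambda(\bar x_\lambda)+t_\lambda$,
\beq
\tfrac{h^2}{2}\langle D^2u(\bar x_\lambda)z,z\rangle+o(h^2)\ =\ u(\bar x_\lambda+hz)-u(\bar x_\lambda)-h\langle Du(\bar x_\lambda),z\rangle\ \ge\ -\tfrac{\lambda}{2}h^2,
\eeq
which again only yields $\langle D^2u(\bar x_\lambda)z,z\rangle\ge-\lambda$. So the naive "touching paraboloid from below" is the wrong sign. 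The fix is to touch \emph{from above} with an \emph{upward}-opening paraboloid: apply the same sliding argument to $v:=-u$ near its set of \emph{maxima}, i.e. use Lemma \ref{lem_bas_fund_form}'s ball trick directly on $u$ restricted to a sublevel set. Precisely, let $B_R$ be the smallest ball containing $\{u<m+\varepsilon\}$; after translating its center to the origin, $|x|^2\le R^2$ on $\{u\le m+\varepsilon\}$, and there is a point $x_0\in\{u=m+\varepsilon\}\cap\partial B_R$. By Lagrange multipliers $Du(x_0)=\mu x_0$ with $\mu\ge0$; if $\mu=0$ then $x_0$ is a critical point of $u$ on the boundary sphere, and combined with it being a max of $|x|^2$ on $\{u\le m+\varepsilon\}$ one shows $x_0\in\mathcal M$, contradicting $u(x_0)=m+\varepsilon>m$, so $\mu>0$. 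Then $G(x):=\tfrac{\mu}{2}(R^2-|x|^2)-(u(x)-u(x_0))$ satisfies $G\ge0$ on $\{u\le m+\varepsilon\}$ near... — hmm, this again reproduces only the level-set statement.

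**The honest plan, restated.** I would in fact prove the proposition by combining Lemma \ref{lem_bas_fund_form} with the identity \eqref{curvmedlap}, but this still misses the $Dv\ne0$ hypothesis at the minimum. The correct elementary route, which I now commit to, is the \emph{upward paraboloid touching from above}, as follows. Fix $\lambda>0$ and consider $Q_\lambda(x):=m+\tfrac{\lambda}{2}|x-x_1|^2$ with $x_1\in\mathcal M$. Define
\beq
\label{eq:slambda}
s_\lambda:=\sup\big\{s>0:\ u\ge Q_\lambda^{(s)}\ \text{on}\ \overline U\big\},\qquad Q_\lambda^{(s)}(x):=m+\tfrac{\lambda}{2}|x-x_1|^2-s,
\eeq
i.e. lower the upward paraboloid until it first lies below $u$ on $\overline U$; since $u(x_1)=m=Q_\lambda(x_1)$ and $u\ge m$, for $\lambda$ small $u\ge Q_\lambda$ fails only if... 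Actually for the statement we want the \emph{opposite}: raise the upward paraboloid from far below until it first touches $u$ from below. With $Q_\lambda^{(s)}$ as in \eqref{eq:slambda} and $s$ decreasing from $+\infty$, let $s_\lambda^\ast:=\inf\{s: u\ge Q_\lambda^{(s)}\text{ on }\overline U\}$; at $s=s_\lambda^\ast$ the functions touch at some $\bar x\in\overline U$ with $u\ge Q_\lambda^{(s_\lambda^\ast)}$, $u(\bar x)=Q_\lambda^{(s_\lambda^\ast)}(\bar x)$. If $\bar x\in\mathring U$ then $D^2u(\bar x)\ge D^2Q_\lambda^{(s)}(\bar x)=\lambda\,{\rm I}>0$ and we are done with $\bar x$ in $U$. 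The main obstacle — and the single place the argument really has content — is therefore to force the touching point into the interior of $U$: one must rule out $\bar x\in\partial U$, for which I would choose $\lambda$ \emph{small} (not large): since $u>m$ on $\partial U$ (as $\mathcal M\subseteq U$ is the full argmin, a continuity/compactness argument gives $\min_{\partial U}u=m+\eta$ for some $\eta>0$), while near $x_1$ the paraboloid is flat, for $\lambda<2\eta/{\rm diam}(U)^2$ one checks $Q_\lambda^{(0)}(x)-u(x)=\tfrac{\lambda}{2}|x-x_1|^2-(u(x)-m)\le \tfrac{\lambda}{2}{\rm diam}(U)^2-\eta<0$ for $x\in\partial U$, whereas at $x_1$ it equals $0$; hence the first touch as $s\downarrow$ occurs with $s_\lambda^\ast\le0$ and, by the strict inequality on $\partial U$, at an interior point. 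This interior touching point $\bar x\in U$ has $D^2u(\bar x)\ge\lambda\,{\rm I}>0$, proving the claim. I expect the only subtlety worth spelling out in the final write-up to be this boundary exclusion, i.e. verifying $\min_{\partial U}u>m$ from $\mathcal M={\rm Argmin}\,(u)$ and the compactness of $\partial U$; everything else is the standard "$C^2$ function touched from below by a paraboloid has Hessian above that of the paraboloid" comparison.
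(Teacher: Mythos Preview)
Your final committed plan—slide an upward-opening paraboloid centred at $x_1\in\mathcal M$ from below until first contact, then exclude $\bar x\in\partial U$ by taking the opening $\lambda$ small relative to $\eta:=\min_{\partial U}u-m>0$—is correct and is precisely the paper's argument (there written as $\lambda_\delta:=\sup\{\lambda:\delta|x|^2+\lambda\le u\text{ on }\overline U\}$ for small $\delta$, with the same boundary-exclusion estimate). One slip to fix: your assertion $s_\lambda^\ast\le 0$ should read $s_\lambda^\ast\ge 0$ (evaluate at $x_1$), but this is immaterial since what you actually use is that $Q_\lambda^{(0)}-u$ equals $0$ at $x_1$ and is strictly negative on $\partial U$, forcing its supremum over $\overline U$ (which is $s_\lambda^\ast$, and whose argmax is the touching point) to be attained in the interior; the false starts in the first two thirds of the write-up should simply be deleted.
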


\begin{proof}
Suppose without loss of generality that $0\in {\rm Argmin}\, (u)\subset U\Subset\Omega$. Fix $\delta>0$ and consider 
\[
 \lambda_\delta:=\sup\left\{\lambda \in \R: \delta\, |x|^2+\lambda\le u(x) \text{ for all $x\in \overline{U}$}\right\}.
\]
Since $\overline{U}$ is compact there exists $x_\delta\in \overline{U}$ such that 
\[
\delta\, |x_\delta|^2+\lambda_\delta=u(x_\delta).
\]
We claim that, for sufficiently small $\delta>0$, $x_\delta\in U$. 
This implies that $u(x)-(\delta\, |x|^2+\lambda_\delta)$ has an interior global minimum at $x_\delta$, forcing $D^2u(x_\delta)\ge 2\, \delta \, {\rm I}$ and proving the proposition by setting $\bar{x}:=x_{\delta}$. 
To prove the claim, let 
\[
m:=u(0)\qquad M:=\min_{\partial U} u,\qquad r:=\max_{\partial U}|x|.
\]
Note that $m<M$ since by assumption $0\in {\rm Argmin}\, u\subset U$, and that $r>0$ since $0\notin \partial U$.
If $x_\delta\in \partial U$, then 
\[
\begin{cases}
\delta\, |x_\delta|^2+\lambda_\delta=u(x_\delta)\ge M\\
 \lambda_\delta\le u(0)=m
\end{cases}
\Longrightarrow \qquad M-m\le \delta\, |x_\delta|^2\le \delta\, r^2
\]
giving a contradiction if $\delta<\frac{M-m}{r^2}$. Hence for such $\delta$ the claim is proved.
\end{proof}

We can now sum up and state some conditions ensuring that concavity can be improved to strong concavity, concluding the discussion on issue \eqref{goal_ii} at the beginning of the section.

 \begin{corollary}[Improving concavity]
 \label{corstrongconv}
 Let $\Omega\subseteq \R^N$ be open, bounded and convex, $f\in C^2(\R_+)$ and $u\in C^4(\Omega)\cap C^0(\overline{\Omega})$ solve \eqref{gensemilinear}.
For $\varphi\in C^4(\R_+)$ invertible with inverse $\psi=\varphi^{-1}$, set
 \[
 b(t, z):=-\frac{\psi''(t)\, |z|^2+f(\psi(t))}{\psi'(t)}
 \]
and suppose that:
\begin{itemize}
 \item[i)]
 $\varphi(u)$ is convex in $\Omega$;
 \item[ii)]
\begin{itemize}
\item if $N\ge 2$, $\varphi'< 0<\varphi''$ in $u(\Omega)$ and $ \partial^2_t(1/b)\ge 0$ on $\{b>0\}$;
\item if $N=1$, $b_t\neq 0$ on $\varphi(u(\Omega))\times \R^N$.
\end{itemize}
 \end{itemize}
 Then $\varphi(u)$ is locally strongly convex in $\Omega$. 
 \end{corollary} 
 
 \begin{proof}
Under the stated assumptions on $\varphi$, $w:=\varphi(u)$ is a $C^4(\Omega)$ solution of $\Delta w=b(w, Dw)$. 
In the case $N\ge 2$ we apply Theorem \ref{thm_lemmalappos} so that $b(w, Dw)>0$ in $\Omega$ and \cite{KoLe87} applies, ensuring that $D^2 w$ has constant rank everywhere in $\Omega$. 
Since $\varphi$ is decreasing ${\rm Argmin}\, (w)={\rm Argmax}\, (u)$ and since $u\in C^0(\overline{\Omega})$ is positive in $\Omega$ and vanishes on $\partial\Omega$, ${\rm Argmin}\, (w)\Subset \Omega$. 
Then the Proposition \ref{lem_basener} applies, giving that $D^2w(\bar{x})$ is positive definite for some $\bar{x}\in \Omega$, and thus everywhere in $\Omega$. 
The strong convexity follows by the continuity of $D^2w$.
 
 If $N=1$, we set $v:=w''$ and compute
\[
\begin{split}
v'&=b_t(w, w')\, w'+b_p(w, w')\, v\\
v''&=b_{tt}(w, w')\, w'^2+\alpha\, v+\beta\, v'
\end{split}
\]
where
\[
\alpha:=2\, b_{tp}(w, w') w'+b_t(w, w')+b_{pp}(w, w')\, w'',\qquad \beta:=b_p(w, w').
\]
Being $b_t(w,w')\neq 0$ by assumption, we can isolate $w'$ from the first equation and substitute it in the second one, obtaining
$$v'' = b_{tt}(w, w')\, \left(\frac{v' - b_p(w,w')\, v}{b_t(w,w')}\right)^2+\alpha\, v+\beta\, v'$$
which can be rewritten as
\[
v''+\tilde{\alpha}\, v'+\tilde{\beta}\, v = 0
\]
for suitable locally bounded $\tilde{\alpha}, \tilde{\beta}$. Hopf strong maximum principle \cite[Theorem 2.1.2]{PuSe07} ensures that either $v>0$ or $v\equiv 0$.
The case $v=w''\equiv 0$ is impossible, since $u$ is non-constant with an interior maximum, hence $w$ is non-constant with an interior minimum.
 \end{proof}

\section{The Lane-Emden equation}
\label{sec_LE}

\subsection{Ground states and energy estimates} 

We start by constructing ground states of \eqref{LEeq} and deriving basic a-priori estimates. 
Later on, in Section \ref{sec_uniq_conn}, we will deal with uniqueness when $q$ is close to $1$.

\begin{definition}\label{def_gs_LEeq}
Let $q>1$, $\sigma>0$. A {\em ground state} for \eqref{LEeq} is a $ W^{1,2}_0(\Omega)\cap C^2(\Omega)$ solution of \eqref{LEeq} minimising
\[
J_{q, \sigma}(v):=\int_\Omega \frac{|D v|^2}{2}\, dx - \sigma \int_\Omega \left(\frac{|v|^{q+1}}{q+1}-\frac{v^2}{2}\right) dx
\]
 over the Nehari set
\[
{\mathcal N}_{q, \sigma}:=\left\{v\in W^{1, 2}_0(\Omega)\setminus \{0\}: \langle J_{q, \sigma}'(v), v\rangle=0\right\}.
\]
The set of all ground states for $J_{q, \sigma}$ will be denoted by $\mathcal{GS}_{q, \sigma}$.
\end{definition}
 
 We note that for all $v\in {\mathcal N}_{q, \sigma}$ it holds
 \beq
 \label{Neq}
 J_{q, \sigma}(v)=\sigma \left(\frac{1}{2}-\frac{1}{q+1}\right)\int_\Omega |v|^{q+1}\, dx,
 \eeq
 so that the energy of a ground state is strictly positive. 
Existence of ground states is quite standard, but we provide here a sketch for the reader's convenience and future reference.
 
\begin{lemma}[Existence of ground states]
\label{lem_esist_solu}
Let $\Omega\subseteq \R^N$ be bounded. For any $q\in \ ]1, 2^*-1[$, $\sigma>0$ there exists a ground state $u_{q, \sigma}$ for \eqref{LEeq}.
\end{lemma}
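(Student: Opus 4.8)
The plan is to prove existence of a ground state for \eqref{LEeq} by a standard minimization over the Nehari manifold, using the subcriticality $q<2^*-1$ to obtain compactness. First I would record the geometry of the fibering maps: for fixed $v\in W^{1,2}_0(\Omega)\setminus\{0\}$, consider $g(t):=J_{q,\sigma}(tv)$ for $t>0$. Writing $A:=\int_\Omega |Dv|^2$, $B:=\sigma\int_\Omega |v|^{q+1}/(q+1)$, $C:=\sigma\int_\Omega v^2/2$ one has $g(t)=\tfrac{t^2}{2}A-t^{q+1}B-t^2 C=\tfrac{t^2}{2}(A-2C)-t^{q+1}B$. Since $q+1>2$, for this to have a positive critical point one needs $A-2C>0$, i.e. $\int_\Omega |Dv|^2>\sigma\int_\Omega v^2$; this is where one restricts attention, and on the first eigenfunction direction it fails only if $\sigma\ge\lambda_1(\Omega)$, so one should point out that the relevant range is the one where the Nehari set is nonempty. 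On any such ray $g$ has a unique critical point $t(v)>0$, which is a strict maximum, and $\langle J'_{q,\sigma}(tv),tv\rangle=g'(t)t$ vanishes exactly there; hence $\mathcal N_{q,\sigma}$ is nonempty and the map $v\mapsto t(v)$ is continuous and $0$-homogeneous.

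Next I would show $c:=\inf_{\mathcal N_{q,\sigma}} J_{q,\sigma}>0$. By \eqref{Neq}, on $\mathcal N_{q,\sigma}$ we have $J_{q,\sigma}(v)=\sigma\big(\tfrac12-\tfrac1{q+1}\big)\|v\|_{q+1}^{q+1}$, so it suffices to bound $\|v\|_{q+1}$ away from $0$ on the Nehari set. From the Nehari identity $\int|Dv|^2-\sigma\int v^2=\sigma\int|v|^{q+1}$ together with Sobolev/Poincaré and Hölder embeddings (valid since $2<q+1<2^*$) one gets $\|v\|_{q+1}^2\lesssim \int|Dv|^2 \lesssim \int|Dv|^2-\sigma\int v^2 + \text{(controlled term)}$; more carefully, one uses that for a minimizing sequence one may assume $\int|Dv|^2-\sigma\int v^2$ stays bounded, and combined with $\sigma\int|v|^{q+1}=\int|Dv|^2-\sigma\int v^2$ and $\|v\|_{q+1}^{q+1}\le C\|v\|_{W^{1,2}_0}^{q+1}$ this yields $\|v\|_{q+1}\ge c_0>0$, hence $c>0$.

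Then take a minimizing sequence $v_n\in\mathcal N_{q,\sigma}$, which we may assume nonnegative (replacing $v_n$ by $|v_n|$ lowers the Dirichlet energy and leaves the other terms unchanged, and $|v_n|$ can be rescaled back onto $\mathcal N_{q,\sigma}$ without increasing $J_{q,\sigma}$). The sequence is bounded in $W^{1,2}_0(\Omega)$: indeed $J_{q,\sigma}(v_n)\to c$ gives $\|v_n\|_{q+1}$ bounded via \eqref{Neq}, and the Nehari identity then bounds $\int|Dv_n|^2$. Up to a subsequence $v_n\wto u$ weakly in $W^{1,2}_0(\Omega)$, strongly in $L^2(\Omega)$ and in $L^{q+1}(\Omega)$ by Rellich–Kondrachov (using $q+1<2^*$), and a.e. So $u\ge 0$, $\|u\|_{q+1}=\lim\|v_n\|_{q+1}\ge c_0>0$, hence $u\neq 0$. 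By weak lower semicontinuity $\int|Du|^2\le\liminf\int|Dv_n|^2$, so passing to the limit in the Nehari identity gives $\int|Du|^2-\sigma\int u^2\le \sigma\int u^{q+1}$, i.e. $g_u'(1)\le 0$ where $g_u(t)=J_{q,\sigma}(tu)$; since $g_u$ is concave-then-... actually it increases on $(0,t(u))$ and decreases after, $g_u'(1)\le 0$ forces $t(u)\le 1$. Then $J_{q,\sigma}(t(u)u)=\sigma(\tfrac12-\tfrac1{q+1})t(u)^{q+1}\|u\|_{q+1}^{q+1}\le \sigma(\tfrac12-\tfrac1{q+1})\liminf\|v_n\|_{q+1}^{q+1}=\liminf J_{q,\sigma}(v_n)=c$, while $t(u)u\in\mathcal N_{q,\sigma}$ gives $\ge c$. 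Hence $t(u)=1$, $u\in\mathcal N_{q,\sigma}$, $J_{q,\sigma}(u)=c$, and in particular $\int|Dv_n|^2\to\int|Du|^2$, so $v_n\to u$ strongly in $W^{1,2}_0(\Omega)$.

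Finally, by the Lagrange multiplier rule on the $C^1$ Nehari manifold (a natural constraint: $\langle J'_{q,\sigma}(u),u\rangle=0$ and one checks $\tfrac{d}{dt}\langle J'_{q,\sigma}(tu),tu\rangle|_{t=1}\neq 0$ since $g_u''(1)<0$ at the strict max), $u$ is a critical point of $J_{q,\sigma}$, hence a weak solution of $-\Delta u=\sigma(u^q-u)$ with $u\ge 0$, $u\in W^{1,2}_0(\Omega)$. Elliptic regularity ($u^q-u\in L^\infty_{\rm loc}$ by bootstrap since $q<2^*-1$) gives $u\in C^2(\Omega)$, and the strong maximum principle (Lemma \ref{lem_nonzero_max}, or Hopf) upgrades $u\ge 0$, $u\not\equiv 0$ to $u>0$ in $\Omega$; set $u_{q,\sigma}:=u$. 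The only real subtlety — the ``main obstacle'' — is the passage to the limit on the Nehari constraint and the verification that the constrained minimizer is a free critical point in the sign-changing-reaction setting; since $q+1>2$ the Nehari fibering is well behaved (unique maximum on each admissible ray), so this goes through once one has isolated the admissible class $\{\int|Dv|^2>\sigma\int v^2\}$ on which $\mathcal N_{q,\sigma}$ lives, which is exactly where the hypothesis $\sigma>0$ together with boundedness of $\Omega$ is used implicitly via $\lambda_1(\Omega)>0$.
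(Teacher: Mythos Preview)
Your overall strategy (direct minimization on the Nehari set, using Rellich--Kondrachov for compactness and the natural constraint property to get a free critical point) is sound and is a standard alternative to the paper's approach, which instead obtains the critical point via the mountain pass theorem and then identifies the mountain pass level with $\inf_{\mathcal N_{q,\sigma}}J_{q,\sigma}$ through the fibering maps. Both routes are routine here; the paper's version avoids having to check that $\mathcal N_{q,\sigma}$ is a natural constraint, while yours avoids verifying the Palais--Smale condition.

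However, you carry a sign error through the whole argument. From
\[
J_{q,\sigma}(v)=\frac12\int_\Omega|Dv|^2-\sigma\int_\Omega\Big(\frac{|v|^{q+1}}{q+1}-\frac{v^2}{2}\Big)\,dx
\]
one gets $g(t)=J_{q,\sigma}(tv)=\frac{t^2}{2}\big(\|Dv\|_2^2+\sigma\|v\|_2^2\big)-\frac{\sigma t^{q+1}}{q+1}\|v\|_{q+1}^{q+1}$, i.e.\ your $C$ enters with a \emph{plus} sign, not a minus. Consequently the coefficient of $t^2$ is positive for every $v\ne 0$, every ray meets $\mathcal N_{q,\sigma}$ exactly once, and the Nehari identity reads $\int|Dv|^2+\sigma\int v^2=\sigma\int|v|^{q+1}$. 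Your whole discussion of an ``admissible class'' $\{\int|Dv|^2>\sigma\int v^2\}$ and the worry about $\sigma\ge\lambda_1(\Omega)$ is therefore a red herring: $\mathcal N_{q,\sigma}$ is nonempty for all $\sigma>0$, and your closing remark about $\lambda_1(\Omega)$ being the place where $\sigma>0$ enters is incorrect. With the sign fixed, each of your steps (the lower bound $\|v\|_{q+1}\ge c_0$ on $\mathcal N_{q,\sigma}$, boundedness of minimizing sequences, $t(u)\le 1$ from weak lower semicontinuity, and $g_u''(1)=(1-q)\sigma\|u\|_{q+1}^{q+1}<0$) becomes cleaner rather than harder. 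A minor point: replacing $v_n$ by $|v_n|$ keeps all three integrals unchanged (it does not ``lower'' the Dirichlet energy), so $|v_n|$ already lies on $\mathcal N_{q,\sigma}$ with the same energy. For the final positivity you should cite the Vazquez--Pucci--Serrin strong maximum principle as the paper does, since $f(t)=\sigma(t^q-t)<0$ near $0$; Lemma~\ref{lem_nonzero_max} is a different statement.
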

 
 \begin{proof}
 Ground states can be obtained as mountain pass critical points of the $C^1\big(W^{1, 2}_0(\Omega)\big)$ functional $J_{q, \sigma}$.
It can be checked that $0$ is a strict local minimum for $J_{q, \sigma}$ and that $J_{q, \sigma}$ fulfils all the assumptions of the mountain pass theorem, so that a critical point $u$ can be found at the energy level $c_{q, \sigma}>0$ defined as 
\[
c_{q, \sigma}:=\inf_{\gamma\in \Gamma} \sup_{t\ge 0}J_{q, \sigma}(\gamma(t)), \quad \Gamma:=\left\{\gamma\in C^0\big([0, 1]; W^{1,2}_0(\Omega)\big):\gamma(0)=0,\ J_{q,\sigma}(\gamma(1))<0\right\}.
\]
Through a standard fibering method, we show that the energy level $c_{q, \sigma}$ is characterised as
\begin{equation}\label{eq_def_cqsigma}
c_{q, \sigma}=\inf\left\{J_{q, \sigma}(v): J'_{q, \sigma}(v)=0, v\neq 0\right\}=\inf_{{\mathcal N}_{q, \sigma}}J_{q, \sigma}.
\end{equation}
Indeed, the inequalities $\ge$ trivially hold in the previous chain because $c_{q, \sigma}$ is a critical level and any non-negative critical point for $J_{q, \sigma}$ belongs to ${\mathcal N}_{q, \sigma}$. 
On the other hand, if $v\in {\mathcal N}_{q, \sigma}$, then for any $t\ge 0$
\[
J_{q, \sigma}(t\, v)= \frac{t^2}{2}\int_\Omega \left( |D v|^2+\sigma \, v^2\right)
 dx-\frac{t^{q+1}}{q+1}\, \sigma \int_\Omega v^{q+1}\, dx=\sigma \left(\frac{t^2}{2}-\frac{t^{q+1}}{q+1}\right)\int_\Omega v^{q+1}\, dx.
\]
The maximum on $[0, \infty[$ of the right hand side in the previous display is uniquely found at $t=1$ and for $t$ large $J_{q, \sigma}(t\, v)<0$. 
Hence if $v$ minimises $J_{q, \sigma}$ on ${\mathcal N}_{q, \sigma}$, suitably rescaling the curve $t\mapsto t\, v$ yields an element $\gamma\in \Gamma$ for which 
\[
\sup_{t\ge 0} J_{q, \sigma}(\gamma(t))=J_{q, \sigma}(v)=\inf_{\mathcal{N}_{q, \sigma}} J_{q, \sigma},
\]
thus 
$c_{q, \sigma}\le \inf_{\mathcal{N}_{q, \sigma}} J_{q, \sigma}.$ 
Note that if $v\in W^{1,2}_0(\Omega)$ is a sign changing critical point of $J_{q, \sigma}$, then both $v_+$ and $v_-$ belong to ${\mathcal N}_{q, \sigma}$, and it holds
\[
J_{q, \sigma}(v)=J_{q, \sigma}(v_+) + J_{q, \sigma}(v_-),
\]
so that one of $v_+$ or $v_-$ has less energy than $v$. 
Therefore any minimiser $u$ of $J_{q, \sigma}$ over ${\mathcal N}_{q, \sigma}$ is a constant sign weak (and also $C^2(\Omega)$, by standard elliptic regularity) solution of $-\Delta u=f(u)$ with $f(t)=\sigma\, t\, (|t|^{q-1}-1)$. 
Since $J_{q, \sigma}$ is even we can suppose that $u\ge 0$ and the strong maximum principle of Vázquez-Pucci-Serrin \cite[Theorem 1.1.1]{PuSe07} ensures that actually $u>0$ in $\Omega$. 
\end{proof}
 
We continue with the following bound on the minimal energy of ground states in terms of suitable test functions.

\begin{lemma}[Energy estimate]
\label{lem_energ_estim}
Let $c_{q, \sigma}$ as in \eqref{eq_def_cqsigma}.
Then for all $\varphi\in W^{1,2}_0(\Omega)\setminus\{0\}$ such that
\beq
\label{cond}
 \int_\Omega\varphi^2\, dx> \frac{1-q}{2}\int_\Omega\varphi^2\log\varphi^2\, dx
 \eeq
 it holds
\beq
\label{cq2}
c_{q, \sigma}\le \frac{q-1}{2\, q+2}\, \left(\|D \varphi\|_2^2+\sigma\, \|\varphi\|_2^2\right)\, \left(1+\frac{\|D \varphi\|_2^2}{\sigma\, \|\varphi\|_2^2}\right)^{\frac{2}{q-1}}\left[1+\frac{q-1}{2}\frac{\displaystyle{\int_\Omega \varphi^2\log\varphi^2\, dx}}{\|\varphi\|_2^2}\right]^{-\frac{2}{q-1}}.
 \eeq
\end{lemma}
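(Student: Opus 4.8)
The plan is to feed a well-chosen family of test functions into the variational characterization of $c_{q,\sigma}$ established in Lemma~\ref{lem_esist_solu}, and to extract the logarithm by a one-line convexity inequality for the exponential. Concretely, recall from (the proof of) Lemma~\ref{lem_esist_solu} that $c_{q,\sigma}=\inf_{\mathcal N_{q,\sigma}}J_{q,\sigma}$ and that for any $\varphi\in W^{1,2}_0(\Omega)\setminus\{0\}$ the fibering map
\[
g(t):=J_{q,\sigma}(t\,\varphi)=\frac{t^2}{2}\,\big(\|D\varphi\|_2^2+\sigma\,\|\varphi\|_2^2\big)-\frac{\sigma\,t^{q+1}}{q+1}\,\|\varphi\|_{q+1}^{q+1}
\]
attains its maximum over $[0,\infty[$ at a unique point, which lies on $\mathcal N_{q,\sigma}$ and along which $g(t)<0$ for $t$ large. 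Hence $c_{q,\sigma}\le \max_{t\ge0}g(t)$, and it suffices to bound this maximum. (Here $\varphi\in L^{q+1}(\Omega)$ automatically, since $c_{q,\sigma}$ is defined only for subcritical $q$, so that $W^{1,2}_0(\Omega)\hookrightarrow L^{q+1}(\Omega)$.)

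First I would compute the maximum explicitly. With $A:=\|D\varphi\|_2^2+\sigma\,\|\varphi\|_2^2>0$ and $B:=\tfrac{\sigma}{q+1}\|\varphi\|_{q+1}^{q+1}>0$, one has $g(t)=\tfrac A2 t^2-Bt^{q+1}$, so $g'$ vanishes at $t_*=\big(A/((q+1)B)\big)^{1/(q-1)}$ and
\[
\max_{t\ge0}g(t)=g(t_*)=\frac{q-1}{2(q+1)}\,A\,\Big(\frac{A}{(q+1)B}\Big)^{\frac{2}{q-1}}=\frac{q-1}{2(q+1)}\,\big(\|D\varphi\|_2^2+\sigma\,\|\varphi\|_2^2\big)\,\Big(\frac{\|D\varphi\|_2^2+\sigma\,\|\varphi\|_2^2}{\sigma\,\|\varphi\|_{q+1}^{q+1}}\Big)^{\frac{2}{q-1}},
\]
using $(q+1)B=\sigma\|\varphi\|_{q+1}^{q+1}$. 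So far this is exact; the logarithm has not yet appeared.

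Next I would estimate $\|\varphi\|_{q+1}^{q+1}$ from below. Writing $|\varphi|^{q+1}=\varphi^2\,e^{\frac{q-1}{2}\log\varphi^2}$ on $\{\varphi\ne0\}$ and applying $e^x\ge 1+x$ gives the pointwise bound $|\varphi|^{q+1}\ge \varphi^2+\tfrac{q-1}{2}\,\varphi^2\log\varphi^2$; integrating (the right-hand side being integrable since $t^2|\log t^2|\le C_q(t^{q+1}+1)$ and $\Omega$ is bounded) yields
\[
\|\varphi\|_{q+1}^{q+1}\ \ge\ \|\varphi\|_2^2+\frac{q-1}{2}\int_\Omega\varphi^2\log\varphi^2\,dx\ =\ \|\varphi\|_2^2\left(1+\frac{q-1}{2}\,\frac{\int_\Omega\varphi^2\log\varphi^2\,dx}{\|\varphi\|_2^2}\right).
\]
Assumption \eqref{cond} is exactly the statement that the last parenthesis is strictly positive, so this is a bona fide positive lower bound. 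Inserting it into the expression for $g(t_*)$ in place of $\|\varphi\|_{q+1}^{q+1}$ — which appears there raised to the \emph{negative} power $-2/(q-1)$, hence the inequality direction is preserved — and rewriting $\big(\|D\varphi\|_2^2+\sigma\|\varphi\|_2^2\big)/(\sigma\|\varphi\|_2^2)=1+\|D\varphi\|_2^2/(\sigma\|\varphi\|_2^2)$, one obtains precisely \eqref{cq2} via $c_{q,\sigma}\le g(t_*)$.

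There is no real obstacle here: the argument is a clean computation. The only points deserving attention are (i) quoting correctly from Lemma~\ref{lem_esist_solu} why $c_{q,\sigma}\le\max_{t\ge0}g(t)$ (the fibering/Nehari argument), (ii) checking well-definedness and finiteness of $\int_\Omega\varphi^2\log\varphi^2\,dx$, and (iii) not mixing up the direction of the inequality when substituting a lower bound into a quantity raised to a negative power.
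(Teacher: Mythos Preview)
Your proof is correct and follows essentially the same approach as the paper: compute the optimal fibering parameter $t_*$ (the paper calls it $\bar t$) so that $t_*\,|\varphi|$ lies on the Nehari set, evaluate $J_{q,\sigma}$ there, and then bound $\|\varphi\|_{q+1}^{q+1}$ from below via $e^s\ge 1+s$ applied to $|\varphi|^{q-1}=e^{(q-1)\log|\varphi|}$. The only cosmetic differences are your use of the shorthand $A,B$ and your explicit care about the sign of the exponent when substituting the lower bound; the paper carries the Rayleigh-type quantity $\lambda=\|D\varphi\|_2^2/\|\varphi\|_2^2$ instead, but the algebra and the final estimate are identical.
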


\begin{proof}
Fix $p\in \ ]2, 2^*[$ and, correspondingly, $C_p>0$ such that
\[
t^2\,|\log t^2|\le C_p\max \{1, |t|^{p}\}.
\]
 Since $\Omega$ is bounded, H\"older and Sobolev inequality ensure that any $\varphi\in W^{1,2}_0(\Omega)\setminus\{0\}$ fulfils $\varphi^2\,\log\varphi^2\in L^1(\Omega)$. 
For any such $\varphi$ obeying \eqref{cond}, set
\[
\lambda: =\frac{\|D \varphi\|_2^2}{\|\varphi\|_2^2}, \quad \bar t: =\left[\left(1+\frac{\lambda}{\sigma} \right)\frac{\|\varphi\|_2^2}{\|\varphi\|_{q+1}^{q+1}}\right]^{\frac{1}{q-1}}
\]
one obtains $\bar t\, |\varphi| \in {\mathcal N}_{q, \sigma}$ and thus
\beq
\label{cqs1}
c_{q, \sigma}\le J_{q, \sigma}(\bar t\, \varphi)=\bar t^2\, \|\varphi\|_2^2\, (\lambda+\sigma)\left(\frac{1}{2}-\frac{1}{q+1}\right). 
\eeq
Being $e^s-1\ge s$ for all $s\in \R$, we have
\[
\begin{split}
\frac{\|\varphi\|_{q+1}^{q+1}}{\|\varphi\|_2^{2}}&= 1+\frac{\displaystyle{\int_\Omega \varphi^2 \, (|\varphi|^{q-1}-1)\, dx}}{\|\varphi\|_2^2}=1+\frac{\displaystyle{\int_{\Omega\cap \{\varphi \neq 0\}} \varphi^2 \, (e^{(q-1)\, \log|\varphi| 
}-1)\, dx}}{\|\varphi\|_2^2}\\
&\ge 1+\frac{q-1}{2}\frac{\displaystyle{\int_\Omega \varphi^2 \, \log\varphi^2\, dx}}{\|\varphi\|_2^2}.
\end{split}
\]
Inserting this estimate into $\bar t$ in \eqref{cqs1}, we obtain
\[
 c_{q, \sigma}\le \|\varphi\|_2^2\, (\lambda+\sigma)\, \frac{q-1}{2\, q+2}\left(1+\frac{\lambda}{\sigma}\right)^{\frac{2}{q-1}}\left[1+\frac{q-1}{2}\frac{\displaystyle{\int_\Omega \varphi^2\log\varphi^2\, dx}}{\|\varphi\|_2^2}\right]^{-\frac{2}{q-1}}
\]
 for all $q>1$ and $\varphi\in W^{1,2}_0(\Omega)\setminus\{0\}$ such that the last factor is positive, which amounts to \eqref{cond}. Recalling the definition of $\lambda$ we finally get \eqref{cq2}.
 \end{proof}

\subsection{Uniform bounds} 

This section is devoted to the proof of suitable a-priori estimates for solutions of \eqref{LEeq}, which will have multiple applications throughout the paper.

We first recall some known regularity estimates up to the boundary in convex domains, which may be bounded or unbounded.
\begin{lemma}
\label{regbconv}
Let $\Omega\subseteq \R^N$ be open and convex, $f\in C^{0}(\R)$
and $v\in C^2(\Omega)\cap C^0(\overline{\Omega})$ be a classical bounded solution of
\[
\begin{cases}
-\Delta v= f(v)&\text{in $\Omega$}\\
v=0&\text{on $\partial\Omega$}.
\end{cases}
\]
If $M$, $\gamma>0$ are such that 
\beq
\label{defgamma}
\|v\|_\infty \leq M, \quad \|f(v)\|_{\infty} \leq \gamma,
\eeq
 there exist $\alpha\in \ ]0, 1[$ and $C$ depending on $N$, $M$ and $\gamma$, such that 
$$\|v\|_{C^\alpha(\overline{\Omega})}\le C.$$
\end{lemma}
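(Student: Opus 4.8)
The plan is to reduce, as is standard, to an interior elliptic estimate combined with a boundary barrier; the whole point is that \emph{convexity makes both of these uniform}, i.e.\ independent of the smoothness and of the local geometry of $\partial\Omega$. Throughout we treat $g:=f(v)$ simply as a given function with $\|g\|_\infty\le\gamma$, so the statement is really about bounded solutions of $-\Delta v=g$ in a convex $\Omega$, with $v=0$ on $\partial\Omega$, $\|v\|_\infty\le M$ and $\|g\|_\infty\le\gamma$.

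\emph{Interior estimate.} For $x\in\Omega$ set $d(x):=\mathrm{dist}(x,\partial\Omega)$ and $r(x):=\tfrac12\min\{d(x),1\}$. Splitting $v$ on $B_{r(x)}(x)$ as a harmonic function plus the Newtonian potential of $g$ (extended by $0$), the classical gradient estimate for harmonic functions together with $|\nabla N_g(x)|\le C(N)\,r(x)\,\|g\|_\infty$ yields
\[
|\nabla v(x)|\le C(N)\Big(\frac{1}{r(x)}\,\sup_{B_{r(x)}(x)}|v|+\gamma\Big);
\]
this is the only place classical elliptic regularity enters, and it is already uniform.

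\emph{Boundary barrier.} Fix $x_0\in\partial\Omega$. By convexity there is a supporting hyperplane at $x_0$; after a rigid motion we may assume $x_0=0$ and $\Omega\subseteq\{x_N<0\}$, so that $\widetilde\Omega:=\Omega\cap B_1(0)$ is a bounded convex set contained in the half-ball $B_1^-:=B_1(0)\cap\{x_N<0\}$. Let $\beta$ be the (Perron) harmonic function on $B_1^-$ with boundary value $1$ on the spherical part $\{|x|=1,\ x_N<0\}$ and $0$ on the flat part $\{|x|<1,\ x_N=0\}$; then $0\le\beta\le1$, and since $\beta$ extends harmonically across the flat face by odd reflection and vanishes on $\{x_N=0\}$ near $0$, the interior gradient estimate gives $\beta(x)\le C(N)\,|x|$ for $|x|\le\tfrac12$. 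Put $\Phi(x):=\tfrac{\gamma}{2}(-x_N)(1+x_N)$ and $\phi:=M\beta+\Phi$. A direct computation gives $-\Delta\Phi=\gamma$ and $0\le\Phi(x)\le\tfrac{\gamma}{2}|x|$ on $\{-1\le x_N\le0\}$, hence $-\Delta\phi=\gamma\ge\|g\|_\infty$ in $\widetilde\Omega$. Moreover $\phi\ge0$ on $\partial\Omega\cap\overline{B_1(0)}$ (where $v=0$), while on $\Omega\cap\partial B_1(0)$ one has $x_N<0$, so such points lie on the spherical part of $\partial B_1^-$, where $\beta=1$, giving $\phi\ge M\ge|v|$ there. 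Applying the weak maximum principle to $\pm v-\phi$ on $\widetilde\Omega$ — which requires no regularity of $\partial\widetilde\Omega$ — we obtain $|v(x)|\le\phi(x)\le C(N,M,\gamma)\,|x-x_0|$ for every $x\in\Omega$ with $|x-x_0|\le\tfrac12$.

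\emph{Conclusion.} We feed the boundary bound back into the interior estimate. If $d(x)\le\tfrac13$, then every $y\in B_{r(x)}(x)$ satisfies $d(y)\le d(x)+r(x)\le\tfrac32 d(x)\le\tfrac12$, so $|v(y)|\le C\,d(y)\le\tfrac32 C\,d(x)$ by the boundary bound (applied at the point of $\partial\Omega$ nearest to $y$), whence $|\nabla v(x)|\le C(N,M,\gamma)$; if $d(x)>\tfrac13$, then $r(x)\ge\tfrac16$ and directly $|\nabla v(x)|\le C(N)(6M+\gamma)$. Thus $|\nabla v|\le C(N,M,\gamma)$ throughout $\Omega$, and since $\Omega$ is convex any two of its points are joined by a segment contained in $\Omega$, so $v$ is globally Lipschitz with this constant; passing to $\overline\Omega$ by continuity gives $\|v\|_{C^{0,1}(\overline\Omega)}\le C(N,M,\gamma)$, in particular the stated $C^\alpha$ bound for any fixed $\alpha\in\,]0,1[$. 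The main obstacle is precisely this uniformity: the barrier must be constructed with no reference to the shape of $\partial\Omega$, which is why convexity is indispensable — supporting hyperplanes provide a uniform exterior half-space at every boundary point, and straight segments give the final Lipschitz bound; the only mild subtlety is the ``outer'' part $\Omega\cap\partial B_1(x_0)$ of $\partial\widetilde\Omega$, where $v$ is merely bounded rather than zero, which is handled by the $M\beta$ term. (Alternatively, one may invoke boundary De Giorgi--Nash--Moser estimates, convexity yielding a uniform exterior cone condition.)
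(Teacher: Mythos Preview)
Your proof is correct and in fact yields a stronger conclusion than stated: a global Lipschitz bound rather than merely $C^\alpha$. The paper proceeds differently: it invokes Ladyzhenskaya--Ural'tseva's boundary De Giorgi--Nash--Moser estimate, observing that convexity guarantees the uniform exterior density condition $|\Omega\cap B_r(x_0)|/|B_r|\le 1/2$ at every $x_0\in\partial\Omega$, which yields $C^\alpha$ for some implicit $\alpha=\alpha(N)$. This is exactly the alternative you mention in your final parenthetical remark.

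The two routes trade off as follows. Your barrier argument exploits convexity more sharply: a supporting hyperplane gives an entire exterior \emph{half-space}, not just a cone or a density bound, and this is what upgrades H\"older to Lipschitz. The construction is elementary and self-contained. The paper's route via De Giorgi--Nash--Moser is less sharp here (only $C^\alpha$) but is more robust: it would apply verbatim to, say, divergence-form equations with merely bounded measurable coefficients or to weak solutions, where explicit barriers and interior gradient estimates are unavailable. For the semilinear Laplacian at hand, your approach is the cleaner one.
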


\begin{proof}
By \cite[Ch.\,3, Lemma 14.1]{LaUr68} any such solution $v$ belongs to the class ${\cal B}_2(\overline{\Omega}, M, \gamma+1, \infty, 0)$ (see \cite[Ch.\,2, Sec.\,7]{LaUr68}), where $M$ and $\gamma$ are any numbers fulfilling \eqref{defgamma}.
By convexity, for any $x_0\in \partial\Omega$, $\Omega$ is contained in a suitable half space through $x_0$, so that for any $r>0$
\[
\frac{|\Omega\cap B_r(x_0)|}{|B_r|}\le \frac{1}{2}.
\]
Thus condition (A) of \cite[p.\,6]{LaUr68} is fulfilled with parameters independent of $f$ and $v$ and \cite[Ch.\,2, Theorem 7.1]{LaUr68} ensures that 
\[
\|v\|_{C^\alpha(\overline{\Omega}\cap B_1(x))}\le C
\]
for constants $\alpha$ and $C$ as in the statement but independent of $x$. 
The global $C^\alpha(\overline{\Omega})$ bound follows from the latter and the boundedness of $v$.
\end{proof}
 
Recall that the {\em eccentricity} of a bounded convex body $\Omega$ is defined by 
\[
{\rm ecc}\, (\Omega):=\frac{\inf\{ R>0: \Omega\subseteq B_R(x) \hbox{ for some $x \in \Omega$\}}}{\sup\{r>0: B_r(x)\subseteq \Omega \hbox{ for some $x \in \Omega$}\}}.
\]
We are now ready to prove our uniform bounds. 

\begin{lemma}[Uniform estimates]
\label{lem_apriori_est}
Let $\bar\sigma>0$, $\bar q\in [1, 2^*-1[$, $\bar R, \bar\theta>0$. 
Then there exist positive constants $C_1(N, \bar q, \bar \sigma, \bar R, \bar \theta)$ and $C_2(N, \bar q, \bar R, \bar\theta)$ such that
\begin{enumerate}
\item
any solution $u_{q, \sigma}$ of \eqref{LEeq} for $\sigma\ge\bar\sigma$, $q\in [1, \bar q]$ in a convex domain $\Omega$ with 
\beq
\label{bome}
{\rm diam}\,(\Omega)\ge 2\, \bar R, \qquad {\rm ecc}\, (\Omega)\le \bar\theta
\eeq
satisfies 
\beq
\label{ape1}
1<\|u_{q, \sigma}\|_\infty^{q-1}\le C_1(N, \bar q, \bar \sigma, \bar R, \bar \theta);
\eeq
\item
 any solution $u_{q, \sigma}$ to \eqref{LEeq} with $q\in [1, \bar q]$ and $\sigma\ge 1/(q-1)$ in a convex domain $\Omega$ fulfilling \eqref{bome} satisfies 
\beq
\label{ape2}
 \|u_{q, \sigma}\|_\infty \le C_2(N, \bar q,\bar R, \bar \theta).
\eeq
\end{enumerate}
\end{lemma}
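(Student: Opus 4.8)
The overall plan is a contradiction-and-blow-up argument in the spirit of \cite{GiSp81}, with the resulting limiting problems ruled out by the Liouville Theorem \ref{thm_main_liouville} on convex epigraphs and by the classical Liouville theorems on $\R^N$. The lower bound in \eqref{ape1} is elementary and self-contained: if $u=u_{q,\sigma}$ solved \eqref{LEeq} with $\|u\|_\infty^{q-1}\le 1$, then $u^q\le u$ in $\Omega$, so $\Delta u=-\sigma(u^q-u)\ge 0$ and $u$ is subharmonic; since $\Omega$ is bounded and $u$ vanishes on $\partial\Omega$, the maximum principle forces $u\le 0$, a contradiction. (For $q=1$ there is no positive solution, so the statement is vacuous there, and for $q>1$ the same argument in fact gives $\|u\|_\infty>1$.)

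For the upper bound in (i) I would argue by contradiction, assuming parameters $q_n\in[1,\bar q]$, $\sigma_n\ge\bar\sigma$, convex domains $\Omega_n$ satisfying \eqref{bome}, and solutions $u_n$ with $L_n:=\|u_n\|_\infty$ and $M_n:=L_n^{q_n-1}\to+\infty$. Picking $x_n$ with $u_n(x_n)=L_n$ and setting $\mu_n^2:=(\sigma_nM_n)^{-1}$, $v_n(y):=u_n(x_n+\mu_n y)/L_n$ on $\Omega_n^*:=(\Omega_n-x_n)/\mu_n$, one gets $0\le v_n\le 1$, $v_n(0)=1=\max v_n$, $v_n=0$ on $\partial\Omega_n^*$, and $-\Delta v_n=v_n^{q_n}-v_n/M_n$, an equation whose reaction is bounded by $2$ on $0\le v_n\le 1$. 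The hypothesis $\sigma_n\ge\bar\sigma$ forces $\mu_n\to 0$, while \eqref{bome} gives a uniform inradius bound $\ge\bar R/\bar\theta$, so the convex sets $\Omega_n^*$ contain balls of radius $\to+\infty$. Here Lemma \ref{regbconv} is crucial, precisely because it is valid in \emph{convex} domains with constants \emph{independent of the smoothness and curvature of $\partial\Omega_n$}: it yields a uniform $C^\alpha(\overline{\Omega_n^*})$ bound on $v_n$, upgraded to uniform $C^{2,\alpha}_{\rm loc}$ bounds by interior Schauder. Passing to a subsequence, two cases arise. If $\operatorname{dist}(x_n,\partial\Omega_n)/\mu_n\to+\infty$, then $\Omega_n^*\to\R^N$ and $v_n\to v$ in $C^2_{\rm loc}$ with $v>0$ bounded solving $-\Delta v=v^{q_*}$ on $\R^N$ (where $q_*:=\lim q_n\in[1,\bar q]$, $v(0)=1$, the linear term dropping out since $M_n\to+\infty$), contradicting Gidas--Spruck since $q_*\le\bar q<2^*-1$ (the borderline $q_*=1$ excluded by the non-existence of bounded positive solutions of $-\Delta v=v$ on $\R^N$). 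If instead $\operatorname{dist}(x_n,\partial\Omega_n)/\mu_n$ stays bounded, I re-centre at the nearest boundary point and, using compactness of convex sets together with the convex-analytic results of Appendix \ref{sec_lipschit_epi}, obtain in the limit an entire convex open epigraph $H$ and a bounded $v>0$ solving $-\Delta v=v^{q_*}$ in $H$ with $v=0$ on $\partial H$ --- directly contradicting Theorem \ref{thm_main_liouville}.

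For (ii) one runs the same blow-up aiming at $\|u_n\|_\infty=L_n\to+\infty$; since (i) already bounds $\|u_n\|_\infty$ whenever $q_n$ is bounded away from $1$, one may assume $q_n\to 1$, hence $\sigma_n\ge(q_n-1)^{-1}\to+\infty$, and then $\mu_n^2\le(q_n-1)/M_n\le q_n-1\to 0$ still forces $\mu_n\to 0$. The new feature is that the limiting equation may retain a linear term: with $M_*:=\lim M_n\in[1,+\infty]$ the limit profile solves $-\Delta v=(1-M_*^{-1})v$ on $\R^N$ or on a convex epigraph $H$. If $M_*=+\infty$ this is $-\Delta v=v$ and we finish as above; if $1<M_*<+\infty$ it is $-\Delta v=cv$ with $c>0$, which has no bounded positive solution on $\R^N$ nor on a convex epigraph with zero Dirichlet data (testing against the first Dirichlet eigenfunctions of balls $B_R$ with $R\to+\infty$, the solution being extended by zero in the epigraph case); if $M_*=1$, on an epigraph we would get a bounded non-negative harmonic $v$ vanishing on $\partial H$, hence $v\equiv 0$ by Phragm\'en--Lindel\"of, against $v\not\equiv 0$, so only the flat limit $v\equiv 1$ on $\R^N$ survives. \emph{Excluding this last, borderline scenario --- an interior blow-up with $(q_n-1)\log L_n\to 0$ and $v_n\to 1$ --- is the main obstacle.} I would handle it by a second-order expansion, setting $w_n:=L_n(1-v_n)\ge 0$ with $w_n(0)=0$: the rescaled equation passes to the limit into $-\Delta w=\ell$ with $\ell=\lim L_n(M_n-1)\ge 0$, which is incompatible with $w\ge 0$ attaining an interior minimum at $0$ unless $\ell=0$; in the remaining degenerate sub-case one iterates the expansion, or invokes the energy estimate of Lemma \ref{lem_energ_estim} together with $\sigma_n(q_n-1)\ge 1$, to close the argument. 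Combined with (i), this yields the uniform bound \eqref{ape2}.
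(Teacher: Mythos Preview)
Your treatment of part (i) is correct and essentially the paper's argument; the only cosmetic differences are the lower bound (you use subharmonicity, the paper tests the equation with $u$) and that the paper packages the interior/boundary dichotomy into the single Proposition~\ref{prop_blowup_domain}.

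Part (ii), however, has a genuine gap in the borderline case $M_*=1$. First, your second-order expansion $w_n=L_n(1-v_n)$ is not justified: you have no a priori local bound on $w_n$ (it is an $\infty\cdot 0$ expression), and $L_n(M_n-1)\approx L_n(q_n-1)\log L_n$ need not be bounded, so there is no limit equation to speak of. Second, even granting convergence, the sign is wrong: from $-\Delta v_n=v_n^{q_n}-v_n/M_n$ one computes $-\Delta w_n=L_n(v_n/M_n-v_n^{q_n})\to -\ell$, i.e.\ $\Delta w=\ell\ge 0$, so $w$ is \emph{sub}harmonic with an interior minimum at $0$, which is perfectly compatible with $w\ge 0$ (e.g.\ $w=\ell|x|^2/(2N)$); the strong minimum principle you invoke would require superharmonicity. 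Finally, the fallback to Lemma~\ref{lem_energ_estim} is spurious, since that lemma bounds the ground-state level $c_{q,\sigma}$, while (ii) concerns arbitrary solutions.

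The paper avoids all of this by proving the sharper intermediate bound $\|u_q\|_\infty^{q-1}\le 1+C(q-1)$ (which yields $\|u_q\|_\infty\le e^C$). Writing $M_n:=\|u_n\|_\infty$, one assumes by contradiction $(M_n^{q_n-1}-1)/(q_n-1)\to+\infty$ and rescales with $\lambda_n^2\sigma_n(M_n^{q_n-1}-1)=1$ instead of your $\mu_n^2\sigma_nM_n^{q_n-1}=1$. The rescaled equation is
\[
-\Delta v=\frac{(q_n-1)\,M_n^{q_n-1}}{M_n^{q_n-1}-1}\cdot\frac{v^{q_n}-v}{q_n-1}+v,
\]
and since $M_n^{q_n-1}$ is bounded by part (i) while $(M_n^{q_n-1}-1)/(q_n-1)\to+\infty$, the coefficient in front tends to $0$ and the limit problem is exactly $-\Delta v=v$, $0\le v\le 1=v(0)$, on $\R^N$ or a convex epigraph. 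Theorem~\ref{thm_main_liouville} with $q=1$ then finishes the proof without any case splitting.
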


\begin{proof}
First we observe that, testing \eqref{LEeq} with $u_{q,\sigma}$ yields
\[
 \int_\Omega |D u_{q, \sigma}|^2\, dx=\sigma \int_\Omega \left(u_{q, \sigma}^{q+1}-u_{q,\sigma}^2\right)\, dx
\]
 so that it cannot hold $\|u_{q, \sigma}\|_\infty\le 1$, since otherwise the right hand side would be non-positive, forcing $u_{q, \sigma}\equiv 0$. 
Thus $\|u_{q, \sigma}\|_\infty> 1$.
We prove {\em (i)} by contradiction. 

\smallskip
{\em Step 1: setting up the blow-up}.\\
Consider a sequence $u_n$ solving \eqref{LEeq} for $q_n\in [1, \bar q]$, $\sigma_n\ge \bar\sigma$ in convex domains $\Omega_n$ fulfilling \eqref{bome} 
 but such that 
\beq
\label{Minf}
 \|u_n\|_\infty^{q_n-1}\to \infty.
\eeq
By taking a subsequence, we may assume $q_n\to q\in [1, \bar q]$.
By considering $u_n(\alpha_n\, (\cdot-\bar x_n))$, for $\bar x_n$ being the centre of a ball containing $\Omega_n$ of minimal radius and $\alpha_n=2\, \bar R/{\rm diam}\, (\Omega_n)$, we can suppose that ${\rm diam}\, (\Omega_n)=2\, \bar R$ for all $n$, since in doing so the corresponding factor $\sigma$ in \eqref{LEeq} does not decrease.
By translation invariance of the equation, we can also suppose that 
\beq
\label{condpalle}
B_{\bar R/\bar\theta}(0)\subseteq \Omega_n\subseteq B_{2\bar R}(0)\qquad \text{ for all $n$}.
\eeq
Choose $x_n\in \Omega_n$ such that
\[
M_n:=\|u_n\|_\infty=u_n(x_n)
\]
so that $M_n>1$ for all $n$. 
For $\lambda>0$ the function
\beq
\label{defvnl}
 v_{n, \lambda}(x) :=\frac{1}{M_n} u_n(x_n+\lambda\, (x-x_n)), 
 \eeq
 solves 
 \[
 -\Delta v= \lambda^2\, \sigma_n\, M_n^{q_n-1}\, v^{q_n}-\lambda^2\, \sigma_n\, v
 \]
 in $ \left(\Omega_n-x_n\right)/\lambda$ 
 with Dirichlet boundary conditions and satisfies 
 \[
 0\le v_{n, \lambda}\le 1=v_{n, \lambda}(0).
 \] 
 Choose $\lambda=\lambda_n>0$ obeying 
 \[
 \lambda_n^2 \, \sigma_n\, M_n^{q_n-1} =1,
 \]
 so that $v_n:=v_{n, \lambda_n}$ solves
 \beq
 \label{eqsc}
 -\Delta v= v^{q_n}-\frac{v}{M_n^{q_n-1}}=: f_n(v), \qquad 0\le v\le 1=v(0)
 \eeq
 in $\widetilde{\Omega}_n :=\left(\Omega_n-x_n\right)/\lambda_n$. 
From \eqref{Minf}, the bound $\sigma_n\ge \bar\sigma>0$ and the definition of $\lambda_n$ we infer that $\lambda_n\to 0$. 
Moreover, for $t\in [0, 1]$, an explicit computation yields
 \[
 \frac{1-q_n}{M_n^{q_n}\, q_n^{\frac{q_n}{q_n-1}}}\le f_n(t)\le 1
 \]
so that, noting that $q^{q/(q-1)}\ge e$ for $q\in \ ]1, \bar q]$, it holds
\[
\frac{1-q_n}{e\, M_n^{q_n}}\le f_{n}(t)\le 1, \qquad \forall t\in \ [0, 1].
\]
By the assumption $M_n^{q_n-1}\to +\infty$ and $q_n\le \bar q$, we infer that
 \[
 \sup_{t\in [0, 1]} |f_n(t)|\le \gamma<\infty
 \]
 for a constant $\gamma$ independent of $n$. 
Applying Lemma \ref{regbconv}, we thus find $\alpha\in \ ]0, 1[$ and $C>0$ independent of $n$ such that 
 \beq
 \label{equilip}
 \|v_n\|_{C^\alpha(\R^N)} \le C,
 \eeq
 where we extended each $v_n$ as zero outside $\widetilde{\Omega}_n$.
 
 \smallskip
 {\em Step 2: convergence}.\\
 Thanks to \eqref{condpalle}, Proposition \ref{prop_blowup_domain} ensures that there exists a not relabelled subsequence such that $\widetilde{\Omega}_n\to H$ locally in the Hausdorff sense, where $H$ is either $\R^N$ or the epigraph of a convex function. 
By \eqref{equilip} and Ascoli-Arzelà theorem, we may suppose that $(v_n)$ has a not relabelled subsequence converging locally uniformly to some $v$ in $C^{\alpha}(\R^N)$ fulfilling $0\le v\le 1=v(0)$ and $v\equiv 0$ in $\R^N\setminus H$.
 Using \eqref{Minf} to pass to the limit in \eqref{eqsc}, standard arguments ensure that $v$ solves 
\[
-\Delta v=v^q \qquad 0\le v\le 1=v(0)
\]
in $H$ and $v=0$ on $\partial H$. If $H=\R^N$ this is impossible due to \cite[Theorem 8.1]{QuSo19}, while if $H$ is the epigraph of a convex function Theorem \ref{thm_main_liouville} gives again a contradiction, concluding the proof of the first statement. We then proceed to prove the second one.

\smallskip
{\em Step 3: proof of (ii)}.\\
We claim that for any solution $u_{q, \sigma}$ of \eqref{LEeq} in some convex $\Omega$ fulfilling \eqref{bome} with $q\in [1, \bar q]$ and $\sigma\ge 1/(q-1)$, it holds
\beq
\label{ape3}
\|u_{q, \sigma}\|_\infty^{q-1}\le 1+ C(N, \bar q, \bar R, \bar \theta)\, (q-1).
\eeq
Suppose this is not true for sequences $(q_n)$, $(\sigma_n)$, $(\Omega_n)$ as above and, with the previous notations, solutions 
\[
u_n:=u_{q_n, \sigma_n}.
\]
This amounts to 
\beq
\label{contra}
\lim_n\frac{\|u_n\|_\infty^{q_n-1}-1}{q_n-1}=+\infty 
\eeq
and we can again assume \eqref{condpalle} by eventually increasing $\sigma_n$.
Choose again $x_n\in \Omega_n$ such that $\|u_n\|_\infty=M_n=u_{n}(x_n)$. For $\bar\sigma:=1/(\bar q-1)$, it always holds $\sigma_n\ge 1/(q_n-1)\ge \bar\sigma$ and by the previous point \eqref{ape1} holds true for $C=C_1(N, \bar q, 1/(\bar q-1), \bar R, \bar\theta)$, i.\,e.
\beq
\label{label}
M_n^{q_n-1}\le C(N, \bar q, \bar R, \bar \theta).
\eeq
 Given $\lambda>0$, define $v_{n, \lambda}$ as in \eqref{defvnl} and rewrite the equation satisfied by $v_{n, \lambda}$ as 
 \[
-\Delta v=\lambda^2\, \sigma_n\, M_n^{q_n-1}\, (v^{q_n}-v) +\lambda^2\, \sigma_n\, (M_n^{q_n-1}-1) v.
\]
We choose $\lambda=\lambda_n$ fulfilling (recall that $M_n>1$)
\[
 \lambda_n^2\, \sigma_n\, (M_n^{q_n-1}-1)=1,
\]
so that $v_n:=v_{n, \lambda_n}$ solves
\beq
\label{eqsc2}
-\Delta v=\frac{(q_n-1) \, M_n^{q_n-1}}{M_n^{q_n-1}-1}\, \frac{v^{q_n}-v}{q_n-1} + v=:f_n(v)
\eeq
in $\widetilde{\Omega}_n=(\Omega_n-x_n)/\lambda_n$, as well as $0\le v_n\le 1=v_n(0)$.

Assumption \eqref{contra} and $\sigma_n\ge 1/(q_n-1)$ force $\lambda_n\to 0$, while \eqref{contra} and \eqref{label} ensure
\beq
\label{Minf2}
\frac{(q_n-1) \, M_n^{q_n-1}}{M_n^{q_n-1}-1}\to 0.
\eeq
 An elementary computation similar to the one in Step 1 shows that for $t\in [0, 1]$
\[
\frac{1}{e\, M_n}
\frac{1-q_n}{M_n^{q_n-1}-1}\le f_n(t)\le 1
\]
hence by \eqref{contra} and $M_n\ge 1$, we find that \eqref{equilip} holds true again. 
Moreover, the elementary inequality
\[
0\ge \frac{t^q-t}{q-1}\ge -q^{\frac{q}{1-q}}\ge -\bar q^{\frac{\bar q}{1-\bar q}}
\]
holds for all $t\in [0, 1]$, $q\in [1, \bar q]$ and implies through \eqref{Minf2} that $f_n(t)\to t$ uniformly on $[0, 1]$. 
As in Step 2  we can select a not relabelled subsequence and pass to the limit in \eqref{eqsc2} using \eqref{Minf2}, to get that $v_n$ converges to a solution of 
\[
-\Delta v=v, \qquad 0\le v\le 1= v(0)
\]
on $\R^N$ or in the epigraph $H$ of a convex entire function, in which case $v=0$ on $\partial H$. 
This again contradicts Theorem \ref{thm_main_liouville}, completing the proof of \eqref{ape3}. 
Finally, \eqref{ape3} rewrites as
\[
\|u_{q, \sigma}\|_\infty\le {\rm exp}\left[ \frac{\log(1+C\, (q-1))}{q-1}\right]
\]
for a constant $C=C(N, \bar q, \bar R, \bar\theta)$. Since $t\mapsto \log(1+t)/t$ is decreasing and bounded by $1$ on $]0, +\infty[$, we infer $\|u_{q, \sigma}\|_\infty\le e^C$, proving \eqref{ape2}. 
 \end{proof}

\begin{remark}
\label{rem_simplification}
As already mentioned, the uniformity with respect to the domain is here obtained in order to extend our result to a general, possibly not regular convex $\Omega$. If one is interested in a {\em fixed} smooth domain $\Omega$, the above proof simplifies. 
Indeed, if $\partial \Omega\in C^1$, then standard theory implies that $\widetilde{\Omega}_n$ converge to the half space or to the entire space, thus more standard Liouville theorems apply. 
Still, the so-obtained a-priori bound will depend unexplicitly on $\Omega$, and may in principle blow-up when approximating non-smooth convex domains with smooth ones. 
\end{remark}

 \section{Asymptotic behaviour of Lane-Emden}
 \label{sec_ALE}
 In this section we will construct a connected branch of solutions to \eqref{LEeq} and then show the behaviour of solutions to \eqref{LEeq} when $q\to 1^+$ and $\sigma$ is considered fixed or varying with the law $\sigma=2/(q-1)$.

 \subsection{Convergence to the first eigenfunction}
 \label{sec_conv_eig}
 
We show now that the ground states of \eqref{LEeq} for {\em fixed} $\sigma$ and $q \to 1^+$ 
converge up to normalisation to the first eigenfunction of the Laplacian. 

\begin{proposition}[Convergence to eigenfunction]
\label{prop_conv_eigenf}
Let $\Omega$ be bounded and convex, $\sigma>0$, and $u_n$ a solution of \eqref{LEeq} for a sequence $q_n\to 1^+$. 
Denote by $\varphi_1$ the first positive eigenfunction of the Dirichlet Laplacian normalised so that $\|\varphi_1\|_\infty=1$ and by $\lambda_1$ the corresponding first eigenvalue. 
Then 
\begin{align}
&\label{limv1}
\frac{u_{n}}{\|u_n\|_\infty} \to \varphi_1\quad \text{ in $C^2_{\rm loc}(\Omega)\cap C^0(\overline{\Omega})$},
\\&\label{limMq}
 \|u_n\|_\infty^{q_n-1} \to 1+\frac{\lambda_1}{\sigma},
\\& \label{limuq-1}
u_n^{q_n-1}\to 1+\frac{\lambda_1}{\sigma}\quad \text{ in $C^0_{{\rm loc}}
(\Omega)$}.
\end{align}
Moreover, if $\partial\Omega$ is smooth, then the convergence in \eqref{limv1} holds in $C^{2}(\overline{\Omega})$ as well.
\end{proposition}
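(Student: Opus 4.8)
The plan is to normalise and pass to the limit in the PDE, using the a-priori bounds of Lemma \ref{lem_apriori_est} to control $\|u_n\|_\infty$ and hence the coefficient $\sigma M_n^{q_n-1}$ that multiplies the (now roughly linear) nonlinearity. First I would set $M_n:=\|u_n\|_\infty$ and $w_n:=u_n/M_n$, so that $0<w_n\le 1=\max w_n$ and $w_n$ solves $-\Delta w_n=\sigma M_n^{q_n-1}w_n^{q_n}-\sigma w_n$ in $\Omega$ with zero boundary data. Since $\Omega$ is a fixed bounded convex domain, Lemma \ref{lem_apriori_est}(i) (with any $\bar\sigma\le\sigma$ and $\bar q>1$ above all $q_n$) gives $1<M_n^{q_n-1}\le C_1$, so along a subsequence $M_n^{q_n-1}\to\mu$ for some $\mu\in[1,C_1]$; also $q_n\to1$ forces $M_n^{q_n}=M_n^{q_n-1}M_n$, and $M_n\to1$ is \emph{not} expected — rather $M_n\to+\infty$ is possible, but only at the rate $M_n^{q_n-1}\to\mu$, which is exactly what we need since the nonlinearity only sees $M_n^{q_n-1}$. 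The right-hand side $\sigma M_n^{q_n-1}w_n^{q_n}-\sigma w_n$ is then bounded in $L^\infty$ uniformly in $n$, so by Lemma \ref{regbconv} (applied as in the proof of Lemma \ref{lem_apriori_est}) the $w_n$ are bounded in $C^\alpha(\overline\Omega)$, and then by interior elliptic estimates bounded in $C^{2,\alpha}_{\rm loc}(\Omega)$; Ascoli--Arzelà yields a subsequence with $w_n\to w$ in $C^0(\overline\Omega)\cap C^2_{\rm loc}(\Omega)$, where $0\le w\le1$, $w=0$ on $\partial\Omega$, and $w\not\equiv0$ because $w_n(x_n)=1$ with $x_n$ in a fixed compact (pass to a further subsequence so $x_n\to x_*\in\overline\Omega$; interior Harnack / the boundary estimate shows $x_*$ stays at positive distance from $\partial\Omega$, so $w(x_*)=1$). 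Passing to the limit in the equation, and using $w_n^{q_n}\to w$ uniformly on $\overline\Omega$ (since $q_n\to1$ and $0\le w_n\le1$), gives $-\Delta w=\sigma(\mu-1)w$ in $\Omega$, $w\ge0$, $w\not\equiv0$, $w=0$ on $\partial\Omega$.

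Next I would identify $w$ and $\mu$. Since $w\ge0$, $w\not\equiv 0$ solves a linear eigenvalue problem, it must be (a multiple of) the first Dirichlet eigenfunction and $\sigma(\mu-1)=\lambda_1$; normalising by $\|w_n\|_\infty=1$ forces $\|w\|_\infty=1$, hence $w=\varphi_1$ and $\mu=1+\lambda_1/\sigma$. This proves \eqref{limv1} along the subsequence, and since the limit is uniquely determined, the full sequence converges; it also gives \eqref{limMq}, i.e.\ $M_n^{q_n-1}\to1+\lambda_1/\sigma$. For \eqref{limuq-1}, write $u_n^{q_n-1}=M_n^{q_n-1}w_n^{q_n-1}$; on a fixed compact $K\Subset\Omega$ we have $w_n\to\varphi_1$ uniformly with $\varphi_1$ bounded below by a positive constant $c_K>0$ on $K$, so $w_n^{q_n-1}=\exp((q_n-1)\log w_n)\to1$ uniformly on $K$ since $(q_n-1)\log w_n\to0$ uniformly there; combined with \eqref{limMq} this yields $u_n^{q_n-1}\to1+\lambda_1/\sigma$ in $C^0_{\rm loc}(\Omega)$.

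Finally, for the smooth-boundary addendum: if $\partial\Omega\in C^{2,\alpha}$ then global Schauder estimates apply to $-\Delta w_n=g_n$ with $g_n:=\sigma M_n^{q_n-1}w_n^{q_n}-\sigma w_n$ bounded in $C^\alpha(\overline\Omega)$ (it is bounded in $L^\infty$, and $w_n$ bounded in $C^\alpha(\overline\Omega)$ makes $g_n$ bounded in $C^\alpha$ up to the boundary, using that $t\mapsto t^{q_n}$ is uniformly $C^{0,\alpha}$ on $[0,1]$ for $q_n$ near $1$), so $w_n$ is bounded in $C^{2,\alpha}(\overline\Omega)$ and the convergence in \eqref{limv1} upgrades to $C^2(\overline\Omega)$.

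The main obstacle is the control of $M_n=\|u_n\|_\infty$: a priori $M_n$ could blow up, and the whole argument hinges on the fact that it blows up slowly enough that $M_n^{q_n-1}$ stays bounded — this is precisely the content of Lemma \ref{lem_apriori_est}(i), whose proof (blow-up plus the Liouville Theorem \ref{thm_main_liouville}) is the technical heart this proposition rests on; a secondary point requiring care is ruling out concentration of the maximum points $x_n$ at $\partial\Omega$, so that the limit $w$ is nontrivial, which follows from the uniform boundary $C^\alpha$ estimate of Lemma \ref{regbconv} together with an interior lower bound.
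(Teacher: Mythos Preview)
Your proof is correct and follows essentially the same route as the paper's: normalise by $M_n=\|u_n\|_\infty$, use Lemma~\ref{lem_apriori_est}(i) to bound $M_n^{q_n-1}$, apply Lemma~\ref{regbconv} and interior Schauder to extract a converging subsequence, and identify the limit as $\varphi_1$ via the linear eigenvalue problem. One minor simplification: your concern about the maximum points $x_n$ possibly drifting to $\partial\Omega$ is unnecessary, since the $C^\alpha(\overline\Omega)$ bound already gives uniform convergence on $\overline\Omega$, hence $\|w\|_\infty=\lim\|w_n\|_\infty=1$ directly.
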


 \begin{proof} 
If $M_n:=\|u_n\|_\infty$, then $\bar u_n:=u_n/M_n$ solves
\[
-\Delta \bar u_n=\sigma\, \left(M_n^{q_n-1}\, \bar u_n^{q_n}-\bar u_n\right)=:g_n(x) \quad \hbox{in $\Omega$}.
\]
 By Lemma \ref{lem_apriori_est} we have, up to not relabelled subsequences, that $M_n^{q_n-1}\to M\ge 1$ and hence $(g_n)$ is uniformly bounded in $L^\infty(\Omega)$. 
Lemma \ref{regbconv} then ensures that $(\bar u_n)$ is bounded in $C^\alpha(\overline\Omega)$ for a suitable $\alpha\in \ ]0, 1[$. 
Since $[0, 2]\ni t\mapsto a\, t^q-b\, t$ is Lipschitz continuous, uniformly for bounded $a, b, q\ge 1$, we infer that $(g_n)$ is uniformly bounded in $C^\alpha(\overline{\Omega})$. 
Local elliptic estimates then ensure that $(\bar u_n)$ is bounded in $C^{2, \alpha}(\Omega')$ for any $\Omega'\Subset\Omega$. 
All in all, up to a not relabelled subsequence, we can suppose that $(\bar u_n)$ converges in $C^2_{\rm loc}(\Omega)\cap C^0(\overline{\Omega})$ to a non-negative solution $\bar u$ of 
 \[
 -\Delta \bar u=\sigma\, (M-1)\, \bar u \quad \hbox{in $\Omega$}
 \]
 with $\|\bar u\|_\infty=1$. In particular $\bar u\ne 0$ and by the maximum principle it must hold $M>1$. 
Hence $\bar u$ must be a first Dirichlet eigenfunction, i.e.$\;$$\bar u=\varphi_1$, proving \eqref{limv1}. 
Moreover, it must hold
 \[
 \sigma\, (M-1)=\lambda_1\quad \Longleftrightarrow\quad M=1+\frac{\lambda_1}{\sigma},
 \]
 giving \eqref{limMq}.
Assertion \eqref{limuq-1} follows from \eqref{limMq}, since 
 \[
 \frac{u_n^{q_n-1}}{M_n^{q_n-1}}=\bar u_n^{q_n-1}\to 1
 \]
 locally uniformly in $\Omega$ (here we use that $\bar u_n \to\varphi_1>0$). 
Finally, the smoothness of $\partial\Omega$ grants boundedness of $(\bar u_n)$ in $C^{2, \alpha}(\overline{\Omega})$ by the global $C^{2, \alpha}$-estimates for the Poisson equation, so that the previously proved convergence $\bar u_n\to \varphi_1$ improves to $C^2(\overline{\Omega})$ in this case.
\end{proof}

\subsection{Uniqueness and connected component of solutions}
\label{sec_uniq_conn}

 Given $\sigma>0$, it is unfortunately unknown whether the set of ground states 
 \[
 \big\{(q, u): q\in\ ]1, 2^*-1[, \ u\in \mathcal{GS}_{q, \sigma}\big\}\subseteq \ ]1, 2^*-1[\,\times W^{1,2}_0(\Omega)
 \]
is connected, a pivotal property to perform the final continuity argument. 
As mentioned in the Introduction, and as we show now in Proposition \ref{corol_uniq}, connectedness is certainly true when in the previous set we restrict $q$ to be sufficiently close to $1$. We can then resort to a degree argument to construct from there the sought connected component. 

\smallskip

The following uniqueness result has been proved in \cite{DGP99} when $\Omega$ is symmetric. With the same argument and with Proposition \ref{prop_conv_eigenf} at hand, we can remove the symmetry assumption.

\begin{proposition}[Uniqueness]
\label{corol_uniq}
Let $\Omega$ be bounded and convex and $\sigma>0$. 
Then there exists $q_0=q_0(\sigma, \Omega)>1$ such that for any $q\in \, ]1, q_0]$, \eqref{LEeq} has a unique solution.
\end{proposition}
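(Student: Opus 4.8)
The plan is to adapt the argument of \cite{DGP99}, which establishes uniqueness via a contradiction/compactness scheme, replacing their use of symmetry by the convergence result in Proposition \ref{prop_conv_eigenf}. Suppose the statement fails: then there is a sequence $q_n \to 1^+$ and, for each $n$, two distinct solutions $u_n \ne w_n$ of \eqref{LEeq} with parameters $q_n$, $\sigma$. The first step is to show that both sequences, after normalisation, converge to the first eigenfunction: since \emph{every} solution of \eqref{LEeq} for fixed $\sigma$ and $q_n \to 1^+$ behaves as in Proposition \ref{prop_conv_eigenf}, we get $u_n/\|u_n\|_\infty \to \varphi_1$ and $w_n/\|w_n\|_\infty \to \varphi_1$ in $C^2_{\mathrm{loc}}(\Omega) \cap C^0(\overline\Omega)$, together with $\|u_n\|_\infty^{q_n-1}, \|w_n\|_\infty^{q_n-1} \to 1 + \lambda_1/\sigma$. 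In particular $u_n$ and $w_n$ are uniformly close to each other once $n$ is large.

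The second step is a linearisation argument. Set $z_n := (u_n - w_n)/\|u_n - w_n\|_{L^2(\Omega)}$ (or an $L^\infty$ normalisation, whichever is more convenient for the elliptic estimates), which is well defined since $u_n \ne w_n$. Subtracting the two equations, $z_n$ solves a linear equation
\beq
-\Delta z_n = \sigma\, c_n(x)\, z_n \quad \text{in $\Omega$}, \qquad z_n = 0 \text{ on $\partial\Omega$},
\eeq
where $c_n(x) = q_n\int_0^1 (\theta u_n(x) + (1-\theta) w_n(x))^{q_n-1}\, d\theta - 1$ is the divided difference of $t \mapsto \sigma(t^{q_n}-t)$ along the segment joining $w_n(x)$ to $u_n(x)$. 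Using Step 1, $\theta u_n + (1-\theta)w_n$ has the same asymptotics as $u_n$, so $c_n \to \lambda_1/\sigma$ in $C^0_{\mathrm{loc}}(\Omega)$ and stays bounded in $L^\infty(\Omega)$; hence $\sigma c_n$ is bounded in $L^\infty$, Lemma \ref{regbconv} and interior elliptic estimates give compactness of $(z_n)$ in $C^2_{\mathrm{loc}}(\Omega)\cap C^0(\overline\Omega)$, and a subsequence converges to some $z$ with $\|z\| = 1$ solving $-\Delta z = \lambda_1 z$ in $\Omega$, $z = 0$ on $\partial\Omega$. Thus $z$ is a (nonzero) first Dirichlet eigenfunction, so $z$ has a sign, say $z > 0$ in $\Omega$, which forces $\int_\Omega z \,\varphi_1 > 0$.

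The third step extracts the contradiction from an orthogonality relation, exactly as in \cite{DGP99}. Testing the equation for $z_n$ against $\varphi_1$ and the eigenfunction equation $-\Delta\varphi_1 = \lambda_1\varphi_1$ against $z_n$ and subtracting yields
\beq
\int_\Omega \big(\sigma\, c_n(x) - \lambda_1\big)\, z_n\, \varphi_1\, dx = 0.
\eeq
Now one needs a sign on $\sigma c_n(x) - \lambda_1$: since $q > 1$ the map $t \mapsto \sigma(t^q - t)$ has strictly increasing slope, so its divided difference over an interval is \emph{strictly larger} than its derivative $\sigma(q t^{q-1}-1)$ evaluated at the left endpoint; quantitatively, using that $u_n, w_n$ concentrate near $1 + \lambda_1/\sigma$ (after the scaling built into $\|u_n\|_\infty^{q_n-1}\to 1+\lambda_1/\sigma$), one shows $\sigma c_n(x) - \lambda_1 \ge 0$ on $\Omega$ — or at least $\ge 0$ on a large compact set and $o(1)$-controlled near $\partial\Omega$ — with the inequality strict on a set of positive measure. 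Passing to the limit, or arguing directly at finite $n$, the integrand $(\sigma c_n - \lambda_1) z_n \varphi_1$ is then nonnegative and not identically zero, contradicting the displayed orthogonality. The main obstacle is making this last sign analysis rigorous \emph{uniformly up to the boundary}: near $\partial\Omega$ all of $u_n$, $w_n$, $z_n$, $\varphi_1$ vanish, $c_n$ is no longer close to $\lambda_1/\sigma$, and the naive convexity inequality may degenerate, so one must combine the Hopf-lemma behaviour of $\varphi_1$ and the solutions near $\partial\Omega$ with the precise rate in \eqref{limuq-1} — this is where Proposition \ref{prop_conv_eigenf}, and in particular its quantitative content on $u_n^{q_n-1}$, does the essential work, and it is the reason the proof needs $q$ within a $\sigma$- and $\Omega$-dependent neighbourhood of $1$ rather than a universal one.
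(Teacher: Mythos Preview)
Your first two steps are correct and match the paper's approach almost verbatim: set up the normalised difference $z_n$, derive the linearised equation with the divided-difference coefficient $c_n$, use Proposition \ref{prop_conv_eigenf} to get $c_n\to \lambda_1/\sigma$ locally and boundedly, and pass to a subsequential limit $z=\pm\varphi_1$.

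The gap is entirely in Step 3. The sign claim $\sigma c_n-\lambda_1\ge 0$ is false: near $\partial\Omega$ both $u_n$ and $w_n$ are small, so for fixed $q_n>1$ the quantity $q_n\,\xi^{q_n-1}-1$ (with $\xi$ between them) is close to $-1$, giving $\sigma c_n-\lambda_1\approx -\sigma-\lambda_1<0$. In the interior, \eqref{limuq-1} only gives $\sigma c_n-\lambda_1\to 0$, not a strict sign. So the integrand $(\sigma c_n-\lambda_1)\,z_n\,\varphi_1$ is sign-changing in $x$ and vanishing in $n$, and the orthogonality relation degenerates to $0=0$ in the limit --- no contradiction. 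The convexity inequality you invoke (divided difference exceeds the derivative at the left endpoint) is true but useless here, since that left endpoint can be arbitrarily small near $\partial\Omega$. You have correctly identified this boundary issue as ``the main obstacle'', but Hopf-type decay of $z_n\varphi_1$ does not rescue the argument: it makes the boundary contribution small, but the interior contribution is also small, and neither has a definite sign.

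The paper extracts the contradiction by a completely different route that avoids any sign analysis of $c_n$. First, the identity
\[
0=\int_\Omega u_n\,v_n\,(v_n^{q_n-1}-u_n^{q_n-1})\,dx
\]
(obtained by cross-testing the two equations) forces $u_n-v_n$ to be sign-changing for every $n$. Then, testing the linearised equation with $z_n^{\pm}$ and combining with the Poincar\'e inequality $\|z_n^{\pm}\|_2^2\le C\,|\{\pm z_n>0\}|^{2/N}\,\|Dz_n^{\pm}\|_2^2$ yields a uniform lower bound $|\{\pm z_n>0\}|\ge c>0$. This persists in the limit and contradicts the fact that $z=\pm\varphi_1$ has a sign. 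You should replace your Step 3 with this nodal-domain argument.
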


\begin{proof}
Suppose the claim is false and pick two sequences $(u_n)$ and $(v_n)$ of solutions of \eqref{LEeq} for suitable $q_n\to 1^+$ with $u_n\ne v_n$. 
We start by observing that, from 
\[
\begin{split}
0&=\int_\Omega u_n\, (-\Delta v_n+\sigma v_n)- v_n(-\Delta u_n+\sigma u_n)\, dx\\
&=\sigma \int_\Omega u_n\, v_n\, (v_n^{q_n-1}-u_n^{q_n-1})\, dx,
\end{split}
\]
the function $v_n-u_n$ must be sign changing. 
The functions 
\[
w_n:=\frac{v_n-u_n}{\|v_n-u_n\|_\infty}
\]
fulfil
\beq
\label{eqwn}
-\Delta w_n +\sigma\, w_n=\sigma\, g_n\, w_n, \qquad w_n\lfloor_{\partial\Omega}\equiv 0
\eeq
where
\[
g_n(x):=
\begin{cases}\dfrac{v_n^{q_n}(x)-u_n^{q_n}(x)}{v_n(x)-u_n(x)}&\text{if $v_n(x)\ne u_n(x)$}\\
1+\lambda_1/\sigma&\text{if $ v_n(x)=u_n(x)$}.
\end{cases}
\]
Let $x$ be such that $w_n(x)\ne 0$. 
Then by the intermediate value theorem, $g_n(x)= q_n\, \xi_n(x)^{q_n-1}$ for some $\xi_n(x)$ in the interval with extrema $v_n(x)$ and $u_n(x)$. 
Since both $\|v_n\|_\infty^{q_n-1}$ and $\|u_n\|_{\infty}^{q_n-1}$ are uniformly bounded in $n$ by \eqref{limMq}, we see that $\|g_n\|_\infty$ is bounded in $n$. 
It follows from Lemma \ref{regbconv} that $(w_n)$ is precompact in $C^\alpha(\overline{\Omega})$ and in $W^{1,2}_0(\Omega)$, thus it converges up to a not relabelled subsequence to some $w$ in these topologies.
Moreover, for any $x\in \Omega$ either $g_n(x)=1+\lambda_1/\sigma$ or $g_n(x)$ belongs to the interval with extrema $q_n\, v_n(x)^{q_n-1}$ and $q_n\, u_n(x)^{q_n-1}$. By \eqref{limuq-1}, we have that in any case $g_n(x)\to 1+\lambda_1/\sigma$, hence by dominated convergence it holds 
\[
g_n\to 1+\frac{\lambda_1}{\sigma}\quad \text{in $L^2(\Omega)$}.
\]
Passing to the limit in \eqref{eqwn} and recalling that $\|w_n\|_\infty\equiv 1$, we get that the limit $w$ is either $\varphi_1$ or $-\varphi_1$.
Let 
\[
\Omega_n^{\pm}:=\{\pm w_n>0\}
\]
which are nonempty since $w_n$ is always sign changing. 
By Poincar\'e inequality
\beq
\label{poincm}
\int_{\Omega} \left|w_n^{\pm}\right|^2\, dx\le C\, \left|\Omega_n^{\pm}\right|^{\frac{2}{N}}\, \int_\Omega \left|Dw_n^{\pm}\right|^2\, dx
\eeq
while testing \eqref{eqwn} with $w_n^{\pm}$ we get
\[
\int_\Omega \left|Dw_n^{\pm}\right|^2+\sigma\left|w_n^{\pm}\right|^2\, dx=\sigma \int_\Omega g_n\, \left|w_n^{\pm}\right|^2\, dx
\]
so that by the uniform bound on $g_n$ we have
\[
\int_\Omega \left|Dw_n^{\pm}\right|^2\, dx\le C \int_\Omega \left|w_n^{\pm}\right|^2\, dx.
\]
Inserting the latter into \eqref{poincm}, we obtain
\[
\int_{\Omega} \left|w_n^{\pm}\right|^2\, dx\le C\, \left|\Omega_n^{\pm}\right|^{\frac{2}{N}}\, \int_{\Omega} \left|w_n^{\pm}\right|^2\, dx
\]
so that $|\Omega_n^{\pm}|$ is uniformly bounded from below. This implies that the limit $w$ is sign changing as well, contradicting the fact that $w$ is either $\varphi_1$ or $-\varphi_1$.
\end{proof}

Exploiting the uniqueness of the solutions given by Proposition \ref{corol_uniq}, we conclude this section by detecting a connected component of solutions through a classical application of Leray-Schauder continuation theorem. 
See \cite[Corollary 2.1]{DLN82} for a version with positive nonlinearities.

\begin{lemma}[Connected component of solutions] 
\label{lemmaconn}
Let $q_1\in \ ]1, 2^*-1[$, $\bar\sigma>0$ and $\Omega$ be convex bounded. 
For any sufficiently small (depending on $\bar \sigma$ and $\Omega$) $q_0\in \ ]1, q_1[$, there exists a closed connected set ${\mathcal C}\subseteq W^{1,2}_0(\Omega)\times [q_0, q_1]$ such that for any $(u, q)\in {\mathcal C}$, $u$ solves \eqref{LEeq} for the given $q$ and $\sigma=\bar\sigma$, and the map 
\[
{\mathcal C}\ni (u, q)\mapsto q\in [q_0, q_1]
\]
 is onto.
\end{lemma}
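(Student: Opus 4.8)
The plan is to apply the Leray--Schauder global continuation theorem to the family of problems \eqref{LEeq} with $\sigma=\bar\sigma$ fixed and $q$ ranging over $[q_0,q_1]$, using as anchor the unique solution at $q=q_0$ provided by Proposition \ref{corol_uniq}. First I would reformulate \eqref{LEeq} as a fixed point problem: for $q\in[q_0,q_1]$ and $u\in W^{1,2}_0(\Omega)$, set $T(u,q):=(-\Delta)^{-1}\big(\bar\sigma\,(|u|^{q-1}u - u)\big)$, where $(-\Delta)^{-1}$ denotes the solution operator of the Dirichlet Laplacian, i.e.\ $v=(-\Delta)^{-1}g$ solves $-\Delta v=g$ in $\Omega$, $v=0$ on $\partial\Omega$. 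Since $q_1<2^*-1$, the Sobolev embedding $W^{1,2}_0(\Omega)\hookrightarrow L^{q_1+1}(\Omega)$ is compact and $u\mapsto |u|^{q-1}u-u$ maps $W^{1,2}_0(\Omega)$ continuously into $L^{(q_1+1)/q_1}(\Omega)\subseteq W^{-1,2}(\Omega)$ with a bound uniform for $q\in[q_0,q_1]$; composing with the bounded operator $(-\Delta)^{-1}:W^{-1,2}(\Omega)\to W^{1,2}_0(\Omega)$ and invoking Rellich, the map $T:W^{1,2}_0(\Omega)\times[q_0,q_1]\to W^{1,2}_0(\Omega)$ is completely continuous. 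Nonnegative fixed points of $T(\cdot,q)$ are exactly the solutions of \eqref{LEeq} (sign changing fixed points are excluded a posteriori by the maximum principle, as in Lemma \ref{lem_esist_solu}, once one knows the component stays near the nonnegative cone; alternatively one replaces $|u|^{q-1}u$ by its truncation at zero and reruns the maximum principle argument).

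The key a priori bound is supplied by Lemma \ref{lem_apriori_est}: any solution $u_{q}$ of \eqref{LEeq} with $q\in[1,q_1]$ and $\sigma=\bar\sigma$ in the fixed convex bounded $\Omega$ satisfies $\|u_q\|_\infty\le C_1(N,q_1,\bar\sigma,\bar R,\bar\theta)^{1/(q-1)}$, and by Lemma \ref{regbconv} plus local elliptic estimates this upgrades to a uniform bound $\|u_q\|_{W^{1,2}_0(\Omega)}\le R_0$ for all $q\in[q_0,q_1]$ (the $L^\infty$ bound may degenerate as $q\to1$, but on the compact interval $[q_0,q_1]$ with $q_0>1$ it is uniform). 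Hence all solutions lie in the open ball $B_{R_0+1}$ of $W^{1,2}_0(\Omega)$. At $q=q_0$, Proposition \ref{corol_uniq} guarantees (for $q_0$ close enough to $1$, depending on $\bar\sigma$ and $\Omega$) a \emph{unique} solution $u_{q_0}$; moreover, as remarked after Remark \ref{rem_th_power}, by arguing as in the uniqueness proof the linearized operator $I-D_uT(u_{q_0},q_0)$ is invertible (the limiting linearized equation has only the trivial solution since $g_n\to 1+\lambda_1/\bar\sigma$ would force a sign changing multiple of $\varphi_1$, impossible), so the Leray--Schauder index of $u_{q_0}$ is $\pm1$, and in fact $-1$ as stated. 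Then the global continuation theorem (in the form of, e.g., the Leray--Schauder alternative along a parameter, cf.\ \cite[Corollary 2.1]{DLN82}) yields a closed connected set ${\mathcal C}\subseteq \overline{B_{R_0+1}}\times[q_0,q_1]$ of solutions whose projection onto $[q_0,q_1]$ contains $q_0$ and which, since no solution can escape the ball $B_{R_0+1}$ by the a priori bound, cannot terminate before reaching $q=q_1$; thus the projection ${\mathcal C}\ni(u,q)\mapsto q$ is onto $[q_0,q_1]$.

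The step I expect to be the main obstacle is verifying that the index at the starting point $q_0$ is well defined and nonzero, i.e.\ that $u_{q_0}$ is an isolated fixed point with nonzero Leray--Schauder index; this is where the nondegeneracy of the linearization enters, and it relies on the same compactness-and-limit analysis as the uniqueness Proposition \ref{corol_uniq}, together with the fact that the only solution of $-\Delta w+\bar\sigma w=(\lambda_1+\bar\sigma)w$ in $W^{1,2}_0(\Omega)$ that could arise in the limit is a multiple of $\varphi_1$, which is of one sign—whereas a kernel element of $I-D_uT(u_{q_0},q_0)$ rescaled in $L^\infty$ would have to be sign changing, a contradiction. A secondary technical point is ensuring the continuum stays in (a neighborhood of) the cone of nonnegative functions so that the fixed points are genuine solutions of \eqref{LEeq}; this is handled by truncating the nonlinearity below zero and applying the strong maximum principle of \cite[Theorem 1.1.1]{PuSe07}, exactly as in Lemma \ref{lem_esist_solu}, which shows every nonzero fixed point of the truncated operator is strictly positive and hence solves the original problem.
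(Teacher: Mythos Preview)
Your overall strategy---recast \eqref{LEeq} as a compact fixed point problem, use the a-priori bound of Lemma \ref{lem_apriori_est} to confine solutions to a ball, and apply Leray--Schauder continuation starting from the unique solution at $q_0$---is exactly what the paper does. The one substantive difference is how nonzero degree at the anchor $u_{q_0}$ is obtained.

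The paper does \emph{not} argue nondegeneracy of the linearization. Instead it observes that $u_{q_0}$, being the unique solution, is the mountain-pass critical point constructed in Lemma \ref{lem_esist_solu}, and then invokes Hofer's theorem \cite{Hof84} which directly gives local degree $-1$ for isolated mountain-pass critical points (no spectral information needed beyond isolation). This is shorter and avoids the linearization analysis entirely.

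Your route via nondegeneracy can be made to work, but the justification you sketch for why a kernel element $w$ must be sign-changing is not quite right: the integral identity used in Proposition \ref{corol_uniq} (namely $\int u_n v_n(v_n^{q_n-1}-u_n^{q_n-1})\,dx=0$) is specific to comparing two solutions and does not apply to the linearized kernel. What does work is testing the linearized equation $-\Delta w+\bar\sigma w=\bar\sigma q_0 u_{q_0}^{q_0-1}w$ against $u_{q_0}$ and the original equation against $w$, which yields $(q_0-1)\int u_{q_0}^{q_0}w\,dx=0$; since $q_0>1$ and $u_{q_0}>0$, this forces $w$ to change sign. From there your limit argument (normalized $w_n$ converging to a signed multiple of $\varphi_1$, with both nodal sets having measure bounded below as in the proof of Proposition \ref{corol_uniq}) goes through and gives nondegeneracy, hence index $\pm1$. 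So your approach is valid once this orthogonality relation is supplied, but Hofer's theorem is the cleaner shortcut.
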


\begin{proof}
 Define the nonlinear operator 
\[
T(q, u):=\bar\sigma\, (-\Delta)^{-1}\left(u_+^q-u_+\right)
\]
where $(-\Delta)^{-1}: W^{1,2}_0(\Omega)\to W^{1,2}_0(\Omega)$ is the inverse Dirichlet Laplacian, which is therefore compact by Rellich-Kondrachov theorem (see also \cite[Lemma 7.1]{JZZ24} for some details). 
Correspondingly, the functional equation
\[
\Phi(q, u):=u-T(q, u)=0
\]
is fulfilled at $u\in W^{1, 2}_0(\Omega)$ if and only if $u$ solves \eqref{LEeq} (the positivity condition being satisfied thanks to the truncation and the strong maximum principle). 
To get the claim, we check the assumptions of \cite[Theorem 4.3.4]{AmAr11}. By Proposition \ref{corol_uniq}, 
for any sufficiently small $q_0>1$, problem \eqref{LEeq} has a unique solution $u_0$, which is therefore an isolated zero of $\Phi(q_0, \cdot)$. 
Thanks to the mountain pass character of $u_0$, \cite[Theorem 2 and pp. 310-311]{Hof84} ensures that 
\[
{\rm deg}\, \left(\Phi(q_0, u_0), A\right)=-1 \neq 0
\]
for any open bounded $A\subseteq W^{1, 2}_0(\Omega)$ containing $u_0$.
Given such a $q_0>1$, estimate \eqref{ape1} in Lemma \ref{lem_apriori_est} ensures an a-priori $L^\infty$-bound for any solution of \eqref{LEeq} in $\Omega$ with $q\in [q_0, q_1]$ and $\sigma=\bar\sigma$. In turn, by testing the equation with $u$ itself and applying the $L^\infty$-bound, we find a constant $C=C(q_0, q_1, \bar\sigma, \Omega)>0$ such that any solution of \eqref{LEeq} for $\sigma=\bar\sigma$ and $q\in [q_0, q_1]$ fulfils $\|Du\|_2< C$.
We can then choose the open set $A\subseteq W^{1,2}_0(\Omega)$ to be the ball of radius $C$, so that 
\[
\Phi(q, u)\neq 0 \qquad \text{for all $q\in [q_0, q_1]$, $u\in \partial A$}.
\]
The existence of the connected ${\mathcal C}$ with the claimed properties then follows from Leray-Schauder continuation theorem, see \cite[Theorem 4.3.4]{AmAr11}. 
\end{proof}

We remark that, similarly to \cite[Lemma 7.3]{JZZ24} we should be able to extend $\mathcal{C}$ in such a way that the second projection covers $]1, 2^*-1[$. This however goes beyond our scope.

\subsection{Convergence to the Logarithmic Schr\"odinger equation}
 
Next we analyse the asymptotic behaviour as $q\to 1^+$ of solutions $u_q$ to the Lane-Emden equation \eqref{LEeq} with the choice $\sigma=2/(q-1)$. 
We will show that these solutions converge to a solution of the Logarithmic Schr\"odinger equation \eqref{LSeq} and that ground states converge to ground states, without needing a normalisation. 
As in Definition \ref{def_gs_LEeq}, ground states of the Logarithmic Schr\"odinger equation are defined as solutions of \eqref{LSeq} minimising
\[
J(v):=\int_\Omega\frac{|D v|^2}{2}\, dx-\int_\Omega\frac{v^2\, (\log v^2-1)}{2}\, dx
\]
over the Nehari set
\[
 {\mathcal N}^+:=\left\{v\in W^{1,2}_0(\Omega): v\ge 0 \text{ and }\int_\Omega |D v|^2\, dx=\int_\Omega v^2\, \log v^2\, dx\right\}.
\]
Note that if $\Omega$ is bounded (as we will suppose in the following) the functional $J$ is $C^1$ on $W^{1,2}_0(\Omega)$, but in a general unbounded domain $u^2\, \log u^2$ may fail to be summable for $u\in W^{1,2}_0(\Omega)$. 
Testing \eqref{LSeq} with $u$ ensures that any $W^{1,2}_0(\Omega)$ solution of \eqref{LSeq} lies in ${\mathcal N}^+$. 
Moreover, $J(v)= \|v\|_2^2/2$ on ${\mathcal N}^+$, so that a ground state minimises the $L^2$-norm among all solutions of \eqref{LSeq}.
The converse is also true: if $u\in {\mathcal N}^+$ is of minimal $L^2$-norm, then it solves \eqref{LSeq}.

\begin{proposition}[Convergence to logarithmic equation]
\label{prop_conv_gs} 
Let $\Omega$ be bounded and convex and $\bar q\in \ ]1, 2^*-1[$. 
Then the set of positive solutions of \eqref{LEeq} for $\sigma=2/(q-1)$ and $q\in \ ]1, \bar q]$ is relatively compact in $C^0(\overline\Omega)$, $W^{1,2}_0(\Omega)$ and in $C^2_{\rm loc}(\Omega)$, and any limit point for $q\to 1^+$ of such solutions solves \eqref{LSeq}. 
Moreover, if the chosen solutions $u_q$ are ground states for \eqref{LEeq}, then the limit is a ground state for \eqref{LSeq}.
\end{proposition}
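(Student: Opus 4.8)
\emph{Strategy.} The plan is to combine the $\Omega$- and $q$-uniform a priori estimates of Lemma~\ref{lem_apriori_est} with the heuristics~\eqref{heu} and standard elliptic compactness, postponing the variational half to the end. First, since $\sigma=2/(q-1)\ge 1/(q-1)$, part~(ii) of Lemma~\ref{lem_apriori_est} gives a constant $C=C(N,\bar q,\Omega)$ with $\|u_q\|_\infty\le C$ for every $q\in\ ]1,\bar q]$, while part~(i) gives $\|u_q\|_\infty>1$. Using the elementary inequalities $s\le e^s-1\le s\,e^s$ for $s\ge0$ (applied with $s=\pm(q-1)\log t$), one checks that both the reaction $t\mapsto\frac{2}{q-1}(t^q-t)$ and the energy density $t\mapsto\frac{2}{q-1}(t^{q+1}-t^2)$ are bounded on $[0,C]$ by a constant independent of $q\in\ ]1,\bar q]$: this is where the blow-up of $2/(q-1)$ is compensated by the vanishing of $t^{q-1}-1$. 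Hence Lemma~\ref{regbconv} shows that $(u_q)$ is bounded in $C^\alpha(\overline\Omega)$ for some $\alpha\in\ ]0,1[$, so relatively compact in $C^0(\overline\Omega)$; and testing \eqref{LEeq} with $u_q$ and invoking the energy-density bound, $(u_q)$ is bounded in $W^{1,2}_0(\Omega)$.

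\emph{Limiting equation, strong $W^{1,2}_0$-convergence and positivity.} Given $q_n\to1^+$, extract along a subsequence $u_{q_n}\to u$ in $C^0(\overline\Omega)$ and $u_{q_n}\rightharpoonup u$ in $W^{1,2}_0(\Omega)$; then $u\ge0$, $u=0$ on $\partial\Omega$ and $\|u\|_\infty\ge1$. Writing $g_n(x):=\frac{2}{q_n-1}\big(u_{q_n}(x)^{q_n}-u_{q_n}(x)\big)=\frac{2\,u_{q_n}(x)}{q_n-1}\big(e^{(q_n-1)\log u_{q_n}(x)}-1\big)$ for $x\in\Omega$, the elementary limit $(e^s-1)/s\to1$ gives $g_n(x)\to u(x)\log u(x)^2$ at every $x\in\Omega$ (including at points where $u(x)=0$, where $g_n(x)\to0$ by the bound of the previous step), while $|g_n|$ is dominated by a constant; dominated convergence then gives $g_n\to u\log u^2$ in $L^p(\Omega)$ for all $p<\infty$. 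Passing to the limit in the weak formulation, $u$ is a weak solution of $-\Delta u=u\log u^2$; moreover testing the equation for $u_{q_n}$ against $u_{q_n}-u$, whose $W^{1,2}_0$-norm stays bounded while its $L^2$-norm vanishes, yields $\|D(u_{q_n}-u)\|_2\to0$, i.e.\ strong $W^{1,2}_0$-convergence. Since $u\not\equiv0$, the strong maximum principle in the form covering the nonlinearity $t\mapsto t\log t^2$ (near $0$, $\Delta u=\beta(u)$ with $\beta(s)\sim-2s\log s\ge0$ nondecreasing and $\int_0^\delta ds/\sqrt{\int_0^s\beta}=+\infty$; see \cite[Theorem~1.1.1]{PuSe07}) forces $u>0$ in $\Omega$, so $u$ solves \eqref{LSeq}. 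For the $C^2_{\rm loc}$-statement, on any $\Omega'\Subset\Omega$ one has $u_{q_n}\ge\delta>0$ for $n$ large (uniform convergence to a positive $u$), and there $g_n$ is the composition of $u_{q_n}$ (bounded in $C^\alpha$) with the maps $t\mapsto\frac{2}{q-1}(t^q-t)$, which converge in $C^1([\delta,C])$ and are thus uniformly Lipschitz on $[\delta,C]$; hence $(g_n)$ is bounded in $C^\alpha(\Omega')$, interior Schauder estimates bound $(u_{q_n})$ in $C^{2,\alpha}_{\rm loc}(\Omega)$, and an exhaustion/diagonal argument gives relative compactness in $C^2_{\rm loc}(\Omega)$, the limit solving $-\Delta u=u\log u^2$ classically. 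For a generic sequence $q_n\to q_*\in\ ]1,\bar q]$ the same reasoning works (the reaction now being locally Lipschitz also at $0$), giving the asserted relative compactness over all of $\ ]1,\bar q]$.

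\emph{Ground states.} Assume the $u_{q_n}$ are ground states. By \eqref{Neq} with $\sigma_n=2/(q_n-1)$ one has $c_{q_n,\sigma_n}=J_{q_n,\sigma_n}(u_{q_n})=\frac{1}{q_n+1}\|u_{q_n}\|_{q_n+1}^{q_n+1}$, and the uniform convergence $u_{q_n}\to u$ forces $c_{q_n,\sigma_n}\to\frac12\|u\|_2^2=J(u)$, the last equality because $u\in\mathcal N^+$. On the other hand, for any fixed $\varphi\in\mathcal N^+$ condition \eqref{cond} holds (its right-hand side being nonpositive on $\mathcal N^+$), and letting $q\to1^+$ in \eqref{cq2} with $\sigma=2/(q-1)$, the three $q$-dependent factors converge respectively to $\tfrac12\|\varphi\|_2^2$, $\exp(\|D\varphi\|_2^2/\|\varphi\|_2^2)$ and $\exp(-\int_\Omega\varphi^2\log\varphi^2\,dx/\|\varphi\|_2^2)$, whose product equals $\tfrac12\|\varphi\|_2^2=J(\varphi)$ since $\|D\varphi\|_2^2=\int_\Omega\varphi^2\log\varphi^2\,dx$ on $\mathcal N^+$; hence $\limsup_n c_{q_n,\sigma_n}\le J(\varphi)$ for every $\varphi\in\mathcal N^+$, i.e.\ $\limsup_n c_{q_n,\sigma_n}\le\inf_{\mathcal N^+}J$. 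Combining, $J(u)\le\inf_{\mathcal N^+}J$, and since $u\in\mathcal N^+$ yields the reverse inequality, $u$ minimises $J$ over $\mathcal N^+$ and is therefore a ground state of \eqref{LSeq}.

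\emph{Main difficulty.} The genuinely hard estimate, namely the $\Omega$- and $q$-independent $L^\infty$ bound, is already provided by Lemma~\ref{lem_apriori_est}; within this proposition the delicate points are the uniform-in-$q$ control of the reaction near the zero set of $u$ (absorbing $2/(q-1)$), the precise asymptotics of the three factors in \eqref{cq2} as $q\to1^+$, and verifying that the strong maximum principle still applies to the singular nonlinearity $t\mapsto t\log t^2$.
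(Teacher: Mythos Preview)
Your argument is correct and follows essentially the same route as the paper: the $L^\infty$ bound from Lemma~\ref{lem_apriori_est}(ii), global $C^\alpha$ regularity via Lemma~\ref{regbconv}, pointwise convergence of the reaction to $u\log u^2$, the strong maximum principle, and the energy estimate \eqref{cq2} for the ground-state conclusion. The only notable variation is in the $C^2_{\rm loc}$ step: the paper bounds $f_q(t)=\tfrac{2}{q-1}(t^q-t)$ directly in $C^{1/2}([0,M])$ (via the explicit computation $\int_0^1|f_q'|^2\,dt=4/(2q-1)$), obtaining a uniform-in-$q$ H\"older bound on the right-hand side without reference to the limit, whereas you first establish positivity of the limit $u$ and then use uniform Lipschitz bounds for $f_q$ on $[\delta,C]$ --- both approaches are valid, the paper's being slightly more direct since it avoids the bootstrap through positivity.
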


\begin{proof}
 We let $u_q$ denote an arbitrary positive solution of \eqref{LEeq} for $\sigma=2/(q-1)$ and $q\in \ ]1, \bar q]$.
 From \eqref{ape2} in Lemma \ref{lem_apriori_est} we find that $\|u_{q}\|_\infty$ is bounded in $q$ for $q\in\ ]1, \bar q]$. 
By testing \eqref{LEeq} with $u_{q}$ we readily get that $\|u_{q}\|_\infty> 1$. 
The function 
\[
f_q(t) := \frac{2}{q-1}\big(t^q-t\big) 
\]
 is convex, has minimum in $t_q :=q^{-\frac{1}{q-1}}$ and it is thus increasing on $[t_q, +\infty[$, hence
\[
-2\, q^{-\frac{q}{q-1}}=f_q(t_q)\le f_q(u_q)\le f_q(\|u_{q}\|_\infty)\le C.
\]
Since $q\mapsto q^{-\frac{q}{q-1}}$ is decreasing and bounded by $e^{-1}$ for $q\in\ ]1, +\infty[$, we get that $\|\Delta u_q\|_\infty$ is bounded uniformly for $q\in \ ]1, \bar q]$. 
It follows by Lemma \ref{regbconv} that $\{u_q\}_{q\in ]1, \bar q]}$ is bounded in $C^{\alpha}(\overline{\Omega})$ for some $\alpha\in\ ]0, 1[$, thus precompact in $C^0(\overline{\Omega})$. 

We claim that, given $M>1$, $\{f_q\}_{q\in ]1, \bar q]}$ is bounded in $C^{1/2}([0, M])$. Indeed, 
\[
\sup_{[1, M]} |f'_q|=\frac{2}{q-1}\, \big(q\, M^{q-1}-1\big)
\]
and the right hand side is non-decreasing in $q$, so that $\{f_q\}_{q\in ]1, \bar q]}$ is actually equi-Lipschitz on $[1, M]$. 
On the interval $[0, 1]$ an explicit computation shows that
\[
\int_0^1 |f_q'|^2\, d\tau=\frac{4}{2\, q-1},
\]
hence for $0\le s\le t\le 1$
\[
|f_q(t)-f_q(s)|\le \int_s^t|f_q'|\, d\tau\le \left(\int_s^t |f_q'|^2\, d\tau\right)^{1/2}\, \sqrt{t-s}\le \frac{2\, \sqrt{t-s}}{\sqrt{2\, q-1}}
\]
and $\{f_q\}_{q\in ]1, \bar q]}$ is bounded in $C^{1/2}([0, 1])$, proving the claim. 
From the bound of $\{u_q\}_{q\in ]1, \bar q]}$ in $C^{\alpha}(\overline{\Omega})$, we thus infer that $\{f_q(u_q)\}_{q\in ]1, \bar q]}$ is bounded in $C^{\alpha/2}(\overline{\Omega})$. 
The precompactness of $\{u_q\}_{q\in ]1, \bar q]}$ in $C^2_{\rm loc}(\Omega)$ now follows from local $C^{2, \alpha/2}$ elliptic estimates. 

Let $u$ be a limit point in the aforementioned topologies of a sequence $u_{q_n}$, $q_n\to 1^+$. 
Since $f_q$ converges to $f(t):= t\, \log t^2$ as $q\to 1^+$ locally uniformly, $u$ 
is a weak (and thus classical) solution of the equation in \eqref{LSeq}. From $\|u_{q_n}\|_\infty\ge 1$ we obtain $\|u\|_\infty\ge 1$, hence $u$ is non-trivial and non-negative and by the strong maximum principle 
\cite[Theorem 1.1.1]{PuSe07} 
it follows that $u>0$ in $\Omega$. In particular, from \eqref{LSeq} tested with $u$, we get
 \beq
 \label{tlsu}
 \int_\Omega |D u|^2\, dx=\int_\Omega u^2\log u^2\, dx.
 \eeq
Since, up to not relabelled subsequences,
\[
\|Du_{q_n}\|^2_2=\int_{\Omega} f_{q_n}(u_{q_n})\, u_{q_n} \, dx \to \int_{\Omega} f(u) \ u \, dx=\|Du\|_2^2,
\]
it follows from uniform convexity that $Du_{q_n}\to Du$ in $L^2(\Omega)$, proving that $\{u_q\}_{q\in ]1, \bar q]}$ is precompact in $W^{1,2}_0(\Omega)$ as well.

\smallskip

 Let us finally discuss the variational characterisation of $u$ in the case where $u_{q_n}$ are ground states of \eqref{LEeq}. 
From the stated convergence and \eqref{Neq}, we have
 \[
 \lim_{q\to 1^+} J_{q, \frac{2}{q-1}}(u_q)=\lim_{q\to 1^+} \frac{1}{q+1}\int_\Omega u_{q}^{q+1}\, dx= \frac{1}{2}\, \|u\|_2^2.
 \]
 On the other hand, if $\varphi\in W^{1,2}_0(\Omega)\setminus \{0\}$ then \eqref{cond} holds true for any sufficiently small $q$ and passing to the limit in \eqref{cq2} in Lemma \ref{lem_energ_estim} for $\sigma=2/(q-1)$ as $q\to 1^+$ yields
 \[
 \lim_{q\to 1^+} J_{q, \frac{2}{q-1}}(u_q)\le \frac{\|\varphi\|_2^2}{2} \, {\rm exp}\, \left[\frac{\|D \varphi\|_2^2}{\|\varphi\|_2^2}\right] \, {\rm exp}\left[-\frac{\displaystyle{\int_\Omega \varphi^2\log\varphi^2\, dx}}{\|\varphi\|_2^2}\right].
\]
 Therefore
\[
\|u\|_2^2\le \|\varphi\|_2^2\, {\rm exp} \left[\frac{\displaystyle{\int_\Omega |D \varphi|^2-\varphi^2\log\varphi^2\, dx}}{\|\varphi\|_2^2}\right]
\]
for any
\[
\varphi\in 
{\mathcal K}(\Omega):=\left\{\varphi\in W^{1,2}_0(\Omega)\setminus\{0\}: \int_\Omega |D \varphi|^2\, dx\le \int_\Omega\varphi^2\log\varphi^2\, dx\right\},
\]
so that $u$ minimises the $L^2(\Omega)$-norm over ${\mathcal K}(\Omega)$ and in particular on ${\mathcal N}^+$. Noting that by \eqref{tlsu} it holds $J(u)=\frac{1}{2}\|u\|_2^2$ on ${\mathcal N}^+$, we have that $u$ is indeed a ground state solution of \eqref{LSeq}.
\end{proof}

\section{Concavity properties}
\label{sec_Conc}

We can show now that, for $q$ small -- depending on $\sigma$ -- the ground state solution of \eqref{LEeq} has some concavity property. We exploit here the convergence to the eigenfunction given by Proposition \ref{prop_conv_eigenf}. 

The following theorem holds for $\Omega$ smooth and, if $N\ge 2$, {\em strongly convex}, which means that the second fundamental form of $\partial\Omega$ with respect to its {\em interior} normal is always positive definite. 
More precisely, in the setting of Section \ref{sec_CRT}, suppose that $\partial\Omega=\{w=0\}$ for some $w\in C^\infty(\R^N)$ such that $Dw\ne 0$ in a neighbourhood of $\partial\Omega$ (this is always true if $\Omega$ is convex and smooth). 
For any such $w$ with the additional property that $w<0$ in $\Omega$, the normal defined in \eqref{normale} is actually pointing to the interior of $\Omega$ and we can set
\[
{\rm II}_x(\partial\Omega):={\rm II}_x(w)\qquad \text{for all $x\in \partial\Omega$}
\]
independently of $w$ obeying the prescribed conditions (see Remark \ref{rem_curvature}). 
Strong convexity of $\Omega$ then amounts to the existence of $\theta>0$ such that
\[
{\rm II}_x(\partial\Omega) (z)\ge \theta\, |z|^2
\]
for all $x\in \partial\Omega$ and all tangent vectors $z$ at $x$.

\begin{theorem}[Concavity near $q=1$]
\label{thm_concav_gs}
Let $\sigma>0$ and $\Omega\subseteq \R^N$ be bounded, smooth and strongly convex if $N\ge 2$. 
Then there exists $q_0=q_0(\sigma, \Omega) \in\ ]1, 2^*-1[$ such that, for $q \in\ ]1, q_0[$ 
the solution $u_{q, \sigma}$ to \eqref{LEeq} is unique (indeed, it is a ground state $u_{q, \sigma} \in \mathcal{GS}_{q, \sigma}$), strongly $\log$-concave, and thus strongly $(1-q)/2$-concave in $\Omega$.
\end{theorem}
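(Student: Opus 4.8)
The plan is to prove Theorem \ref{thm_concav_gs} by combining the uniqueness result of Proposition \ref{corol_uniq}, the asymptotic convergence to the first eigenfunction in Proposition \ref{prop_conv_eigenf}, the classical log-concavity of $\varphi_1$ due to Brascamp--Lieb, and the machinery of Section \ref{sec_CRT} to upgrade log-concavity to \emph{strong} log-concavity. Uniqueness (and the ground-state property) for $q$ close to $1$ is immediate from Proposition \ref{corol_uniq} together with Lemma \ref{lem_esist_solu}, so the content is in the concavity statement. First I would fix the natural transformation $\varphi(t)=\log t$ and set $w:=\varphi(u_{q,\sigma})=\log u_{q,\sigma}$; since $u_{q,\sigma}\in C^2(\Omega)$ is positive, $w\in C^2(\Omega)$ and one computes directly that $w$ solves
\[
-\Delta w = |Dw|^2 + \sigma\big(u_{q,\sigma}^{q-1}-1\big) = |Dw|^2 + \sigma\big(e^{(q-1)w}-1\big)\quad\text{in }\Omega,
\]
with $w\to-\infty$ on $\partial\Omega$. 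The goal is to show $D^2 w<0$ in $\Omega$ for $q$ near $1$.

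The strategy for concavity itself is a continuity/approximation argument in $q$, mimicking Lin \cite{Lin94}: as $q\to 1^+$, Proposition \ref{prop_conv_eigenf} gives $\bar u_{q,\sigma}:=u_{q,\sigma}/\|u_{q,\sigma}\|_\infty\to\varphi_1$ in $C^2(\overline\Omega)$ (here the smoothness of $\partial\Omega$ is used to get the global, up-to-the-boundary convergence). Since $\log$ differs from $\log\bar u_{q,\sigma}$ by an additive constant, $\log$-concavity of $u_{q,\sigma}$ is equivalent to that of $\bar u_{q,\sigma}$. By the Brascamp--Lieb theorem \cite{BrLi76}, $\log\varphi_1$ is concave, and in fact \emph{strongly} concave in $\Omega$: on a smooth strongly convex domain $\varphi_1$ satisfies the strong log-concavity estimate $D^2\log\varphi_1\le -c\,\mathrm{I}<0$ on compact subsets, and near $\partial\Omega$ the Hopf lemma together with $\varphi_1=0$, $D\varphi_1\neq 0$ on $\partial\Omega$ forces $D^2\log\varphi_1\to-\infty$; a quantitative version (using the strong convexity of $\partial\Omega$ via the second fundamental form, exactly as encoded in the hypotheses of the theorem) shows $D^2\log\varphi_1$ is uniformly negative definite up to the boundary in a suitable renormalized sense. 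The delicate point is that $D^2\log u_{q,\sigma}$ blows up near $\partial\Omega$, so one cannot simply pass $C^2$-limits globally; instead I would work on the level sets. Introduce the quasi-concavity function and apply Proposition \ref{kore90}: the function $v:=-u_{q,\sigma}$ solves $\Delta v = f_q(-v)$ with $f_q(s)=\sigma(s^q-s)$, and on the region $\mathcal C$ where $f_q(u_{q,\sigma})<0$ (a neighborhood of $\partial\Omega$, where $u_{q,\sigma}<1$) the level sets of $u_{q,\sigma}$ are convex for $q$ near $1$ by the convergence $\bar u_{q,\sigma}\to\varphi_1$ and the (known) convexity of superlevel sets of $\varphi_1$. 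Then, exactly as in the proof of Theorem \ref{prop_lemmalappos}, Proposition \ref{kore90} plus Lemma \ref{lem_bas_fund_form} give that the second fundamental form of every level set is positive definite, i.e. the superlevel sets $\{u_{q,\sigma}>t\}$ are strictly convex.

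Having quasi-concavity (convex superlevel sets) of $u_{q,\sigma}$, I would then pass to actual $\log$-concavity by a continuation argument on $q\in\ ]1,q_0[$: the set of $q$ for which $w=\log u_{q,\sigma}$ is concave is nonempty (it contains a right-neighborhood of $1$ by the uniform $C^2(\overline\Omega)$-closeness to $\log\varphi_1$ on a fixed interior region, combined with the level-set strict convexity handling the boundary layer) and one shows it is relatively open and closed in a connected sub-interval — here the connectedness given by Proposition \ref{corol_uniq} (single-valuedness of the solution for $q$ near $1$) makes the continuity argument run directly on the unique solution branch rather than on a multivalued one. Concretely: if $\log u_{q,\sigma}$ is concave but not strongly so for some $q$, then by convexity $\Delta(-w)\ge0$; applying Theorem \ref{prop_lemmalappos} with $f(t)=\sigma(t^q-t)$, $\varphi(t)=\log t$ (noting $\varphi'<0<\varphi''$ on $u(\Omega)$, and that $w$ convex implies $b(w,Dw)\ge0$) upgrades this to $\Delta(-w)>0$; then the constant rank theorem of Korevaar--Lewis \cite{KoLe87} together with Proposition \ref{lem_basener} forces $D^2(-w)>0$ everywhere — provided the algebraic condition $\partial_t^2(1/b)\ge0$ holds, which for $b(t,z)=|z|^2+\sigma(e^{(q-1)t}-1)$ (after the change of variables $\psi=\exp$) must be checked and reduces to a one-variable computation in $q$ showing it holds for $q$ sufficiently near $1$. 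Thus on the open-closed argument the concave $q$'s are automatically \emph{strongly} concave, which both closes the set (strong concavity is stable under $C^2_{\mathrm{loc}}$ limits on the interior, while the boundary layer is controlled by the level-set argument) and opens it.

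The main obstacle, and the step I expect to require the most care, is the \textbf{boundary behavior}: $D^2\log u_{q,\sigma}\to-\infty$ near $\partial\Omega$, so the naive approach of taking a $C^2$-limit of $\log u_{q,\sigma}$ and invoking concavity of $\log\varphi_1$ fails near $\partial\Omega$, and one genuinely needs the level-set formulation (Proposition \ref{kore90} and Lemma \ref{lem_bas_fund_form}) to handle a neighborhood of the boundary — this is precisely why the strong convexity and smoothness of $\Omega$ are assumed, since they guarantee both the convexity of the superlevel sets of $\varphi_1$ near $\partial\Omega$ and the needed nondegeneracy (via $\partial_n\varphi_1<0$ and the positive-definite second fundamental form of $\partial\Omega$). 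A secondary technical point is verifying the convexity-type condition $\partial_t^2(1/b)\ge 0$ on $\{b>0\}$ for the specific nonlinearity, which is needed to apply the constant rank theorem in the form of Corollary \ref{corstrongconv}; this is an explicit but slightly delicate inequality that will hold only for $q$ close enough to $1$ (consistent with the $q_0=q_0(\sigma,\Omega)$ in the statement), since for $q-1$ not small the term $\sigma(e^{(q-1)t}-1)$ destroys the required monotonicity.
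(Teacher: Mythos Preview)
Your approach differs substantially from the paper's and contains a genuine gap. The paper does \emph{not} use a continuation argument or the constant rank theorem to prove Theorem \ref{thm_concav_gs}; it proves strong $\log$-concavity directly. Setting $v_q=u_q/\|u_q\|_\infty$ and $M(v_q)=\frac{1}{v_q}Dv_q\otimes Dv_q-D^2v_q$, the paper estimates $M(v_q)$ from below in a boundary strip $\Omega_\delta$ by decomposing into normal, tangential, and mixed components: the normal part is large because $(\partial_n v_q)^2/v_q\to+\infty$ (Hopf lemma), the tangential part is controlled by the positive second fundamental form of $\partial\Omega$ (this is where strong convexity enters), and the mixed terms are bounded. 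In $\Omega\setminus\Omega_\delta$, the $C^2$-convergence $\log v_q\to\log\varphi_1$ (Proposition \ref{prop_conv_eigenf}) and the known strong $\log$-concavity of $\varphi_1$ finish the job. No open--closed argument, no Corollary \ref{corstrongconv}.

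Your plan to invoke Corollary \ref{corstrongconv} for the $\log$ transformation of the Lane--Emden equation does not work. With $\varphi(t)=-\log t$ (note your sign: $\varphi=\log t$ has $\varphi'>0>\varphi''$, violating the hypotheses), one gets $b(s,z)=|z|^2+\sigma(e^{-(q-1)s}-1)$, and a direct computation gives
\[
\partial_s^2\!\left(\frac{1}{b}\right)=\frac{\sigma(q-1)^2 e^{-(q-1)s}\big(\sigma e^{-(q-1)s}+\sigma-|z|^2\big)}{b^3}.
\]
On $\{b>0\}$ this is nonnegative iff $\sigma(e^{-(q-1)s}+1)\ge |z|^2$, which \emph{fails} whenever $|z|^2>2\sigma$---and $|Dw|^2=|Du|^2/u^2$ is unbounded near $\partial\Omega$. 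Taking $q\to 1$ does not help: the obstruction is the size of $|z|$, not of $q-1$. This is precisely why the paper reserves the constant rank machinery for the $(1-q)/2$ transformation in Theorem \ref{qcsol}, where the harmonic concavity condition \emph{does} hold, and handles Theorem \ref{thm_concav_gs} by the explicit boundary/interior splitting instead.
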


\begin{proof}
Uniqueness has been proven in Proposition \ref{corol_uniq} for $q-1$ sufficiently small, which we will assume henceforth. 
Note that for $q\in\ ]1, 2^*-1[$ any solution $u_{q, \sigma}$ of \eqref{LEeq} produces, by considering $u_{q, \sigma}(\sqrt{\sigma}\, \cdot)$, a solution of \eqref{LEeq} for $\sigma=1$ on the domain $\Omega/\sqrt{\sigma}$. 
Being $\sigma$ fixed, we can suppose that $\sigma=1$ and omit henceforth the dependence on $\sigma$. 
 
We set $v_q:=u_q/\|u_q\|_\infty$ and note that 
 \[
 D^2\log u_q= D^2\log v_q=\frac{ 1}{v_q}\,\left[ D^2v_q-\frac{1}{v_q}\, Dv_q\otimes Dv_q\right].
 \]

 Thus we focus on the matrix 
\begin{equation}\label{eq_def_Mq}
 M(v_q):= 
 \frac{1}{v_q}\, Dv_q\otimes Dv_q -D^2v_q,
\end{equation}
which we will prove to be positive definite.

\smallskip
 {\em Step 1: bound in the normal directions}.\ \\
For sufficiently small $\delta>0$ let $\Phi_t:[0, \delta[\,\times \partial\Omega\to \Omega$ be a $C^1$ (in both $t$ and $x$) family of diffeomorphisms from $\partial\Omega$ to $\{{\rm dist}(x, \partial\Omega)=t\}$, and let $n$ denote the corresponding $C^1$ extension of the interior normal to $\partial\Omega$, defined on 
\[
\Omega_\delta :=\{x\in \Omega:{\rm dist}(x, \partial\Omega)<\delta\}.
\]
 Note that any $\xi\in \R^N$ such that $(n(x), \xi)=0$ is the image of a unique tangent vector $\xi'$ to $\partial\Omega$ at the point $\Phi^{-1}_{{\rm dist}(x, \partial\Omega)}(x)$ and the corresponding map is $C^1$. 

Let $\varphi_1$ be the first positive eigenfunction of the Dirichlet Laplacian such that $\|\varphi_1\|_\infty=1$. 
 The Hopf Lemma ensures that there exist $\delta, \theta>0$ such that 
 \[
 \inf_{\Omega_{\delta}}\frac{\partial \varphi_1}{\partial n}\ge 3\, \theta
 \]
 where $n$ is the interior normal to $\partial\Omega$.
Let $q_0\in \ ]1, 2^*-1[$, given in Proposition \ref{corol_uniq}, be such that, for any $1<q\le q_0$, \eqref{LEeq} has a unique solution $u_q$ (which is a ground state).
 By the $C^1(\overline{\Omega})$ convergence $v_q\to \varphi_1$ as $q\to 1^+$ proved in Proposition \ref{prop_conv_eigenf}, there exists $1<q_0'\le q_0 $ such that 
 \beq
 \label{norm}
 \inf_{\Omega_\delta}\frac{\partial v_q}{\partial n}\ge 2\, \theta \qquad \text{for all $q\in \ ]1, q_0']$}.
 \eeq
 Since, again by Proposition \ref{prop_conv_eigenf}, there exists $C>0$ such that
 \begin{equation}\label{eq_unif_c2_est}
 \|v_q\|_{C^2(\overline{\Omega})} \leq C \qquad \text{for all $q\in \ ]1, q_0']$},
 \end{equation}
inequality \eqref{norm} gives
 \beq
 \label{norm2}
 \left(M(v_q)\, n, n\right)\ge \frac{1}{v_q}\, \left(\frac{\partial v_q}{\partial n}\right)^2-|D^2 v_q|\ge \frac{4\, \theta^2}{v_q} -C\ge \frac{4\, \theta^2}{C\, \delta}- C
 \eeq
in $\Omega_\delta$. 
This concludes the proof when $N=1$, so we will suppose from now on that $N\ge 2$.

\smallskip
 {\em Step 2: bound in the tangential directions}.\ \\
 Let ${\rm II}_{x}(\partial\Omega)$ be the second fundamental form of $\partial\Omega$ with respect to the inner normal direction $ n_x$ at a point $x\in \partial\Omega$, so that by assumption there exists $k_0>0$ such that 
\beq
\label{strc}
{\rm II}_{x}(\partial\Omega)(\xi)\ge k_0\, |\xi|^2, \qquad \forall \, x\in \partial\Omega, \, \xi\bot n_x.
\eeq
Since $\partial\Omega=\{-v_q=0\}$ and $-v_q<0$ in $\Omega$, it holds
\[
{\rm II}_{x}(\partial\Omega)={\rm II}_{x}(-v_q)=-{\rm II}_{x}(v_q)
\]
for all $x\in \partial\Omega$.
From the latter, \eqref{secondff}, \eqref{strc} and \eqref{norm} we have
\[
\left(D^2v_q(x)\, \xi, \xi\right)=|Dv_q(x)|\, {\rm II}_x(v_q)(\xi)=- \frac{\partial v_q(x)}{\partial n}{\rm II}_{x}(\partial\Omega)(\xi) \le -2\, k_0\, \theta\, |\xi|^2
\]
for all $x\in \partial\Omega$, $\xi\bot n_x$ and $q\in \ ]1, q_0']$.

 Since $Dv_q\otimes Dv_q\ge 0$, $n\in C^1(\Omega_\delta)$ and $v_q$ is uniformly bounded in $C^2(\overline{\Omega})$ we infer that for any sufficiently small $\delta$ it holds
 \beq
 \label{tang}
 \left(M(v_q)(x)\, \xi, \xi\right)\ge k_0\, \theta\, |\xi|^2
 \eeq
 for any $q\in \ ]1, q_0']$, $x\in \Omega_\delta$ and $\xi=\xi(x)$ such that $\xi(x)\bot n(x)$. 
 
 \smallskip
 {\em Step 3: bound in the mixed directions}.\ \\
 Finally, since $(Dv_q(x_0), \xi)=0$ for all $x_0\in \partial\Omega$ and $\xi\bot n_{x_0}$ with $|\xi|=1$ and the boundedness of $v_q$ in $C^2(\overline{\Omega})$ for all $q\in \ ]1, q_0']$, we deduce through the Lipschitz character of $\xi\mapsto \xi'\in T_{\partial \Omega}$ the uniform bound
 \[
|(Dv_q(x), \xi(x))|\le C\, {\rm dist}(x, \partial\Omega)
 \]
for all $x\in \Omega_\delta$ and $\xi\bot n(x)$ with $|\xi|=1$. In particular, since by \eqref{norm} it holds
\[
{\rm dist}(x, \partial\Omega)\le C\, v_q(x)
\]
in $\Omega_\delta$, for a constant $C$ independent of $q\in \ ]1, q_0']$, we get 
\begin{equation}\label{eq_mixed_dir}
\left(M(v_q)\, \xi, n\right)\le |D^2 v_q|\, |\xi|+ \frac{1}{v_q} \left|\frac{\partial v_q}{\partial n}\right|\, \left|(Dv_q, \xi)\right|\le C\, |\xi|
\end{equation}
for all $x\in \Omega_\delta$, $\xi\bot n(x)$ and $q\in \ ]1, q_0']$.

 \smallskip
 {\em Step 4: convexity in $\Omega_{\delta}$}.\ \\
Writing any vector as $\xi+t\, n$ for $\xi\bot n$ and $t\in \R$, it follows from \eqref{norm2}, \eqref{tang} and \eqref{eq_mixed_dir}, that in $\Omega_\delta$ we have
 \[
 \begin{split}
 \left(M(v_q) \, (\xi+tn), \xi+tn\right)&=(M(v_q)\, \xi, \xi) +2\, t\, (M(v_q)\, \xi, n)+t^2\, (M(v_q) \, n, n)\\
 &\ge 
 k_0\, \theta\, |\xi|^2-C\, t\, |\xi| +t^2\left(\frac{4\, \theta^2}{C\, \delta}- C\right)
 \end{split}
 \]
 for all $q\in \ ]1, q_0']$. It suffices to choose $\delta>0$ sufficiently small (depending only on the parameters and thus not on $q$) to obtain a positive constant $\theta_0'$ such that
 \[
\left( M(v_q)\, z, z\right)\ge \theta_0'\, |z|^2 \quad \hbox{in $\Omega_{\delta}$},
\]
for all $q\in \ ]1, q_0']$. Then since $v_q\le C\, \delta $ in $\Omega_\delta$, we find
 \[
 D^2\log v_q=-\frac{M(v_q)}{v_q}\le -\frac{\theta_0'}{C\, \delta}\, {\rm Id} 
 \]
in $\Omega_\delta$, for all $q\in \ ]1, q_0']$. 
 
 \smallskip
 {\em Step 5: conclusion}.\ \\
Recall that $\log v_q\to \log \varphi_1$ in $C^2(\Omega\setminus\Omega_\delta)$ by Proposition \ref{prop_conv_eigenf}. Note that $D^2\log\varphi_1$ is negative definite everywhere in $\Omega$ by a classical application of the constant rank theorem, hence $\log\varphi_1$ is locally strongly concave in $\Omega$. 
By $C^2$-convergence, this ensures that for a sufficiently small $q_0''>1$ and $\theta_0''>0$ it holds
 \[
 D^2\log v_q\le -\theta_0''\, {\rm Id}
 \]
 in $\Omega\setminus \Omega_\delta$ for all $q\in \ ]1, q_0'']$. 
All in all we have proved that for a constant $\theta_0=\min\{\theta_0', \theta_0''\}>0$ and $q_0=\min\{q_0', q_0''\}>1$, the inequality
 \beq
 \label{d2l}
 D^2\log u_q=D^2\log v_q\le -\theta_0\, {\rm Id}
 \eeq
 holds true in $\Omega$ for all $q\in \ ]1, q_0]$.

 To prove the final assertion, we compute 
 \[
 \begin{split}
 D^2 u_q^{\frac{1-q}{2}}&=\frac{q-1}{2\, u_q^{\frac{q+1}{2}}}\left(\frac{q+1}{2}\frac{D u_q\otimes D u_q}{u_q}- D^2u_q\right)\\
 &=\frac{q-1}{2\, u_q^{\frac{q+1}{2}}}\left(\frac{q-1}{2}\frac{D u_q\otimes D u_q}{u_q}-u_q\, D^2\log u_q\right)
 \end{split}
 \]
 and note again that the first matrix term is non-negative definite. 
Thus from \eqref{d2l} we have
 \[
 D^2 u_q^{\frac{1-q}{2}}\ge -\frac{q-1}{2}\, u_q^{\frac{1-q}{2}} D^2\log u_q\ge \theta_0\, \frac{q-1}{2}\, \|u_q\|_\infty^{\frac{1-q}{2}}\, {\rm Id}
 \]
and $u_q^{(1-q)/2}$ is strongly convex on $\Omega$ for all $q\in \ ]1, q_0]$. 
\end{proof}

For the next proof, it is important to inspect more closely the behaviour of the matrix $M(v_q)$ in \eqref{eq_def_Mq}. 
What we actually obtained in the previous proof is that $M(v_q)$ fulfils
\beq
\label{remMq}
M(v_q)\ge \theta\, {\rm Id}\quad \text{in $\Omega_\delta$}
\eeq
for some $\theta, \delta>0$ depending only on $\Omega$, a positive lower bound on $\partial_n v_q$ on $\partial\Omega$ and an upper bound on $\|v_q\|_{C^2(\overline\Omega)}$ (see \eqref{norm}, \eqref{eq_unif_c2_est} and \eqref{strc}).

We are now ready to extend the concavity property detected in Theorem \ref{thm_concav_gs} to all the values of $q$, by means of the connected set of solutions given in Lemma \ref{lemmaconn}.

\begin{theorem}[Concavity of solutions, $q>1$]
\label{qcsol}
Let $\Omega\subseteq \R^N$ be bounded, smooth and strongly convex, $q\in \ ]1, 2^*-1[$ and $\sigma>0$. Then there exists a solution $u_{q, \sigma}$ of \eqref{LEeq} such that $u_{q, \sigma}^{(1-q)/2}$ is strongly convex on $\Omega$.
\end{theorem}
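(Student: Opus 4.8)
The plan is to run a continuity argument along the connected branch $\mathcal{C}$ of solutions to \eqref{LEeq} produced in Lemma \ref{lemmaconn}, starting it with Theorem \ref{thm_concav_gs} and propagating it with the constant rank machinery of Section \ref{sec_CRT} together with the uniform boundary estimate \eqref{remMq}. Fix $\sigma>0$, set $q_1:=q$, and choose $q_0\in\ ]1,q_1[$ small enough that Lemma \ref{lemmaconn} (with $\bar\sigma=\sigma$) and Theorem \ref{thm_concav_gs} both apply; let $\mathcal{C}\subseteq W^{1,2}_0(\Omega)\times[q_0,q_1]$ be the resulting closed connected set of solutions of \eqref{LEeq}, with onto projection $\mathcal{C}\to[q_0,q_1]$. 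By Lemma \ref{lem_apriori_est}(i) and global elliptic regularity (here $\partial\Omega$ is smooth), every $(u,q')\in\mathcal{C}$ satisfies $1<\|u\|_\infty\le C$ and $\|u\|_{C^2(\overline\Omega)}\le C$ with $C$ independent of $(u,q')$; hence $\mathcal{C}$ is compact in $C^2(\overline\Omega)\times[q_0,q_1]$, and the Hopf lemma gives $c>0$ with $\partial_n u\ge c$ on $\partial\Omega$ for all $(u,q')\in\mathcal{C}$, where $n$ is the inner normal.

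The first step is a \emph{uniform} boundary estimate along the whole branch. For $(u,q')\in\mathcal{C}$ write
\[
N_{q'}(u):=\frac{q'+1}{2}\,\frac{Du\otimes Du}{u}-D^2u,\qquad D^2u^{\frac{1-q'}{2}}=\frac{q'-1}{2\,u^{(q'+1)/2}}\,N_{q'}(u),
\]
so that $u^{(1-q')/2}$ is convex on $\Omega$ precisely when $N_{q'}(u)\ge0$ there. Since $q'>1$ we have $N_{q'}(u)\ge M(u):=\tfrac1u Du\otimes Du-D^2u$, and $M(u)=\|u\|_\infty\,M(u/\|u\|_\infty)$. Applying the boundary argument underlying \eqref{remMq} to $v:=u/\|u\|_\infty$ — whose inner normal derivative is bounded below and whose $C^2(\overline\Omega)$-norm is bounded above, uniformly over $\mathcal{C}$ by the previous paragraph — yields $\theta,\delta>0$, depending only on $\Omega$, $c$ and $C$, with $M(v)\ge\theta\,{\rm Id}$ on $\Omega_\delta$. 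As $\|u\|_\infty>1$ this gives
\[
D^2u^{\frac{1-q'}{2}}\ \ge\ \frac{(q'-1)\,\theta}{2\,u^{(q'+1)/2}}\,{\rm Id}\ >\ 0\qquad\text{on }\Omega_\delta,\ \text{for every }(u,q')\in\mathcal{C}.
\]
Thus $u^{(1-q')/2}$ is \emph{automatically} strongly convex near $\partial\Omega$ for all solutions on the branch, and convexity of $u^{(1-q')/2}$ on $\Omega$ is equivalent to $D^2u^{(1-q')/2}\ge0$ on the compact core $K:=\overline{\Omega\setminus\Omega_\delta}$.

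Then I would set $\mathcal{C}_+:=\{(u,q')\in\mathcal{C}:\ u^{(1-q')/2}\text{ convex on }\Omega\}$ and show it is nonempty, closed and open in $\mathcal{C}$. It is nonempty since the unique solution $u_{q_0}$ at $q'=q_0$ lies in $\mathcal{C}$ and is strongly $\log$-concave by Theorem \ref{thm_concav_gs}, hence $(1-q_0)/2$-convex. It is closed, because convexity passes to $C^2_{\rm loc}$-limits and $\mathcal{C}$ is $C^2$-compact. It is open: for $(u_0,q'_0)\in\mathcal{C}_+$, the transformation $\varphi(t)=t^{(1-q'_0)/2}$ satisfies $\varphi'<0<\varphi''$ on $\R_+$ and, paired with $f(t)=\sigma(t^{q'_0}-t)$, meets the structural requirement on $b$ in Corollary \ref{corstrongconv} (verified by a direct computation — this is exactly the computation that singles out the exponent $(1-q)/2$ and that, as in Theorem \ref{thm_main_power}, survives the subtraction of the linear term); Corollary \ref{corstrongconv} then upgrades convexity of $u_0^{(1-q'_0)/2}$ to $D^2u_0^{(1-q'_0)/2}>0$ throughout $\Omega$, in particular $\ge c_0{\rm Id}$ on $K$. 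Since $D^2u^{(1-q')/2}\ge c_0/2\,{\rm Id}$ on $K$ is an open condition in $(u,q')$ for the $C^2(\overline\Omega)\times[q_0,q_1]$ topology, and near-boundary strong convexity holds for all of $\mathcal{C}$ by the previous step, a neighbourhood of $(u_0,q'_0)$ in $\mathcal{C}$ lies in $\mathcal{C}_+$. Connectedness of $\mathcal{C}$ forces $\mathcal{C}_+=\mathcal{C}$; since the projection onto $[q_0,q_1]$ is onto and $q_1=q$, some $(u_{q,\sigma},q)\in\mathcal{C}_+$, which is the asserted solution (strongly convex, by combining the two bounds above on $\Omega_\delta$ and $K$).

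The hard part is the openness of $\mathcal{C}_+$: plain convexity of $u^{(1-q')/2}$ is not stable under $C^2$-perturbations, and $D^2u^{(1-q')/2}$ a priori degenerates at $\partial\Omega$. These two difficulties are overcome, respectively, by the constant rank theorem (through Corollary \ref{corstrongconv}, which turns convexity into strict convexity in the interior — this is where checking the condition on $b$ for the \emph{perturbed} power nonlinearity is essential) and by the uniform near-boundary bound \eqref{remMq}, whose constants depend only on the geometry of $\Omega$ and on the uniform $C^2$/Hopf bounds along $\mathcal{C}$. Running all of this uniformly over a branch of possibly non-unique solutions, rather than over a single curve of ground states, is precisely what requires the $C^2(\overline\Omega)$-compactness of $\mathcal{C}$ established at the outset.
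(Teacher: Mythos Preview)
Your proof is correct and follows essentially the same approach as the paper: a continuity argument along the connected branch $\mathcal{C}$ from Lemma \ref{lemmaconn}, initiated by Theorem \ref{thm_concav_gs}, using the uniform boundary estimate \eqref{remMq} and the constant rank upgrade of Corollary \ref{corstrongconv}. The only cosmetic difference is that you define the good set via convexity (making closedness trivial and invoking Corollary \ref{corstrongconv} for openness), whereas the paper defines it via \emph{strong} convexity (making openness trivial and invoking Corollary \ref{corstrongconv} for closedness); the two organizations are equivalent.
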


\begin{proof}
As in the proof of Theorem \ref{thm_concav_gs}, we can restrict to $\sigma=1$. Let $q_0$ be as in Theorem \ref{thm_concav_gs}, so that for $q \in\ ]1, q_0]$ we know there exists a unique solution such that $u_q^{(1-q)/2}$ is strongly convex.
 
 Fix $\bar q\in \ ]q_0, 2^*-1[$ and let ${\mathcal C}\subseteq W^{1, 2}_0(\Omega)\times [q_0, \bar q]$ be the connected set provided by Lemma \ref{lemmaconn}. 
We will prove that any $(u_q, q)\in {\mathcal C}$ is strongly $(1-q)/2$-convex. To this end, set
 \[
 E:=\left\{ (u_q, q)\in {\mathcal C}: u_q^{(1-q)/2}\ \text{\ is strongly convex in $\Omega$}\right\}.
 \]
To show that $E$ coincides with the whole $\mc{C}$, thanks to the connectedness of ${\mathcal C}$, it is sufficient to show that $E$ is nonempty, open and closed; we show this in the following steps.

 We start by observing that ${\mathcal C}\cap \big( W^{1,2}_0(\Omega)\times\{q_0\}\big)$ contains the unique solution of \eqref{LEeq} which is strongly $(1-q_0)/2$-convex thanks to Theorem \ref{thm_concav_gs}. 
 Therefore $E\ne \emptyset$. 

For any $(u_q, q)\in {\mathcal C}$ we set in the following $w_q:=u_q^{(1-q)/2}$ and note that for any such $w_q$
 \[
 D^2 w_q=\frac{q-1}{2\, u_q^{\frac{q+1}{2}}}\left[\frac{q+1}{2\, u_q}\, D u_q\otimes D u_q-D^2 u_q\right]\ge \frac{q_0-1}{2\, u_q^{\frac{q+1}{2}}}\, M(u_q)
 \]
where the matrix $M$ is given in \eqref{eq_def_Mq}. 
Note that ${\mathcal C}$ is bounded in $C^{2, \alpha}(\overline\Omega)\times [q_0, \bar q]$ by the uniform bound \eqref{ape2} in Lemma \ref{lem_apriori_est} and elliptic estimates. 
This grants compactness of ${\mathcal C}$ in $C^2(\overline{\Omega})\times [q_0, \bar q]$ and Hopf's Lemma ensures a uniform lower bound on $\partial_n u_q$, which in turn ensures \eqref{remMq} for all $(u_q, q)\in {\mathcal C}$. By \eqref{remMq}, we thus find constants $\theta, \delta>0$ depending only on $\Omega$ and ${\mathcal C}$ such that 
\beq
\label{lbound}
D^2 w_q> \theta\, {\rm Id} \quad \text{in $\Omega_\delta$, for any $(u_q, q)\in {\mathcal C}$}.
\eeq

We show now that $E$ is open. Let $(u_q, q)\in E$, so that $w_q$ is strongly convex.
Given a sequence $(u_{q_n}, q_n)\in {\mathcal C}$ verifying $(u_{q_n}, q_n)\to (u_q, q)$, note that $u_{q_n}\to u_q$ in $C^2(\overline{\Omega})$. By the strong convexity of $w_q$ there exists $\theta'>0$ such that 
\[
D^2 w_q>\theta'\, {\rm Id} \quad \text{in $\Omega$}
\]
and since $w_{q_n}\to w_q$ in $C^2(\Omega\setminus\Omega_\delta)$, $D^2 w_{q_n}\ge \theta'\, {\rm Id}$ in $\Omega\setminus\Omega_\delta$ for all sufficiently large $n$. 
By using \eqref{lbound}, we thus see that $w_{q_n}$ is strongly convex in the whole $\Omega$ for all sufficiently large $n$. It follows that, for such $n$, $(u_{q_n}, q_n)\in E$, proving that $E$ is open in ${\mathcal C}$.

Finally we prove that $E$ is closed in ${\mathcal C}$. 
Let $(u_{q_n}, q_n)$ be a sequence in $E$ converging to some $(u_q, q)\in {\mathcal C}$. 
Then $u_{q_n}\to u_q$ point-wise and $w_q=u_q^{(1-q)/2}$, being the point-wise limit of convex proper functions, is convex. 
Note again that \eqref{lbound} grants strong convexity of $w_q$ in $\Omega_\delta$ for some $\delta>0$. Moreover, $w_q$ is a convex solution of 
\[
\Delta v=\frac{1}{v}\left(\frac{q+1}{q-1}|D v|^2+\frac{q-1}{2}\right)-\frac{q-1}{2} v=:b(v, D v),
\]
$t\mapsto b(t, z)$ is harmonic concave whenever it is positive, and $b_t\neq 0$ on $\R\times \R^N$, hence Corollary \ref{corstrongconv} ensures that $w_q$ is strongly convex in $\Omega\setminus \Omega_\delta$ as well. Therefore $(u_q, q)\in E$ and $E$ is also closed in ${\mathcal C}$. 
\end{proof}
 
 We are now ready to pass to the limit \eqref{LEeq} and get a $\log$-concave solution of \eqref{LSeq}, i.e.$\;$prove Theorem \ref{thm_main_log}. 
 
 \begin{proof}[Proof of Theorem \ref{thm_main_log}]
 Choose a sequence $\Omega_n\supseteq \Omega$ of smooth strongly convex sets converging in the Hausdorff sense to $\Omega$ (see \cite[Proposition 2.1]{GaSq25}) if $N\ge 2$, otherwise set $\Omega_n\equiv \Omega$. 
On such a sequence \eqref{bome} holds true uniformly in $n$. Fix a corresponding sequence $q_n \to 1^+$ and for each $n\ge 1$ apply Theorem \ref{qcsol} for $\sigma=2/(q_n-1)$ to get a solution $u_n$ of \eqref{LEeq} such that $u_n^{(1-q_n)/2}$ is convex in $\Omega_n$.
 By \eqref{ape2} in Lemma \ref{lem_apriori_est} and arguing as in Proposition \ref{prop_conv_gs}, we get that up to subsequences $u_n$ converges to a solution $u$ of \eqref{LSeq} in $C^0(\overline{\Omega})$, in $W^{1,2}_0(\Omega)$ and in $C^2_{\rm loc}(\Omega)$. 

 In order to prove that $u$ is $\log$-concave, set $\eps_n:=\frac{q_n-1}{2}>0$ and note that the function
 \[
w_n:= \frac{u_n^{-\eps_n}-1}{\eps_n}=\frac{e^{-\eps_n\log u_n}-1}{\eps_n}
\]
is convex.
Since $w_n$ converges point-wise in $\Omega$ to $-\log u$, the latter is convex. Finally, the function $w:=-\log u$, satisfies
\[
\Delta w=|Dw|^2-2\, w =: b(w, Dw) \qquad \text{in $\Omega$},
\]
\[
\big(\partial^2_t (1/b)\big)(t, z)=8/b(t, z)^3>0
\]
as long as $b(t, z)> 0$, and $b_t\neq 0$ on $\R\times \R^N$. Thus Corollary \ref{corstrongconv} ensures the strict convexity of $w$ in $\Omega$.
 \end{proof}
 
To conclude the main proofs, similarly to what we just did for problem \eqref{LSeq}, we remove the additional assumptions in $\Omega$ for problem \eqref{LEeq}.

\begin{proof}[Proof of Theorem \ref{thm_main_power}]
As in the proof of Theorem \ref{thm_main_log}, if $N\ge 2$ we choose $\Omega_n \to \Omega$ in the Hausdorff sense, $\Omega_n$ smooth and strongly convex sets fulfilling \eqref{bome} uniformly in $n$. By Theorem \ref{qcsol} 
we get a solution $u_n$ of \eqref{LEeq} such that $u_n^{(1-q)/2}$ is convex in $\Omega_n$.
 By \eqref{ape1} in Lemma \ref{lem_apriori_est} the functions $u_n$ are equi-bounded and, arguing as in Proposition \ref{prop_conv_gs}, 
 we get that up to subsequences $u_n$ converges in $C^0(\overline{\Omega})$, in $W^{1,2}_0(\Omega)$ and in $C^2_{\rm loc}(\Omega)$ to a solution $u$ of \eqref{LEeq}. Thus $u$ is a $(1-q)/2$-concave solution. 
Strict concavity follows by the same argument of the proof of Theorem \ref{thm_main_log}.
\end{proof}
 
Finally we study the equations with opposite sign \eqref{eq_pol_changed}, \eqref{eq_sign_changed}.
\begin{proof}[Proof of Theorem \ref{thm_diff_sign}]
Let us consider $f(t) = \sigma\, (t-t^q)$ or $f(t)=-t \, \log t^2$; in the first case, by considering $x\mapsto u(\sqrt{\sigma}\, x)$ instead of $u$, we can assume that $\sigma=1$. 
Existence of a positive solution $u$ can be obtained through standard methods, as minimiser of the corresponding coercive functional. Since $t \mapsto f(t)/t$ is strictly decreasing, we have that such solution is unique (see \cite{BrOs86}). 
Moreover, in both cases $f(t) \leq 0$ for $t\geq 1$, so that by the weak comparison principle $0<u\leq 1$ in $\Omega$. 
By Lemma \ref{lem_nonzero_max} we actually have $\|u\|_\infty <1$, thus $u(\Omega) \subset \ ]0, 1[$ and $f(u)>0$. 
 We thus proceed to check the assumptions of \cite{BMS22} (see also \cite{MRS24}) for $t \in\ ]0,1[$, for both the reactions $f(t)=t-t^q$ (with $q>1$) and $f(t)=-t\, \log t^2$, setting as usual $ F(t) := \int_0^t f(\tau) d \tau.$

The computations to check that $\sqrt{F}$ is (strictly) concave and $F/f$ is convex in $]0,1[$ are in both cases straightforward and omitted.
The transformation $\varphi$ is defined as in \eqref{eq_transf_BMS}, which belongs to $C^\infty( ]0, 1[)$ and \cite[Theorem 1.2]{BMS22} ensures that $\varphi(u)$ is convex in both cases. Explicit integration gives for $f(t)=t-t^q$, $q>1$
$$\varphi_1(t) \propto \operatorname{atanh}\left(\sqrt{1-\frac{2}{q+1} t^{q-1}}\right) $$
(where $\propto$ means equal up to positive multiplicative constants and additive constants), while when $f(t)=-t\, \log t^2$ 
$$\varphi_2(t) \propto \sqrt{1-\log t^2}. $$
Let us discuss the strict convexity of $\varphi(u)$, still denoting with $\varphi$ both $\varphi_1$ and $\varphi_2$.
Note that $\varphi'<0<\varphi''$ on $]0, 1[$, hence also on $u(\Omega)$.
The previous choices of $\varphi$ are made in such a way that 
\[
\psi' =-\sqrt{F(\psi)}, \qquad \psi'' = \frac{1}{2} \, f(\psi)
\]
and $w=\varphi(u)$ satisfies (see \eqref{deltaw})
$$\Delta w=\frac{f(\psi(w))}{\sqrt{F(\psi(w))}}\left(1+ \frac{1}{2}|Dw|^2\right) =: b(w, Dw).$$
In \cite[page 95]{BMS22} it is shown that the convexity of $t\mapsto \sqrt{F(\psi(t))}/f(\psi(t))$ is implied by 
the convexity of $s\mapsto F(s)/f(s)$, which has already been noted to hold in $]0, 1[$, while $b_t\neq 0$ descends from the strict concavity of $\sqrt{F}$. 
Thus Corollary \ref{corstrongconv} applies, giving the strict convexity of $\varphi(u)$ in $\Omega$ in both cases.
\end{proof}

\section{Further results}
\label{sec6}

\subsection{Bounds for solutions }
Here we will derive some a-priori estimates on solutions of the Logarithmic Schr\"odinger equation \eqref{LSeq}. We start with a lower bound, which is a straightforward application of the Pohozaev identity. 
Such information could be useful to study possible branches of solutions, which are generally parametrised by $u(0)=\|u\|_{\infty}$, see \cite[Remark 5]{Dan95}, \cite[Theorem 3.3]{DGP99}. 
\begin{lemma}
Let $\Omega\subseteq\R^N$ be bounded, star-shaped and with $C^2$ boundary. 
Then any solution of \eqref{LSeq} satisfies
\beq
\label{LSlowerbound}
\|u\|_\infty> e^{N/4}.
\eeq
\end{lemma}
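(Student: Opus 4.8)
The plan is to invoke the Pohozaev identity for the equation $-\Delta u = u \log u^2$ on a star-shaped domain $\Omega$ with $C^2$ boundary. First I would recall that, testing against $x \cdot \nabla u$, the Pohozaev identity for $-\Delta u = g(u)$ reads
\[
\frac{N-2}{2}\int_\Omega |\nabla u|^2\, dx - N\int_\Omega G(u)\, dx = -\frac12 \int_{\partial\Omega} |\nabla u|^2\, (x\cdot\nu)\, d\mathcal{H}^{N-1},
\]
where $G(t):=\int_0^t g(s)\, ds$; here $g(t)=t\log t^2$, so $G(t) = \tfrac{t^2}{2}(\log t^2 - 1)$. Since $\Omega$ is star-shaped (with respect to the origin, after translation), $x\cdot\nu \ge 0$ on $\partial\Omega$, so the right-hand side is $\le 0$. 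Combining with the equation tested against $u$ itself, namely $\int_\Omega |\nabla u|^2 = \int_\Omega u^2\log u^2$, we eliminate the Dirichlet energy: substituting gives
\[
\frac{N-2}{2}\int_\Omega u^2\log u^2\, dx - N\int_\Omega \frac{u^2}{2}(\log u^2 - 1)\, dx \le 0,
\]
which simplifies (the $\log u^2$ coefficients are $\tfrac{N-2}{2} - \tfrac{N}{2} = -1$) to
\[
-\int_\Omega u^2\log u^2\, dx + \frac{N}{2}\int_\Omega u^2\, dx \le 0,
\]
i.e. $\int_\Omega u^2\log u^2\, dx \ge \frac{N}{2}\int_\Omega u^2\, dx$.

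Next I would turn this integral inequality into a pointwise bound on $\|u\|_\infty$. Write $\log u^2 = \log(u^2/\|u\|_\infty^2) + \log\|u\|_\infty^2 \le \log\|u\|_\infty^2$ pointwise (since $u \le \|u\|_\infty$ and $\log$ is increasing, the first term is $\le 0$). Therefore
\[
\frac{N}{2}\int_\Omega u^2\, dx \le \int_\Omega u^2\log u^2\, dx \le \log\|u\|_\infty^2 \int_\Omega u^2\, dx = 2\log\|u\|_\infty \int_\Omega u^2\, dx.
\]
Since $u>0$ in $\Omega$ we have $\int_\Omega u^2\, dx > 0$, so dividing yields $\log\|u\|_\infty \ge N/4$, that is $\|u\|_\infty \ge e^{N/4}$. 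To get the strict inequality claimed in \eqref{LSlowerbound}, I would observe that equality in $\log u^2 \le \log\|u\|_\infty^2$ on the full support would force $u \equiv \|u\|_\infty$ a.e., which is incompatible with the Dirichlet condition $u=0$ on $\partial\Omega$ (and with $u$ nonconstant); hence the inequality $\int_\Omega u^2\log u^2 \le 2\log\|u\|_\infty \int_\Omega u^2$ is strict, giving $\|u\|_\infty > e^{N/4}$.

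The only genuine technical point — hardly an obstacle — is justifying the Pohozaev identity itself: one needs enough regularity of $u$ up to the boundary to integrate by parts and control the boundary term. This is standard because $\partial\Omega \in C^2$ and $u$ solves a semilinear equation with locally Lipschitz right-hand side away from $u=0$; elliptic regularity gives $u \in C^2(\Omega)\cap C^1(\overline\Omega)$ (the nonlinearity $t\log t^2$ extends continuously by $0$ at $t=0$ and is bounded near the boundary since $u$ is bounded), so the Rellich–Pohozaev computation applies verbatim. One should also note that $u^2\log u^2$ is integrable, which follows from $u\in L^\infty$ and $\Omega$ bounded, as already remarked in the paper. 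I expect this regularity bookkeeping to be the most delicate part of writing out a fully rigorous proof, but all ingredients are classical.
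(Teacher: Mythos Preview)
Your proof is correct and essentially identical to the paper's: both combine the Pohozaev identity with the Nehari identity $\int_\Omega |\nabla u|^2 = \int_\Omega u^2\log u^2$ to obtain $\int_\Omega u^2(\log u^2 - N/2)\, dx \ge 0$, and then conclude pointwise. The paper phrases the last step as ``$\log u^2 - N/2$ must be positive somewhere'', while you use the equivalent sup bound $\log u^2 \le \log\|u\|_\infty^2$ with strictness from nonconstancy; these are the same argument.
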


\begin{proof}
By Pohozaev identity
\beq
\label{poho}
\frac{N-2}{2}\int_\Omega |Du|^2\, dx= N \int_\Omega\frac{u^2\, (\log u^2-1)}{2}\, dx+\frac{1}{2}\int_{\partial\Omega} (x, n)\, \left(D u, n\right)^2\, d{\cal H}^{N-1}
\eeq
where $n$ is the interior normal to $\partial\Omega$.
By the star-shapedness of $\Omega$ it holds $(x, n)\le 0$ on $\partial\Omega$, while by the Nehari identity we have
\[
\int_\Omega |Du|^2\, dx=\int_\Omega u^2\, \log u^2\, dx.
\]
Inserting these relations into \eqref{poho}, we find
\[
\int_\Omega u^2\left(\frac{N}{2}-\log u^2\right)\, dx\le 0.
\]
It follows that $\log u^2-N/2$ (which is not constant) must be positive somewhere in $\Omega$, implying \eqref{LSlowerbound}.
\end{proof}

Regarding the upper bound, the key point is that the reaction $f(t)=t\, \log t^2$ is superlinear and {\em regularly varying}, meaning that 
\[
\lim_{t\to +\infty} \frac{f(t\, s)}{f(t)}\quad \text{exists and is finite for every $s> 0$}.
\]
Karamata's theory \cite[Theorems 1.2.1 and 1.4.1]{BGT87} ensures that, if $f$ is regularly varying, continuous and definitely positive, then the previous limit is uniform on bounded intervals and there exists $q\in \R$ such that 
\beq
\label{regvar}
 \lim_{t\to +\infty} \frac{f(t\, s)}{f(t)}=s^q\qquad \text{for all $s>0$}.
 \eeq
 In this case $q$ is called the {\em index} of $f$. 
 The superlinear reactions $f(t)=t\, \log t^2$ and $f(t)=t^q-t$ ($q>1$) considered in this manuscript are indeed regularly varying, with index $1$ and $q$ respectively.

We next report an a-priori upper bound for solutions of
\beq
\label{Gsup}
\begin{cases}
-\Delta u= f(u)&\text{in $\Omega$}\\
u>0&\text{in $\Omega$}\\
u=0&\text{on $\partial\Omega$},
\end{cases}
\eeq
for regularly varying, superlinear reactions $f$.

\begin{theorem}
\label{thm_equibound}
Let $\bar R, \bar\theta>0$ and $\Omega\subseteq \R^N$ be convex and such that
\beq
\label{bome_six}
{\rm diam}\,(\Omega)\ge 2\, \bar R, \qquad {\rm ecc}\, (\Omega)\le \bar\theta.
\eeq
Suppose $f\in C^0([0, +\infty[)$ is superlinear at infinity and regularly varying with index $q\in [1, 2^*-1]$. 
Then there exists a constant $C=C(N, \bar R, \bar \theta, f)>0$ such that any $C^2(\Omega)\cap C^0(\overline{\Omega})$-solution of \eqref{Gsup} has $L^\infty$-norm bounded by $C$.
\end{theorem}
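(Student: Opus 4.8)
The plan is to argue by contradiction through a Gidas--Spruck blow-up (\cite{GiSp81}), following closely the proof of Lemma \ref{lem_apriori_est} but replacing the explicit manipulations of the nonlinearity by Karamata's theory of regular variation. Suppose the statement fails: there are convex domains $\Omega_n$ obeying \eqref{bome_six} and solutions $u_n$ of \eqref{Gsup} with $M_n:=\|u_n\|_\infty\to+\infty$. First I would normalise the geometry as in Lemma \ref{lem_apriori_est}: rescaling $\Omega_n$ by the factor $c_n:={\rm diam}(\Omega_n)/(2\bar R)\ge 1$ turns \eqref{Gsup} into $-\Delta u=c_n^2\, f(u)$ on a domain of diameter $2\bar R$, leaving $M_n$ unchanged and affecting neither the superlinearity of $f$ nor its index $q$ in \eqref{regvar}; after a translation one may assume $B_{\bar R/\bar\theta}(0)\subseteq\Omega_n\subseteq B_{2\bar R}(0)$. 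Pick $x_n\in\Omega_n$ with $u_n(x_n)=M_n$.

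The blow-up is then set up as follows. Let $\mu_n^2:=M_n/(c_n^2\, f(M_n))$; superlinearity forces $f(M_n)/M_n\to+\infty$, hence $\mu_n\to 0$. The functions $v_n(x):=M_n^{-1}\,u_n(x_n+\mu_n x)$, defined on $\widetilde\Omega_n:=(\Omega_n-x_n)/\mu_n$, satisfy $0\le v_n\le 1=v_n(0)$, $v_n=0$ on $\partial\widetilde\Omega_n$, and
\[
-\Delta v_n=\frac{f(M_n\, v_n)}{f(M_n)}=:g_n(v_n)\qquad\text{in }\widetilde\Omega_n .
\]
By Karamata's uniform convergence theorem and the representation of regularly varying functions (see \cite{BGT87}; superlinearity guarantees $f>0$ near $+\infty$), $g_n(s)\to s^q$ uniformly for $s\in[0,1]$ and $\sup_n\sup_{[0,1]}|g_n|<+\infty$; in particular $\|\Delta v_n\|_\infty$ is bounded uniformly in $n$. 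Lemma \ref{regbconv}, whose constants depend only on $N$ and on these bounds and \emph{not} on any smoothness of $\partial\Omega_n$, then yields $\alpha\in\ ]0,1[$ and $C>0$, both independent of $n$, with $\|v_n\|_{C^\alpha(\R^N)}\le C$ after extending each $v_n$ by zero.

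Finally I would pass to the limit. By Proposition \ref{prop_blowup_domain}, applied exactly as in Step 2 of the proof of Lemma \ref{lem_apriori_est} (using the inclusions above and $\mu_n\to 0$), a subsequence of $\widetilde\Omega_n$ converges locally in the Hausdorff sense to $H$, which is either $\R^N$ or the closed epigraph of a convex function; by Ascoli--Arzel\`a, $v_n\to v$ locally uniformly. Since $g_n\to s^q$ uniformly on $[0,1]$, the limit $v$ is a bounded solution of $-\Delta v=v^q$ in $H$ with $v>0$ in $H$, $v(0)=1$ and $v=0$ on $\partial H$ (positivity following from the strong minimum principle, as $v$ is superharmonic and $v(0)=1$). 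For $q\le 2^*-1$ this is impossible: the convex-epigraph case is excluded by Theorem \ref{thm_main_liouville}, and the entire-space case by the Liouville theorem for $-\Delta v=v^q$ in $\R^N$ (\cite[Theorem 8.1]{QuSo19}). This contradiction yields the $L^\infty$-bound, and tracking the constants shows it depends only on $N$, $\bar R$, $\bar\theta$ and $f$ (through its index $q$ and the constants entering its regular variation).

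The main obstacle is the middle step: carrying the blow-up through a merely regularly varying $f$. In Lemma \ref{lem_apriori_est} the nonlinearity was explicit and all the needed bounds on $g_n$ were checked by hand; here one must instead extract from Karamata's theory both the uniform-on-$[0,1]$ convergence $g_n\to s^q$ and a uniform bound on $g_n$, the sensitive region being $s$ near $0$, where $f(M_n s)$ may be negative and $M_n s$ need not diverge. Controlling $\sup_{[0,M_n]}|f|$ by $f(M_n)$ is precisely what the representation of a regularly varying function of positive index — a positive constant times an eventually monotone function times a slowly varying factor — provides. A secondary but essential point, inherited verbatim from Lemma \ref{lem_apriori_est}, is that every constant, both in Lemma \ref{regbconv} and in Proposition \ref{prop_blowup_domain}, must be uniform over the possibly non-smooth convex $\Omega_n$, which is why only convexity of $\Omega$ (and no boundary regularity) is used throughout; the endpoint $q=2^*-1$ is exactly where the closed inequality in Theorem \ref{thm_main_liouville} is needed.
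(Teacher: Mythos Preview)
Your proposal is correct and follows essentially the same approach as the paper's own proof: contradiction via Gidas--Spruck blow-up, normalisation of the domains, rescaling by $\mu_n^2=M_n/(c_n^2 f(M_n))$, uniform $C^\alpha$ bounds from Lemma \ref{regbconv}, domain convergence via Proposition \ref{prop_blowup_domain}, and the Liouville contradiction from Theorem \ref{thm_main_liouville}. You are in fact slightly more careful than the paper in two places: you separately invoke the entire-space Liouville theorem when $H=\R^N$ (the paper's proof here only cites Theorem \ref{thm_main_liouville}, though the argument in Lemma \ref{lem_apriori_est} does split the cases), and you flag the delicate point that uniform convergence of $g_n(s)\to s^q$ on $[0,1]$ requires care near $s=0$, which the paper simply asserts.
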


\begin{proof}
The proof is a slight modification of Lemma \ref{lem_apriori_est}, so we briefly sketch it, adopting the same notations.
Let $(u_n)$ be a sequence of solutions of \eqref{Gsup} in $(\Omega_n)$ as in the assumptions. 
Thanks to \eqref{bome_six}, by rescaling and suitably translating the solutions, we can suppose that $B_{\bar R/\bar\theta}(0)\subseteq \Omega_n\subseteq B_{2\bar R}$, and $u_n$ solves \eqref{Gsup} in $\Omega_n$ with reaction $\beta_n\, f(u)$ instead of $f(u)$, for some $\beta_n\ge 1$.
Set $M_n:=\|u_n\|_\infty=u_n(x_n)$, $x_n \in \Omega_n$, and suppose by contradiction that $M_n\to +\infty$. 
Then, defining $\lambda_n>0$ through
\[
\frac{\lambda_n^2}{M_n}=\frac{1}{\beta_n\, f(M_n)}
\]
we see that 
$v_n:=\frac{1}{M_n}\, u_n(x_n+\lambda_n(\cdot-x_n))
$
solves \eqref{Gsup} in $\widetilde{\Omega}_n:=(\Omega_n-x_n)/\lambda_n$ with reaction
\[
f_n(v):=\frac{f(M_n\, v)}{f(M_n)},
\]
and it fulfils $0< v_n(x)\le v_n(0)=1$ for all $x\in \widetilde{\Omega}_n$.
Since $f$ is superlinear and $\beta_n\ge 1$, it holds $\lambda_n\to 0$ as $n\to +\infty$, hence Proposition \ref{prop_blowup_domain} ensures that $\widetilde{\Omega}_n\to H$ locally in the Hausdorff sense, where $H$ is either $\R^N$ or a convex epigraph. 
On the other hand, since $f_n(t)\to t^q$ uniformly on $[0, 1]$, we see that 
\[
\|f_n\|_{L^\infty([0, 1])}\le C(f)<\infty
\]
for all $n$. Since $\|v_n\|_\infty\le 1$, Lemma \ref{regbconv} ensures that $\|v_n\|_{C^\alpha(\R^N)}$ is uniformly bounded, for a given $\alpha\in \ ]0, 1[$ depending on $f$ alone (here as usual we extend each $v_n$ as $0$ outside $\widetilde{\Omega}_n$). 
Thus, up to subsequences, $v_n$ converges to some $v$ locally uniformly, with $v(0)=1$ and $v\equiv 0$ outside $H$. 
Since the limit in \eqref{regvar} is uniform on bounded intervals, we can pass to the limit in the equations satisfied by $v_n$ to get that $v$ satisfies weakly (and thus strongly, by local elliptic estimates) \eqref{Linp} with $0 \leq v \leq 1$. 
 Theorem \ref{thm_main_liouville} gives the sought contradiction.
\end{proof}

\begin{remark}
By making use of Theorem \ref{thm_liouville} instead of Theorem \ref{thm_main_liouville} in the conclusion we can actually obtain the same statement for all indexes $q\in [1, q_c[$, with the critical exponent $q_c$ given in \eqref{pcritfarina}.
 
Exploiting ideas similar to the proof of Theorem \ref{thm_main_power}, Theorem \ref{thm_equibound} turns out to be a basic tool allowing to transfer, for superlinear regularly varying reactions $f$, existence of $\varphi$-concave solutions to \eqref{Gsup} in smooth strongly convex domains to existence of such solutions in arbitrary convex domains, by allowing to pass to the limit through domain approximations.
Additional minimal hypotheses on the reaction, granting for instance a universal lower bound on the $L^\infty$-norm of such solutions or the validity of the strong minimum principle (see e.g.$\,$\cite[Theorem 1.1.1]{PuSe07}), would ensure non-triviality of the limiting solution. 

Note that if $f$ is a sublinear reaction, this can be more easily done by selecting solutions minimising the energy (which is coercive) and employing, during the domain approximation, $\Gamma$-convergence type arguments as is done in \cite{BMS22, MRS24}; similar arguments work also in the linear case \cite[Section 5.1.1]{GaSq25}.
\end{remark}

\begin{corollary}
\label{corlinfb}
Any solution of \eqref{LSeq} in a convex domain $\Omega$ is bounded by a constant depending only on a lower bound on ${\rm diam}\, (\Omega)$ and an upper bound on ${\rm ecc}\, (\Omega)$.
\end{corollary}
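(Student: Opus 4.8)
The plan is to obtain this as an essentially immediate consequence of Theorem \ref{thm_equibound}, applied to the specific reaction $f(t):=t\,\log t^2$, extended continuously to $t=0$ by $f(0):=0$. First I would check that this $f$ satisfies the hypotheses of that theorem: it lies in $C^0([0,+\infty[)$ because $t\,\log t^2\to 0$ as $t\to 0^+$; it is superlinear at infinity since $f(t)/t=\log t^2\to+\infty$ as $t\to+\infty$; and it is eventually positive (for $t>1$).

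Next I would verify the regular variation and the admissible index. A direct computation gives, for every $s>0$,
\[
\frac{f(ts)}{f(t)}=s\cdot\frac{\log t^2+\log s^2}{\log t^2}\longrightarrow s\qquad\text{as }t\to+\infty,
\]
so $f$ is regularly varying with index $q=1$, and $1\in[1,2^*-1]$, exactly the range covered by Theorem \ref{thm_equibound}. Since, by definition, any solution of \eqref{LSeq} is a $C^2(\Omega)\cap C^0(\overline{\Omega})$ solution of \eqref{Gsup} with this choice of $f$, Theorem \ref{thm_equibound} produces a constant $C=C(N,\bar R,\bar\theta,f)>0$ that bounds its $L^\infty$-norm, provided $\Omega$ satisfies \eqref{bome_six}.

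The last step is just bookkeeping on the dependence of the constant: $N$ is the (fixed) ambient dimension and $f$ is a fixed universal reaction in this statement, so $C$ depends only on $\bar R$ and $\bar\theta$, i.e.\ on a lower bound for ${\rm diam}\,(\Omega)$ and an upper bound for ${\rm ecc}\,(\Omega)$, as claimed. I do not expect a genuine obstacle here — all the substance is already packaged in Theorem \ref{thm_equibound} (itself resting on the Liouville Theorem \ref{thm_main_liouville}). The only point deserving a line of care is confirming that the uniform convergence $f(M_n\,\cdot)/f(M_n)\to(\cdot)^{q}$ on $[0,1]$ invoked in the proof of that theorem does hold for $f(t)=t\log t^2$ all the way down to $t=0$, which is immediate from the uniform bound $t\,|\log t^2|\le C$ on $[0,1]$ together with $f(M_n)\to+\infty$.
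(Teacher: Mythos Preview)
Your proposal is correct and matches the paper's approach: the corollary is stated immediately after Theorem \ref{thm_equibound} without a separate proof, and the paper explicitly remarks just before that theorem that $f(t)=t\,\log t^2$ is superlinear and regularly varying with index $1$, so the intended argument is precisely the verification you carry out.
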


We will see in Remark \ref{optbounds} below that Corollary \ref{corlinfb} is optimal in its geometric constraints.

\subsection{Radial symmetry} 
\label{sec_radial_sym}

In this Section we briefly discuss the symmetry and monotonicity of solutions to \eqref{LSeq}.

We start by noting that, being $f(t)= t^q-t$ locally Lipschitz, one can apply standard results to get radial symmetry and monotonicity with respect to the axis in symmetric domains. 
On the other hand, $f(t)= t \log t^2$ is neither positive near the origin, nor sum of a locally Lipschitz and non-decreasing function, which means that \cite{GNN79} cannot be directly applied.

We further mention that, in general, a merely Hölder continuous reaction $f$ does not lead to the symmetry of \emph{nonnegative} solutions of $-\Delta u =f(u)$: see for example \cite[Section 6.1.3]{DaPa19} where they present a counterexample with $f(t)=t^q-t^r$ with $0<r<q<1$ and a solution with compact support. 

On the other hand $u$ cannot have a plateau (i.e.$\;$a level set of positive measure) at $t=0$ due to the strong maximum principle, which holds for non-negative solutions of $-\Delta u=f(u)$ as long as $f(0)=0$, $f$ is decreasing on $[0, \delta[$ for some $\delta >0$ and 
 \[
 \int_0^\delta\frac{1}{\sqrt{-F(t)}}\, dt =+\infty,
 \]
 (see \cite[Theorem 1.1.1]{PuSe07}). The latter condition is readily checked for $f(t)=t\, \log t^2$.
Moreover, $f(t)=t\, \log t^2$ is regular at $t=1$, which is its only positive vanishing point, hence $u$ cannot have plateaus at positive values. Therefore we can apply \cite[Theorem 2]{DoFe04} and obtain the following.

\begin{theorem}\label{radiality}
Let $\Omega =B \subset \R^N$ be a ball centred at $0$ and let $u\in C^2(B)\cap C(\overline{B})$ be a solution of \eqref{LSeq}. Then $u$ is radially symmetric and radially decreasing. 
\end{theorem}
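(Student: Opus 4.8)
The plan is to invoke the moving plane method in the form of \cite[Theorem 2]{DoFe04}, which is tailored precisely to nonnegative solutions of $-\Delta u = f(u)$ on a ball with merely continuous $f$, provided $u$ has no ``plateaus'' --- i.e.\ no level set of positive measure. So the work consists in verifying the hypotheses of that theorem for $f(t) = t\,\log t^2$, most of which has already been assembled in the discussion preceding the statement. First I would record that $u \in C^2(B) \cap C(\overline B)$ is a classical solution, $u > 0$ in $B$ and $u = 0$ on $\partial B$; in particular $u$ attains all values in $]0, \|u\|_\infty]$. The symmetry result then yields radial symmetry about the center $0$ and radial monotonicity $\partial_r u < 0$ for $r \in \ ]0, R[$.

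The two delicate points are the absence of plateaus at $t = 0$ and at positive levels. For the level $t = 0$: a nonnegative solution could in principle have a ``dead core'' $\{u = 0\}$ of positive measure, which would wreck the moving plane argument. To exclude this, I would apply the strong maximum principle of Pucci--Serrin \cite[Theorem 1.1.1]{PuSe07}: since $f(0) = 0$, $f$ is (strictly) decreasing on $[0, \delta[$ for small $\delta > 0$ (as $f'(t) = \log t^2 + 2 \to -\infty$ as $t \to 0^+$), and $F(t) = \int_0^t \tau\log\tau^2\, d\tau = \tfrac12 t^2(\log t^2 - 1) < 0$ for small $t > 0$ with $-F(t) \sim \tfrac12 t^2|\log t^2|$, one checks $\int_0^\delta (-F(t))^{-1/2}\, dt = +\infty$. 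Hence $u$ cannot vanish on a set of positive measure, and in fact $u > 0$ throughout $B$, which we already know. For positive levels: the only zero of $f$ in $]0, +\infty[$ is $t = 1$, and $f$ is real-analytic near every point of $]0, +\infty[$, so by unique continuation (or directly, since $-\Delta u = f(u)$ with $f$ Lipschitz on $[\tfrac12\epsilon, 2\|u\|_\infty]$ away from $0$) $u$ cannot be constant on any open set, hence no level set $\{u = c\}$ with $c > 0$ has positive measure. Thus all plateau-type hypotheses of \cite[Theorem 2]{DoFe04} are met.

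With these verifications in hand, the conclusion is immediate: \cite[Theorem 2]{DoFe04} applies and gives that $u$ is radially symmetric about the center of $B$ and strictly radially decreasing. The main obstacle --- really the only non-routine point --- is the $t = 0$ plateau exclusion, since the sign-changing and non-Lipschitz nature of $f$ near the origin is exactly what blocks the classical Gidas--Ni--Nirenberg argument; but the Pucci--Serrin strong maximum principle, with the explicit computation of $F$ near $0$ showing the Osgood-type divergence of $\int_0^\delta (-F)^{-1/2}$, resolves it cleanly. Everything else is a direct citation.
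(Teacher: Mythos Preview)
Your proposal is correct and follows essentially the same route as the paper: the paper also verifies the absence of plateaus at $t=0$ via the Pucci--Serrin strong maximum principle (checking the divergence of $\int_0^\delta(-F)^{-1/2}\,dt$) and at positive levels via the regularity of $f$ near its only positive zero $t=1$, and then invokes \cite[Theorem 2]{DoFe04}.
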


While radial monotonicity readily implies quasi-concavity of solutions to \eqref{LSeq}, a precise functionally quantitative concave behaviour is not clear even in the radial case. To clarify this point recall that, exploiting the radial symmetry of the solution, $\log$-concavity of the first eigenfunction of the Laplacian has been obtained in an elementary way in \cite{Lind94}. 
Indeed, if $u$ is radial and solves $-\Delta u = f(u)$ in $B$, 
then $v(r):= \log u (r)$ verifies
$$- r^N \, u^2\, \ddot{v} = r^N \left(f(u)\, u - 2\, F(u)\right) + \int_0^r t^{N-1} \dot{u}^2\, dt + \int_0^r t^{N-1} \big( 2\, N\, F(u) - (N-1) \, f(u)\, u\big)\, dt.$$
Thus $v$ is radially concave (and thus concave, as long as $\dot{v}\leq 0$)
$$f(u)\, u - 2\, F(u) \geq 0 \quad \hbox{and} \quad 2\, N \, F(u) - (N-1) \, f(u)\, u \geq 0.$$
If $f(u)= \lambda_1\, u$ (as in \cite{Lind94}) the above are clearly satisfied.
If $f(u)= u\, \log u^2$, the first one holds true but the second one is equivalent to $\log u^2 \geq N$. 
Thus, at least in this way, we cannot directly obtain $\log$-concavity even in the ball.

In the ball, additionally, Lindqvist \cite{Lind94} shows that eigenfunctions are more than $\log$-concave, actually $\alpha$-concave for some implicit $\alpha> 1/N$ (e.g., $\alpha >(\sqrt{3}+2)/4 \approx 0.93$ for $N=2$): it remains open the question if, in the unit ball, the solutions of \eqref{LSeq} are more than $\log$-concave (see also Theorem \ref{thm_dim_1} below for the one-dimensional case).
Numerical computations suggest that indeed the solution of the logarithmic equation is $\alpha$-concave for some $\alpha>0$ (see Figure \ref{fig_power_sphere}); contrary to the case of the eigenfunction, the optimal $\alpha$-concavity exponent seems to decrease as the radius of the ball increases. 
Similar computations hold also for rectangles (recall, in this case, that the best exponent for the eigenfunction is $1/2$):
as a matter of fact we will actually show that in hyperrectangles the solutions are $\alpha$-concave, even if such $\alpha$ is not explicit, and $\alpha$ depends on the size of $\Omega$; 
see Theorem \ref{opt_thm} below.

 \begin{figure}
\begin{center}
\includegraphics[scale=0.3]{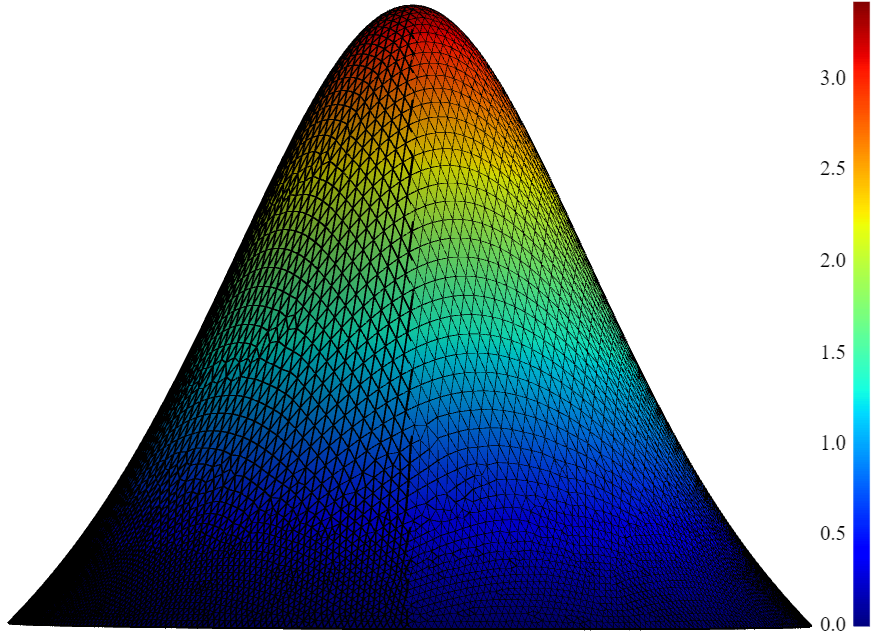} $\quad$
\includegraphics[scale=0.3]{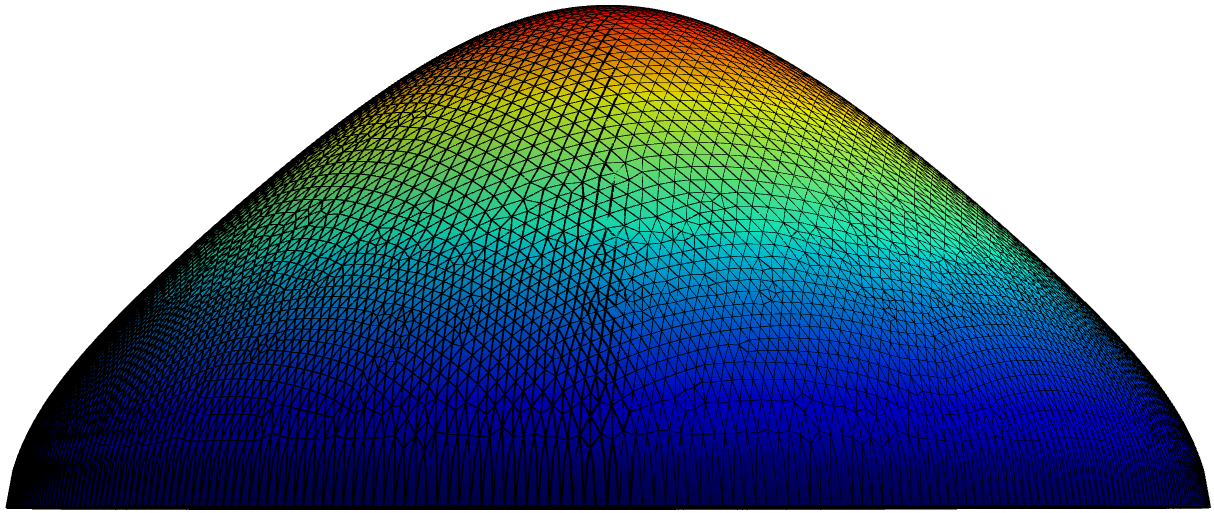}
\footnotesize{\caption{{\small
Graphs of $u$ and $\sqrt{u}$, where $-\Delta u = u \log u^2$ in $B_2(0) \subset \R^2$.
\label{fig_power_sphere}
}}}
\end{center}
\end{figure}

\subsection{The one-dimensional case: optimality} 
\label{sec_onedim}

In this Section we make an elementary analysis of the one-dimensional case. 
Namely, let $b>0$ and consider
\begin{equation}\label{eq_sistem_dim_1}
\begin{cases}
-u'' = u \log u^2 & \quad \hbox{ in $]-b,b[$}, \\
u(-b)=0=u(b). &
\end{cases}
\end{equation}

\begin{theorem}
\label{thm_dim_1}
There exists a unique positive solution of \eqref{eq_sistem_dim_1}, which is radial and radially decreasing, $u(0)=\|u\|_\infty> \sqrt{e}$, concave in $]0, x^*[$ and convex in $]x^*, b[$ for some $x^* \in\ ]0,b[$ where $u(x^*)=1$. 
Moreover, for $\alpha\in \ ]0, 1[$, $u$ is $\alpha$-concave if and only if
\beq
\label{condalphaconc}
(1-\alpha)\, |u'(b)|^2\ge \alpha\, e^{-1/\alpha}
\eeq
where, in addition, $|u'(b)|^2 = \|u\|_\infty^2\, (\log \|u\|_\infty^2-1)$. 
Moreover, the function 
\[
\varphi(u):=-\sqrt{-\log\frac{u}{\|u\|_\infty}}
\]
is concave.
\end{theorem}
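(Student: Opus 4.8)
The plan is to reduce the assertion to a one-variable ODE computation on $[0,b]$, using the already-established facts that $u$ is even, strictly decreasing on $[0,b]$, with $u(0)=m:=\|u\|_\infty>\sqrt{e}$. First I would record the first integral of the equation: multiplying $-u''=u\log u^2$ by $u'$ and integrating from $0$ (where $u'(0)=0$) gives
\[
(u')^2 = 2\big(G(m)-G(u)\big), \qquad G(v):=\frac{v^2}{2}\big(\log v^2-1\big).
\]
This is the same identity behind the formula $|u'(b)|^2=m^2(\log m^2-1)$, and it also shows $L:=\log m^2>1$, since the left-hand side is positive and $u$ is nonconstant.

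Next I would set $w(x):=\varphi(u(x))=-\sqrt{-\log(u(x)/m)}$, which is $C^2$ on $(0,b)$ because $0<u<m$ there, and switch to the variable $s:=-\log(u/m)>0$, so that $u=m\,e^{-s}$ and $s$ increases from $0$ to $+\infty$ as $x$ runs over $(0,b)$. A routine differentiation gives $\varphi'(u)=\dfrac{1}{2u\sqrt{s}}$ and $\varphi''(u)=\dfrac{1-2s}{4u^2 s^{3/2}}$, while $u''=-u\log u^2=u(2s-L)$, and from the first integral together with $m^2=u^2 e^{2s}$ one gets $(u')^2=u^2\big[(L-1)(e^{2s}-1)+2s\big]$. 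Substituting all of this into $w''=\varphi''(u)(u')^2+\varphi'(u)u''$, the $u$-dependence cancels and, collecting terms, everything collapses to
\[
w'' = \frac{L-1}{4\,s^{3/2}}\,\big(e^{2s}(1-2s)-1\big).
\]
Since $p(s):=e^{2s}(1-2s)-1$ satisfies $p(0)=0$ and $p'(s)=-4s\,e^{2s}<0$ for $s>0$, it is strictly negative on $(0,+\infty)$; combined with $L>1$ this yields $w''<0$ on $(0,b)$, hence $w$ is strictly concave on $(0,b)$ and, by evenness of $u$, also on $(-b,0)$.

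It remains to glue the two halves at the origin. From $u''(0)=-m\log m^2<0$ we have $u(x)=m-\frac{mL}{2}x^2+o(x^2)$, so that $-\log(u(x)/m)=\frac{L}{2}x^2+o(x^2)$ and $w(x)=-\sqrt{L/2}\,|x|+o(|x|)$; thus $w$ is continuous at $0$ with a downward corner, $w'(0^-)=\sqrt{L/2}>-\sqrt{L/2}=w'(0^+)$. Together with the strict concavity on each side, this makes the a.e.-defined derivative $w'$ nonincreasing on $(-b,b)$, so $w$ is concave there. The only step that is not bookkeeping is the algebraic collapse producing the displayed expression for $w''$; it hinges on choosing the logarithm $s=-\log(u/m)$, rather than any power of $u$, as the change of variable — consistent with the absence of any power concavity for these solutions.
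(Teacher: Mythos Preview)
Your computation for the concavity of $w=\varphi(u)=-\sqrt{-\log(u/m)}$ is correct and is essentially the paper's own argument: both use the first integral $(u')^2=2(G(m)-G(u))$ and compute $w''=\varphi''(u)(u')^2+\varphi'(u)u''$. Your substitution $s=-\log(u/m)$ gives the clean factorization
\[
w''=\frac{L-1}{4\,s^{3/2}}\big(e^{2s}(1-2s)-1\big),
\]
whereas the paper checks the equivalent inequality $(\log(t^2/m^2)+1)\,t^2/m^2\le 1$ directly; these are the same computation in different coordinates. The corner analysis at $x=0$ is a nice addition that the paper leaves implicit.

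However, you have only proved the \emph{last} assertion of the theorem. The main quantitative content --- the characterisation \eqref{condalphaconc} of $\alpha$-concavity --- is entirely missing from your proposal. The paper obtains it by the same first-integral method applied to $\varphi(t)=t^\alpha$: one finds that $u^\alpha$ is concave iff
\[
2C+u^2\Big(1-\frac{\alpha}{\alpha-1}\log u^2\Big)\ge 0\quad\text{on }]0,m],
\]
and the minimum of the left-hand side over $u\in]0,m]$ is attained at $u_0=e^{-1/(2\alpha)}$ (which lies in the range of $u$ since $u_0<\sqrt e<m$), giving exactly \eqref{condalphaconc}. You also omit the concave/convex split at $x^*$, which is the one-line observation $u''\ge 0\iff u\le 1$. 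Finally, your closing remark about ``the absence of any power concavity for these solutions'' contradicts the theorem you are proving: $u$ \emph{is} $\alpha$-concave for all $\alpha$ satisfying \eqref{condalphaconc}, and Lemma~\ref{lem_alphab} shows such $\alpha$ always exist.
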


\begin{proof}
Existence and uniqueness follow from \cite{SanUbil}, while symmetry and strict monotonicity from Theorem \ref{radiality}.
Multiplying the equation by $u'$ and integrating we obtain
\begin{equation}\label{eq_relaz_u'}
\frac{|u'|^2}{2}+F(u(t))\equiv C;
\end{equation}
where as usual
\beq
\label{defiF}
F(t):=\int_0^t \tau \log \tau^2\, d\tau = \frac{1}{2}\, t^2\, (\log t^2-1).
\eeq
By Hopf lemma 
\[
C=\frac{|u'(b)|^2}{2}>0
\]
so that $u'(0)=0$ and $u(0)=\|u\|_\infty$ satisfies
\[
F(u(0))=C>0
\]
implying that $\|u\|_\infty>\sqrt{e}$ (improving \eqref{LSlowerbound} for $N=1$).
The concavity statement follows from the monotonicity and symmetry of $u$ since
$$u'' \geq 0 \iff u \leq 1$$
so that $u$ changes convexity only at the two symmetric points $\pm x^*$, where $u(\pm x^*)=1$.

Let us focus on $\varphi$-concavity of $u$, for an increasing concave transformation $\varphi$. Setting $v:=\varphi(u)$ and using \eqref{eq_relaz_u'}, we have
\begin{align}
v''&=\varphi''(u)\, |u'|^2+\varphi'(u)\, u'' \notag\\ 
&=2\, \varphi''(u)\, (C-F(u)) -\varphi'(u)\, f(u) \label{eq_noconcav}\\
&=\varphi''(u)\left( 2\, C+u^2\left(1-\left(1+ \frac{\varphi'(u)}{\varphi''(u)\, u}\right) \log u^2\right)\right). \notag
\end{align}
If $\varphi''\le 0$ , then $v$ is concave if and only if 
\[
2\, C+u^2\left(1-\left(1+ \frac{\varphi'(u)}{\varphi''(u)\, u}\right) \log u^2\right)
\ge 0.
\]
For $\varphi(t)=t^\alpha$, $\alpha\in \ ]0, 1[$ the previous condition reads 
\[
2\, C+ u^2\,\left(1-\frac{\alpha}{\alpha-1} \log u^2\right) \ge 0.
\]
The minimum of the so-defined function is achieved at $u_{0}=e^{-1/(2\alpha)}$, which is assumed by $u$ since $u_{0}\in [0, \sqrt{e}]$. Therefore the concavity of $v$ is equivalent to 
\[
2\, C+ e^{-\frac{1}{\alpha}}\frac{\alpha}{\alpha-1}\ge 0
\]
which, recalling the definition of $C$, rewrites as \eqref{condalphaconc}.

Finally, observed that the transformation 
\[
\varphi(t):=-\sqrt{-\log \dfrac{t}{m}}, \quad m:=\|u\|_\infty,
\]
is not concave in the whole $]0,m]$, we compute the first identity in \eqref{eq_noconcav} directly, obtaining that concavity of $v$ is equivalent to (recall $C=F(m)$ and $m>\sqrt{e}$)
$$ \left(\log\frac{t^2}{m^2}+1\right)\frac{t^2}{m^2} - 1 \leq 0$$
which is indeed verified for each $t \in \ ]0,m]$.
\end{proof}

We do not know whether the radial solutions $u$ of \eqref{LSeq} in balls of arbitrary dimension have the property that $-\sqrt{-\log (u/\|u\|_\infty)}$ are concave, but numerical simulations suggest this is the case, see Figure \ref{fig_sqrtlog_sphere}.
 \begin{figure}
\begin{center}
\includegraphics[scale=0.3]{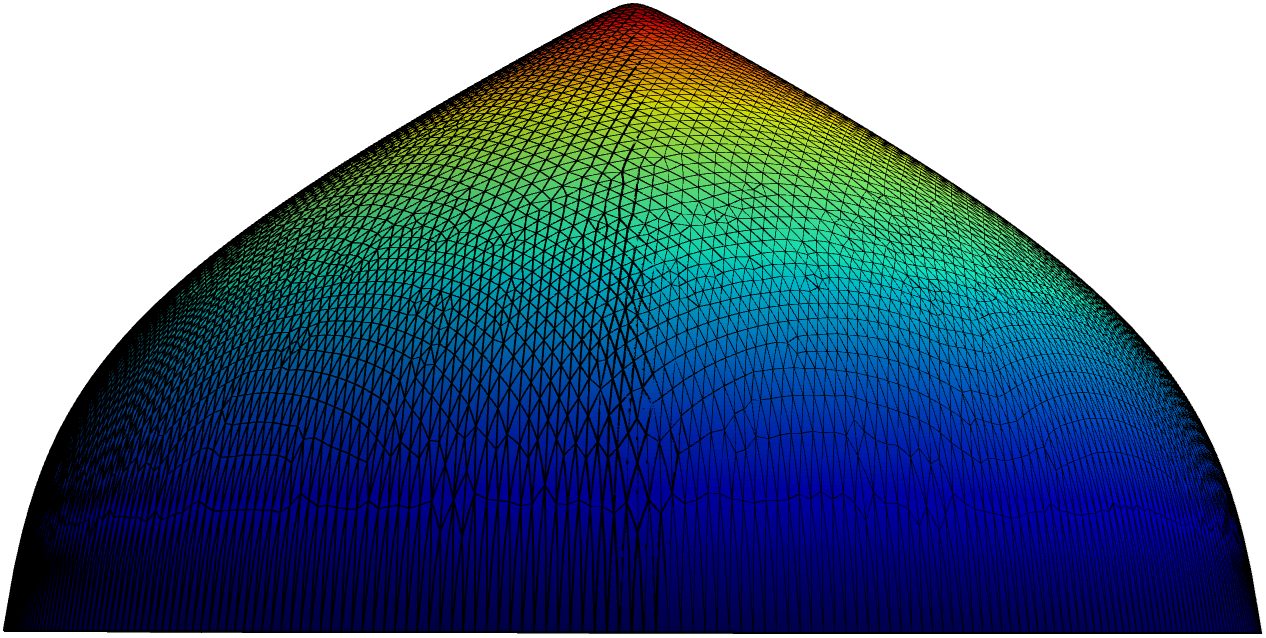}
\footnotesize{\caption{\small{
Graph of $-\sqrt{-\log(u/\|u\|_\infty)}$, where $-\Delta u = u \log u^2$ in $B_2(0) \subset \R^2$.
\label{fig_sqrtlog_sphere}
}}}
\end{center}
\end{figure}
\begin{lemma}
\label{lem_alphab}
Let $u_b$ be the unique positive solution of \eqref{eq_sistem_dim_1}. Then $b\mapsto u'_b(-b)$ and $b\mapsto \|u_b\|_\infty$ are non-increasing and 
\[
\lim_{b\to \infty} u_b'(-b)=0, \qquad \lim_{b\to 0^+} u_b'(-b)=+\infty, \qquad \lim_{b\to \infty}\|u_b\|_\infty= \sqrt{e}, \qquad \lim_{b\to 0^+}\|u_b\|_\infty=+\infty.
\]
As a consequence, the optimal value $\alpha(b)\in\ ]0,1[$ granting equality in \eqref{condalphaconc} verifies
$$\lim_{b\to \infty} \alpha(b)=0, \quad \lim_{b\to 0^+} \alpha(b)= 1.$$
\end{lemma}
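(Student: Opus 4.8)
The plan is to read off every assertion from the time-map of the autonomous equation. Write $m(b):=\|u_b\|_\infty=u_b(0)$; from \eqref{eq_relaz_u'} and the proof of Theorem~\ref{thm_dim_1} we have the conserved energy $\tfrac12(u_b')^2+F(u_b)\equiv C(b)=\tfrac12|u_b'(b)|^2=F(m(b))$, with $F$ as in \eqref{defiF} and $m(b)>\sqrt e$; in particular $u_b'(-b)^2=|u_b'(b)|^2=2F(m(b))$. Since $u_b$ is even and radially decreasing, $u_b'=-\sqrt{2(F(m(b))-F(u_b))}$ on $[0,b]$, so separating variables and substituting $s=m\sigma$ yields the implicit relation $b=T(m(b))$, where for $m>\sqrt e$
\[
T(m):=\int_0^1\frac{d\sigma}{\sqrt{(1-\sigma^2)(\log m^2-1)-\sigma^2\log\sigma^2}}.
\]
For $\sigma\in(0,1)$ and $m>\sqrt e$ both summands under the square root are strictly positive; the integrand is bounded near $\sigma=0$ and has only the integrable singularity at $\sigma=1$, where the radicand behaves like $2(1-\sigma)\log m^2$, so $T$ is well defined on $(\sqrt e,\infty)$.

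First I would establish the qualitative behaviour of $T$. On each compact $[a,A]\subset(\sqrt e,\infty)$ the integrand is dominated by $C_a/\sqrt{1-\sigma}\in L^1(0,1)$, so by dominated convergence $T$ is continuous on $(\sqrt e,\infty)$ and $\lim_{m\to\infty}T(m)=0$ (the integrand tends to $0$ pointwise). As $m\downarrow\sqrt e$ the integrand increases monotonically to $1/(\sigma\sqrt{-\log\sigma^2})$, whose integral over $(0,1)$ equals $\int_0^\infty dt/\sqrt{2t}=+\infty$ after the substitution $\sigma=e^{-t}$, so $T(m)\to+\infty$. The key structural point is that the uniqueness part of Theorem~\ref{thm_dim_1} forces $T$ to be injective (two values $m_1\ne m_2$ with $T(m_1)=T(m_2)=b$ would give two distinct positive solutions of \eqref{eq_sistem_dim_1}), and moreover $T(m(b))=b$ for every $b>0$. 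A continuous injection on an interval is strictly monotone, and the two limits show $T$ is strictly decreasing; hence $T$ is a homeomorphism of $(\sqrt e,\infty)$ onto $(0,\infty)$, and $b\mapsto m(b)=T^{-1}(b)$ is strictly decreasing with $\lim_{b\to0^+}m(b)=+\infty$ and $\lim_{b\to\infty}m(b)=\sqrt e$. Since $F$ is strictly increasing on $(\sqrt e,\infty)$ with $2F(\sqrt e)=0$, the identity $u_b'(-b)=\sqrt{2F(m(b))}$ immediately gives that $b\mapsto u_b'(-b)$ is strictly decreasing, with limit $+\infty$ as $b\to0^+$ and $0$ as $b\to\infty$.

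For the optimal exponent, equality in \eqref{condalphaconc} reads $|u_b'(b)|^2=g(\alpha)$ with $g(\alpha):=\dfrac{\alpha}{1-\alpha}\,e^{-1/\alpha}$ on $(0,1)$. A direct check gives $(\log g)'(\alpha)=\tfrac1\alpha+\tfrac1{1-\alpha}+\tfrac1{\alpha^2}>0$, $g(0^+)=0$ and $g(1^-)=+\infty$, so $g$ is an increasing homeomorphism of $(0,1)$ onto $(0,\infty)$; consequently $\alpha(b):=g^{-1}(|u_b'(b)|^2)\in(0,1)$ is the unique solution of \eqref{condalphaconc} at equality (and, since $g$ is increasing, $u_b$ is $\alpha$-concave precisely for $\alpha\le\alpha(b)$). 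Because $|u_b'(b)|^2=2F(m(b))$ tends to $0^+$ as $b\to\infty$ and to $+\infty$ as $b\to0^+$, the continuity of $g^{-1}$ together with $g^{-1}(0^+)=0$ and $g^{-1}(+\infty)=1$ yields $\lim_{b\to\infty}\alpha(b)=0$ and $\lim_{b\to0^+}\alpha(b)=1$.

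\textbf{Main obstacle.} The only genuinely delicate part is the analysis of the time-map integral near its endpoints: producing an $L^1(0,1)$ majorant that is uniform for $m$ in compact subintervals (hence continuity of $T$ and the limit at $m=\infty$), and identifying the correct non-integrable behaviour of the limiting integrand as $m\to\sqrt e^+$, which is exactly what pins down $m(b)\to\sqrt e$. Everything else is soft once one notices that the already proved uniqueness of the boundary value problem upgrades injectivity of $T$ to strict monotonicity. One could instead try to differentiate $T$ under the integral sign and prove $T'<0$ directly, but that computation is considerably heavier and is not needed here.
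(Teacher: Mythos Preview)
Your argument is correct and follows the same time-map framework as the paper: both derive the implicit relation $b=T(m(b))$ from the energy identity, both use dominated convergence for the limits of $T$, and both deduce the behaviour of $u_b'(-b)$ from $u_b'(-b)^2=2F(m(b))$.

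The one substantive difference is in how monotonicity of $T$ is established. The paper simply rewrites the time map as
\[
T(m)=\int_0^1\big((1-s^2)\log m - F(s)-\tfrac12\big)^{-1/2}\,ds
\]
and observes that the integrand is pointwise decreasing in $m$, which gives $T$ strictly decreasing in one line. You instead invoke the uniqueness statement of Theorem~\ref{thm_dim_1} to conclude that $T$ is injective, and then use continuity together with the endpoint limits $T(\sqrt e^+)=+\infty$, $T(\infty)=0$ to force strict monotonicity. Your route is perfectly valid and has the appeal of recycling an already-proved fact, but it is slightly circuitous: the direct monotonicity of the integrand is immediate (no differentiation under the integral is needed, contrary to your closing remark), and it yields the result without appealing to the external uniqueness reference \cite{SanUbil}. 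Either way, the conclusion and the treatment of $\alpha(b)$ via the homeomorphism $g(\alpha)=\tfrac{\alpha}{1-\alpha}e^{-1/\alpha}$ are the same.
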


\begin{proof}
 Set for brevity $m(b)=u_b(0)=\|u_b\|_\infty$ and $F(t)=t^2\, (\log t^2-1)/2$ as in the proof of Theorem \ref{thm_dim_1}.
We solve equation \eqref{eq_relaz_u'} (with $C=|u'(-b)|^2/2$) at $x=0$ to get
\beq
\label{eq_relaz_u'_bis}
F(m(b))=|u'(-b)|^2/2.
\eeq
Note that $m(b)\ge \sqrt{e}$ for all $b>0$ by Theorem \ref{thm_dim_1} and that $F$ is strictly increasing on $[\sqrt{e}, +\infty[$. 
Since $u'(-b)>0$ for all $b>0$, $F$ vanishes only at $\sqrt{e}$ and $F(t)\to +\infty$ for $t\to +\infty$, it suffices to prove the limits 
\beq
\label{limmb}
\lim_{b\to \infty} m(b)= \sqrt{e}, \quad \lim_{b\to 0^{+}} m(b)=+\infty
\eeq
to obtain the claimed limits for $u_b'(-b)$.

For $x\in \ ]-b, 0[$ we also have from \eqref{eq_relaz_u'}
\[
\frac{1}{u'(x)}=\left(2\, F(m(b))-2\, F(u(x))\right)^{-1/2}.
\]
By changing variable $t=u(x)$, which is regular and increasing on $]-b, 0[$, we thus obtain
\beq
\label{impl}
b=\int_{-b}^0\, dx=\int_{0}^{m(b)}\left(2\, F(m(b))-2\, F(t)\right)^{-1/2} dt.
\eeq
The previous integral (up to the factor $1/\sqrt{2}$) can be rewritten by changing variable $t=m(b) s$ and recalling the definition \eqref{defiF} of $F$ as
\[
\begin{split}
\int_{0}^{m(b)}\left(F(m(b))-F(t)\right)^{-1/2} dt&=m(b)\int_0^1\left(F(m(b))-F(m(b) s)\right)^{-1/2} ds\\
&=\int_0^1\left((1-s^2)\log m(b)-F(s)-1/2\right)^{-1/2}\, ds
\end{split}
\]
which shows through \eqref{impl} that $b\mapsto m(b)$ is non-increasing. 
From \eqref{eq_relaz_u'_bis} and the strict monotonicity of $F$ on $[\sqrt{e}, +\infty[$, we infer that $b\mapsto u_b'(-b)$ is non-increasing as well.
Let then $b\to +\infty$ so that $m(b)\to m$. 
By \eqref{impl} we have
\[
+\infty=\lim_{b\to +\infty} \int_{0}^{m(b)}\left(F(m(b))-F(t)\right)^{-1/2} dt,
\]
but if $F(m)\ne 0$ we can apply dominated convergence to get
\[
\lim_{b\to +\infty}\int_0^{m(b)}\left(F(m(b))-F(t)\right)^{-1/2} dt= \int_0^{m}\left(F(m)-F(t)\right)^{-1/2} dt<+\infty
\]
reaching a contradiction. Hence $F(m)=0$ and $m=\sqrt{e}$, giving the first limit in \eqref{limmb}. Finally, suppose $m(b)\to M<\infty$ as $b\to 0^+$. Then taking the limit in \eqref{impl} for $b\to 0$ forces, again by dominated convergence,
\[
0=\int_0^M(F(M)-F(t))^{-1/2}\, dt
\]
which is still a contradiction. Thus the second limit in \eqref{limmb} is proved as well.
\end{proof}

As a consequence of Theorem \ref{thm_dim_1}, Lemma \ref{lem_alphab} and the tensorization property, we obtain the following result.

\begin{theorem}[Optimality]\label{opt_thm}
 For any $\alpha\in\ ]0, 1/N[$ there exists a convex bounded domain $\Omega=\Omega(\alpha)\subseteq \R^N$ and a solution of \eqref{LSeq} which is $\alpha$-concave but not $\beta$-concave for any $\beta>\alpha$.
\end{theorem}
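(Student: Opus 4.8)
The plan is to leverage the one-dimensional analysis together with the tensorization property of the Logarithmic Schr\"odinger equation. The starting point is Theorem \ref{thm_dim_1}, which gives, for the interval $]-b, b[$, a positive solution $u_b$ that is $\alpha$-concave precisely when \eqref{condalphaconc} holds, and Lemma \ref{lem_alphab}, which shows the optimal exponent $\alpha(b)$ in that inequality satisfies $\alpha(b)\to 0$ as $b\to\infty$ and $\alpha(b)\to 1$ as $b\to 0^+$. Since $b\mapsto \alpha(b)$ is (by monotonicity of the relevant quantities in Lemma \ref{lem_alphab}) continuous and takes all values in $]0,1[$, given any target $a\in \ ]0,1/N[$ I would pick $b=b(a)$ with $\alpha(b)=Na$, and consider the one-dimensional solution $u_{b}$ on $I_b:=\ ]-b,b[$ which is then exactly $Na$-concave: it is $(Na)$-concave but not $\beta$-concave for any $\beta>Na$.

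Next I would tensorize. Set $\Omega:=I_b^N=(\,]-b,b[\,)^N\subseteq\R^N$, which is convex and bounded, and define
\[
u(x_1,\dots,x_N):=\prod_{i=1}^N u_b(x_i).
\]
By the tensorization property recalled in the Introduction (if $u_i$ solves \eqref{LSeq} on $\Omega_i$ then $u_1\otimes u_2$ solves it on $\Omega_1\times\Omega_2$, applied inductively), $u$ solves \eqref{LSeq} on $\Omega$, with $u>0$ in $\Omega$ and $u=0$ on $\partial\Omega$. It remains to compute the optimal power concavity of this product. For the positive direction, recall the elementary fact that if each $u_b$ is $\gamma$-concave on $I_b$ with $\gamma=Na\le 1$, then for $x,y\in\Omega$ and $t\in[0,1]$ one has, coordinatewise, $u_b(tx_i+(1-t)y_i)\ge \big(t\,u_b(x_i)^{\gamma}+(1-t)u_b(y_i)^{\gamma}\big)^{1/\gamma}$; taking the product over $i$ and applying the (generalized) arithmetic–geometric / Pr\'ekopa–Leindler-type inequality for $p$-means with exponents summing appropriately shows that $u^{\,a}=\prod_i u_b(x_i)^{a}$ is concave, i.e.\ $u$ is $a$-concave. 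Concretely, $\log(u^a)=a\sum_i \log u_b(x_i)$, and since each $\log u_b$ is concave on $I_b$ (recall $0$-concavity is weaker than $(Na)$-concavity), one gets at least $\log$-concavity for free; the sharper statement that the \emph{sum of $N$ copies of a $\gamma$-concave function of one variable} is $(\gamma/N)$-concave is exactly the classical tensor inequality for power concavity (see e.g.\ the discussion around sums in \cite{GaSq24}), yielding $a$-concavity of $u$ with $a=\gamma/N=\alpha(b)/N$.

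Finally, for the optimality (non-$\beta$-concavity for $\beta>a$), I would restrict $u$ to the diagonal-type slice where all but one coordinate are frozen at $0^+$ is not available since $u$ vanishes there; instead restrict to a line parallel to the $x_1$-axis through an interior point, say $x_2=\dots=x_N=0$. Along this line $u$ equals $u_b(0)^{N-1}\,u_b(x_1)$, a positive multiple of $u_b$, so its optimal one-dimensional power concavity is exactly $\alpha(b)=Na$. If $u$ were $\beta$-concave on $\Omega$ for some $\beta>a$, its restriction to this line would be $\beta$-concave, hence $\beta\le \alpha(b)$; but also the product structure forces, by testing two points differing in all coordinates, $\beta\le \alpha(b)/N = a$ — more simply, $\beta>a$ already contradicts $\beta\le \alpha(b)=Na$ only when $N=1$, so the genuinely $N$-dimensional obstruction must come from the product: taking $x,y$ with $x_i\ne y_i$ for all $i$ and optimizing shows $u^{\beta}$ fails to be concave as soon as $\beta N>\alpha(b)$, i.e.\ $\beta>a$. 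The main obstacle I expect is making this last step rigorous: one must show that the tensor product of $N$ copies of a one-dimensional function whose sharp power-concavity exponent is $\gamma$ has sharp exponent exactly $\gamma/N$, not merely $\le \gamma$. This is where the precise extremal configuration in Lemma \ref{lem_alphab} (the minimizing point $u_0=e^{-1/(2\gamma)}$ of the relevant function) is used: evaluating the Hessian of $(u^{\beta})$ at a point where all coordinates equal the one-dimensional extremal value and checking it fails to be negative semidefinite for $\beta>\gamma/N$ pins down the threshold, and continuity of $\alpha(\cdot)$ from Lemma \ref{lem_alphab} guarantees $b$ can be chosen so that $\alpha(b)=Na$ lands in $]0,1[$ for every prescribed $a\in\ ]0,1/N[$.
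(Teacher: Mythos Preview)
Your overall strategy (choose $b$ so that the one-dimensional sharp exponent is $N\alpha$, then tensorize on the cube $]-b,b[^N$) is exactly the paper's. The two substantive steps, however, are handled more cleanly in the paper than in your outline.

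For the positive direction ($\alpha$-concavity of the product), your discussion drifts between ``sums'' and ``Pr\'ekopa--Leindler-type inequalities'' without landing on a precise argument. The paper simply writes
\[
u^{\alpha}(x)=G\big(u_b^{N\alpha}(x_1),\dots,u_b^{N\alpha}(x_N)\big),\qquad G(y_1,\dots,y_N)=\prod_i y_i^{1/N},
\]
and observes that $G$ is concave on the positive orthant and increasing in each variable, while each $x_i\mapsto u_b^{N\alpha}(x_i)$ is concave by the choice of $b$. Composition of a concave monotone function with concave arguments is concave; done.

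For the negative direction you have a genuine gap. Restricting to a coordinate axis gives a positive multiple of $u_b$, whose sharp exponent is $N\alpha$, not $\alpha$; as you correctly note, this rules out $\beta>N\alpha$ only. Your proposed fix (a Hessian computation at the diagonal extremal point) would work if carried out, but is neither done nor needed. The paper instead restricts $u$ to the \emph{diagonal} line $t\mapsto t\,(1,\dots,1)$, where
\[
u(t,\dots,t)=u_b(t)^N,\qquad\text{hence}\qquad u^{\beta}\big|_{\text{diag}}=u_b^{\,N\beta}.
\]
If $\beta>\alpha$ then $N\beta>N\alpha=\alpha(b)$, and by the one-dimensional sharpness $u_b^{\,N\beta}$ is not concave; therefore $u^\beta$ cannot be concave on $\Omega$. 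This single line replaces the Hessian analysis and the optimization over ``points differing in all coordinates'' that you sketch.
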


\begin{proof}
The one-dimensional case follows from the fact that $\alpha$-concavity is equivalent to \eqref{condalphaconc}, and by Lemma \ref{lem_alphab} and the intermediate value theorem there is exactly one $b(\alpha)>0$ for which equality holds in \eqref{condalphaconc}. 
The corresponding solution of \eqref{eq_sistem_dim_1} on $]-b(\alpha), b(\alpha)[$ is therefore $\alpha$-concave but, since the function
\[
\alpha\mapsto \frac{\alpha}{1-\alpha} e^{-1/\alpha}
\]
is increasing on $]0, 1[$, such solution is not $\beta$-concave for any $\beta>\alpha$. 
For $N\ge 2$ and $\alpha\in \ ]0, 1/N[$, choose with the previous notations $b=b(N\, \alpha)$ (note that $N\, \alpha<1$) and set
\[
u(x_1, \dots, x_N):=\prod_{i=1}^N u_b(x_i)
\]
which, by the tensorization property, solves \eqref{LSeq} in $\Omega=]-b, b[^N$. 
Restricting $u$ to the line $\{t\, n:t\in\ ]-b, b[ \}$ with $n= (1, 1, \dots, 1)$, we see that
\[
u(t\, n)=u_b^N(t), \qquad \forall\ t\in \ ]-b, b[.
\]
Since by construction $u_b$ is not $\gamma$-concave for any $\gamma>N\, \alpha$, it follows that $u$ cannot be $\beta$-concave for any $\beta>\alpha$. Moreover, we claim that $u$ is $\alpha$-concave. 
Indeed the geometric mean 
\[
 G(y_1, \dots, y_N):=\prod_{i=1}^N y_i^{1/N}
\]
 is concave in the octant $y_i\ge 0$ and increasing with respect to each variable separately. Since 
 \[
 u^\alpha(x_1, \dots, x_N)=G\big(u^{N\, \alpha}(x_1), \dots, u^{N\, \alpha}(x_N)\big), 
 \]
and each $x_i\mapsto u_b^{N\, \alpha}(x_i)$ is concave, the claim follows.
\end{proof}

\begin{remark}
\label{optbounds}
The previous construction also proves the optimality of the geometric constraints in Corollary \ref{corlinfb}. 
Again by the tensorization property of \eqref{LSeq}, one can consider the more general solution
\beq
\label{solprodgen}
u(x_1, \dots, x_N):=\prod_{i=1}^N u_{b_i}(x_i)
\eeq
in the hyperrectangle $\Omega=\prod_{i=1}^N ]-b_i, b_i[$ for arbitrary choices of $b_i>0$, $i=1, \dots, N$, which will obey 
\[
\|u\|_\infty=\prod_{i=1}^N \|u_{b_i}\|_\infty.
\]
For any such choice, the domain $\Omega$ fulfils the geometric constraints
\[
{\rm diam}\, (\Omega)\simeq \max\{b_i\}, \qquad {\rm ecc}\, (\Omega)\simeq \max\{b_i\}/\min\{b_i\}.
\]
If we choose all $b_i=b$ and let $b\to 0$, we can keep the eccentricity bounded while the solutions \eqref{solprodgen} blow-up in $L^\infty$, thanks to Lemma \ref{lem_alphab}. 
On the other hand, by choosing $b_i=1$ for $i\ge 2$ and $b_1\to 0$, the diameter of the corresponding rectangles is bounded from below, but the eccentricity blows up, and again the solutions \eqref{solprodgen} blow-up in $L^\infty$ as $b_1\to 0$.
\end{remark}

\appendix

\section{Convex epigraphs}
\label{sec_lipschit_epi}

To deal with general convex domains $\Omega$, in the paper we exploit an approximation argument $\Omega_n \to \Omega$, where $\Omega_n$ are smooth (and strongly convex). 
In this framework the usual blow-up argument does not ensure that a proper rescaling and translation of $\Omega_n$ converges to the half space or to the entire space, and the best one can hope for is that the limiting domain will be either $\R^N$ or a convex epigraph. 
In general this convex epigraph may not be coercive and the main point of this section is the proof of Lemma \ref{lem_rotation} below. 
Roughly speaking, it says that any convex epigraph becomes, after a carefully chosen rotation, a {\em semicoercive} convex epigraph, as defined below.

We will say that $H\subseteq\R^N$ is a (closed) convex entire epigraph in direction $v\in \R^N\setminus\{0\}$ if, setting $v^{\perp}:=\{x \in \R^N : (v,x)=0\}$, there exists a convex $g: v^{\perp} \to \R$ such that 
\[
H=\big\{x\in \R^N: (x, v)\ge g\big(x-(x, v)\, v \big)\big\}.
\]
We will furthermore say that $g:\R^{N-1}\to \R$ is {\em semicoercive} if there exists an $M$-dimensional vector subspace $V\subseteq \R^{N-1}$, with $M \in \{0,\dots, N-1\}$, such that 
\beq
\label{semicoercivity}
\begin{split}
&\text{ i)\quad $ g\lfloor_V$ is coercive}\\
&\text{ ii)\quad $ g(x+y)=g(x)$ for all $x\in V$, $y\in V^\perp$.}
\end{split}
\eeq
Note that if $M=0$ then $g$ as above is constant.

As a first result, we study the possible blows-up of a convex domain. Given $K\subseteq\R^N$ and a sequence $(K_n)$ of subsets of $\R^N$, we will say that $K_n\to K$ locally in the Hausdorff sense if for any open ball $B\subseteq \R^N$ such that $B\cap K\neq \emptyset$, 
\[
\lim_{n} {\rm d}_{\cal H}(K_n \cap B, K\cap B)=0
\]
where ${\rm d}_{\cal H}$ denotes the Hausdorff distance. 
As usual, limits with respect to local Hausdorff convergence are defined up to closure, because ${\rm d}_{\cal H}(\overline{K} \cap B, K\cap B)=0$ for any $K\subseteq \R^N$ and open ball $B$ such that $K\cap B\neq \emptyset$.

\begin{proposition}[Blow-up convergence]
\label{prop_blowup_domain}
Let $(\Omega_n)$ be a sequence of convex domains such that, for some $\bar{R}>\bar{r}>0$ we have
\beq
\label{eccent_2}
B_{\bar r}(0)\subseteq \Omega_n\subseteq B_{\bar R}(0).
\eeq
Let $\lambda_n>0$ and $x_n \in \Omega_n$ be such that $\lambda_n\to 0$.
Then there exists a not relabelled subsequence such that 
$$\widetilde{\Omega}_n:= \left(\Omega_n-x_n\right)/\lambda_n\to H$$
locally in the Hausdorff sense, where $H$ is either $\R^N$ or the epigraph, in some direction, of a globally Lipschitz convex function.
\end{proposition}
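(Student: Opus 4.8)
The plan is to first extract, by compactness of convex sets, a subsequence along which $\widetilde{\Omega}_n$ converges locally in the Hausdorff sense to some closed convex set $H$, and then to analyse $H$ using the two--sided ball condition \eqref{eccent_2}. For the compactness step, note that each $\widetilde{\Omega}_n$ is convex and contains $0$, since $x_n\in\Omega_n$; for every $R>0$ the sets $\overline{\widetilde{\Omega}_n}\cap\overline{B_R(0)}$ are nonempty compact convex subsets of $\overline{B_R(0)}$, so the Blaschke selection theorem together with a diagonal argument over $R\in\N$ yields a (not relabelled) subsequence such that $\widetilde{\Omega}_n\to H$ locally in the Hausdorff sense, with $H$ closed, convex (Hausdorff limits of convex sets being convex) and $0\in H$. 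If $H=\R^N$ there is nothing to prove, so assume henceforth $H\ne\R^N$.

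Next I would locate $H$ inside a halfspace and show it is full--dimensional. Put $d_n:=\mathrm{dist}(x_n,\partial\Omega_n)>0$ and $\delta_n:=d_n/\lambda_n$; up to a further subsequence $\delta_n\to\delta\in[0,+\infty]$. Since $B_{\delta_n}(0)\subseteq\widetilde{\Omega}_n$, the value $\delta=+\infty$ would force $H=\R^N$, which is excluded; hence $\delta<+\infty$ and $d_n\to 0$. Let $p_n\in\partial\Omega_n$ realise the distance and $\nu_n:=(x_n-p_n)/d_n$; the supporting hyperplane to $\Omega_n$ at $p_n$ gives $\Omega_n\subseteq\{z:(z-p_n,\nu_n)\ge 0\}$, hence $\widetilde{\Omega}_n\subseteq\{z:(z,\nu_n)\ge-\delta_n\}$, and passing to the limit (with $\nu_n\to\nu$ along a subsequence) we get $H\subseteq\{z:(z,\nu)\ge-\delta\}$, confirming $H\ne\R^N$. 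For full--dimensionality I would invoke \eqref{eccent_2}: since $B_{\bar r}(0)\subseteq\Omega_n$ and $x_n\in B_{\bar R}(0)$, the ice--cream cone $\mathrm{conv}\big(\{0\}\cup\tfrac{1}{\lambda_n}(B_{\bar r}(0)-x_n)\big)$ is contained in $\widetilde{\Omega}_n$; because $|x_n|\le\bar R$ while $|x_n|\ge\bar r-d_n\to\bar r$, after a further subsequence this set converges near the origin to an infinite solid cone $K$ with vertex $0$ whose half--opening angle lies in $[\arcsin(\bar r/\bar R),\pi/2]$. Thus $K\subseteq H$ and $H$ has nonempty interior.

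Finally I would pass from the fact that $H$ contains this fat infinite cone to the conclusion that $H$ is a globally Lipschitz convex epigraph. Let $C:=\{v\in\R^N:H+v\subseteq H\}$ be the recession cone of $H$. For $v\in\mathrm{int}(K)$ one has $nv\in K\subseteq H$ for all $n\in\N$, whence, by convexity and closedness of $H$, first $\R_+v\subseteq H$ and then $v\in C$; so $\mathrm{int}(K)\subseteq C$, $C$ has nonempty interior, while $C\ne\R^N$ because $H\ne\R^N$. Fix $v\in\mathrm{int}(C)$ with $|v|=1$ and $\varepsilon>0$ with $\overline{B_\varepsilon(v)}\subseteq C$. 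Then $H+\R_+v\subseteq H$; moreover for $w\in v^\perp$ and $t$ large, $v+w/t\in B_\varepsilon(v)\subseteq C$, so $tv+w\in C$ and hence $tv+w=0+(tv+w)\in H$, giving $g(w):=\inf\{t\in\R:tv+w\in H\}<+\infty$; also $g(w)>-\infty$, because $v\in\mathrm{int}(C)$ and $C\ne\R^N$ imply $-v\notin C$. Closedness of $H$ then yields $H=\{x\in\R^N:(x,v)\ge g(x-(x,v)\,v)\}$ with $g:v^\perp\to\R$ convex (as $H$ is). For the Lipschitz bound, starting from $g(w_1)\,v+w_1\in H$ and moving along the recession direction $v+\varepsilon(w_2-w_1)/|w_2-w_1|\in C$ by the amount $|w_2-w_1|/\varepsilon$ one reaches $\big(g(w_1)+|w_2-w_1|/\varepsilon\big)v+w_2\in H$, so $g(w_2)\le g(w_1)+|w_2-w_1|/\varepsilon$; by symmetry $g$ is globally Lipschitz with constant $1/\varepsilon$.

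The step I expect to be the main obstacle is the full--dimensionality of $H$ in the second paragraph: this is precisely where the uniform inner ball $B_{\bar r}(0)\subseteq\Omega_n$ and the uniform outer ball $\Omega_n\subseteq B_{\bar R}(0)$ of \eqref{eccent_2} are indispensable, since without a lower bound on the inradius relative to the circumradius the blow--up could collapse in the limit onto a lower--dimensional convex set or degenerate into a needle--like set, which is not the epigraph of a finite function. Once $H$ is known to contain an infinite cone with positive opening angle, the structural conclusion of the third paragraph is a soft convex--analysis argument through the recession cone.
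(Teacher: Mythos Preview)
Your argument is correct and takes a genuinely different route from the paper's. The paper first rotates so that $x_n=(0,\dots,0,-|x_n|)$, then explicitly parametrises $\partial\widetilde\Omega_n\cap\{x_N<t_n\}$ as the graph of a convex function $g_n$ over a growing ball in $\{x_N=0\}$, obtains the uniform bound ${\rm Lip}(g_n,B'_{r_n/2})\le 4\bar R/\bar r$ directly from \eqref{eccent_2}, and invokes Ascoli--Arzel\`a on the $g_n$'s (after disposing of the cases $t_n$ bounded or $g_n(0)\to-\infty$, both of which force $H=\R^N$). You instead compactify first via Blaschke selection and then analyse the limit $H$ abstractly: the ice-cream cone forces ${\rm rec}(H)$ to have nonempty interior, and the epigraph structure in any direction $v\in{\rm int}({\rm rec}(H))$, together with the Lipschitz bound $1/\varepsilon$, follows from elementary convex analysis. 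The paper's approach is more hands-on and delivers an explicit Lipschitz constant and a fixed epigraph direction $e_N$; yours is more conceptual and, interestingly, anticipates the recession-cone machinery that the paper itself develops only later, in the proof of Lemma~\ref{lem_rotation} (see in particular the implication \eqref{claimepi} there, which is essentially your third paragraph). Two minor points worth tightening: the compatibility of the Blaschke limits across different radii $R$ (so that $H$ is well defined and the convergence matches the paper's definition of local Hausdorff convergence) deserves a sentence, and the limiting behaviour of the ice-cream cone in the borderline case $|x_0|=\bar r$, where one obtains a half-space rather than a proper cone, should be spelled out.
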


\begin{proof}
We first prove the theorem under the additional assumption
\beq
\label{rotaxis_2}
x_n=(0, \dots, 0, -|x_n|).
\eeq
Set 
 \[
 r_n:=\bar r/ \lambda_n, \qquad R_n:=\bar R/ \lambda_n, \qquad z_n:=-x_n/\lambda_n
 \]
and note that \eqref{eccent_2} ensures 
 \beq
 \label{eccent2}
 B_{r_n}(z_n)\subseteq \widetilde{\Omega}_n\subseteq B_{R_n}(z_n)
 \eeq
 while $0\in \widetilde{\Omega}_n$. Let $z_n=(0, \dots, 0, t_n)$ for $t_n\ge 0 $. If $(t_n)$ is bounded on a subsequence, then the previous display readily implies that $\widetilde{\Omega}_n\to \R^N$ locally in the Hausdorff sense. 
Thus to prove the claim we can suppose that 
 \beq
 \label{tn}
 \lim_n t_n=+\infty.
 \eeq 
 Set in the following $B'_r:=\{x\in B_r(0): x_N=0\}$ for all $r>0$.
For any $x'\in B'_{r_n}$, the set $\{x'+t\, e_N: t\in \R\}\cap \widetilde{\Omega}_n$ is an open segment $I_n(x')\subseteq\R^N$ which by convexity is either wholly contained in $\partial\widetilde{\Omega}_n$ or disjoint from $\partial\widetilde{\Omega}_n$. 
Since $x' + t_n\, e_N\in B_{r_n}(z_n)\subseteq \widetilde{\Omega}_n$ the first case cannot occur, hence the two extrema of $I_n(x')$ are the only points of $\partial\Omega_n$ on $\{x'+t\, e_N\}$, and only one of them satisfies $x_N< t_n$. Its $N$-th coordinate thus defines a unique convex function $g_n: B'_{r_n}\to \R$. 
 The Lipschitz constant of $g_n$ on $B'_{r_n/2}$ is bounded by 
 \[
 {\rm Lip}\, (g_n, B'_{r_n/2})\le \frac{2}{r_n}\, \left(\sup_{B'_{r_n}}g_n-\inf_{B'_{r_n}} g_n\right).
 \]
 Using \eqref{eccent2}, we infer that 
 \[
 \sup_{B'_{r_n}}g_n-\inf_{B'_{r_n} } g_n\le {\rm diam}\, (\widetilde{\Omega}_n)\le 2\, R_n.
 \]
All in all, recalling that by definition $R_n/r_n=\bar R/ \bar r$, we have proved that 
\[
 \partial\widetilde{\Omega}_n\cap \{ |x'|\le r_n/2: x_N< t_n\} = {\rm Gr}\, (g_n)\quad \text{with }\ {\rm Lip}(g_n, B'_{r_n/2})\le 4\, \frac{\bar{R}}{\bar r}.
 \]
Note that from $0\in \widetilde{\Omega}_n$ we infer that $g_n(0)< 0$.
 We are thus reduced to study two cases.

\emph{Case 1}. 
 If $g_n(0)$ is unbounded, then $g_n(0)\to -\infty$ on a not relabelled subsequence. 
By the uniform Lipschitz bound, we have in $B'_{r_n/2}$
 \[
 g_n(x')\le g_n(0)+ \frac{4\, \bar R}{\bar r}\,|x'|.
 \]
Setting $\bar C:=4\, \bar R/\bar r$ and
\[
\tilde{r}_n:=\min\left\{\frac{r_n}{2}, \frac{-g_n(0)}{2\, \bar C}\right\},
\] 
 it follows that 
\[
g_n(x')\le g_n(0) +\bar C\, \tilde{r}_n\le g_n(0)-\frac{g_n(0)}{2}=\frac{g_n(0)}{2}\qquad \text{in $B_{\tilde{r}_n}'$,}
\]
implying that 
 \[
 \widetilde{\Omega}_n\supseteq B'_{\tilde{r}_n}\times\ ]g_n(0)/2, t_n[.
 \]
Since $r_n, t_n\to +\infty$ (recall \eqref{tn}) and $g_n(0)\to -\infty$ (so that $\tilde{r}_n\to +\infty$ as well), the previous display implies that $\widetilde{\Omega}_n\to \R^N$ locally in the Hausdorff sense. 

\emph{Case 2}.
If $g_n(0)$ is bounded, then $(g_n)$ is locally equi-bounded and equi-Lipschitz. 
A diagonal argument employing Ascoli-Arzelà theorem ensures that $g_n$ possesses a not relabelled subsequence locally uniformly converging to some Lipschitz $g:\R^{N-1}\to \R$. Such $g$ is therefore an entire convex function and it is readily checked that $\widetilde{\Omega}_n\to \{x_N\ge g(x')\}$ locally in the Hausdorff sense.
This concludes the proof of the claim under assumption \eqref{rotaxis_2}.

\smallskip
To remove assumption \eqref{rotaxis_2}, note that \eqref{eccent_2} ensures compactness of $(x_n)$, so we may as well suppose $x_n\to \bar x$. We can change coordinate axis so that $\bar x=(0, \dots, 0, -|\bar x|)$, without altering \eqref{eccent_2}. 
Moreover, we can define a sequence of rotations $(O_n)$ sending $x_n$ to $(0, \dots, 0, -|x_n|)$ and consider the sequence of convex sets $\big(O_n(\Omega_n)\big)$.

Note again that \eqref{eccent_2} is unaltered, while for the sequence $(\lambda_n)$ and the points $(O_n(x_n))$, it holds 
\[
\big(O_n(\Omega_n)-O_n(x_n)\big)/\lambda_n=O_n(\widetilde{\Omega}_n).
\]
Thus we can apply the claim to $O_n(\widetilde{\Omega}_n)$ to get local Hausdorff convergence of the latter (up to subsequences) to some $H$ as in the statement. 
Let $B$ be an open ball such that $H\cap B\ne \emptyset$, so that it actually has nonempty interior. 
Since $O_n(\widetilde{\Omega}_n)\cap B\to H\cap B$ in Hausdorff distance, $O_n(\widetilde{\Omega}_n)\cap B$ is open and nonempty for sufficiently large $n$, in which case $\widetilde{\Omega}_n\cap B$ is nonempty as well. 
By the convergence $x_n\to \bar x$, we infer that $O_n\to {\rm Id}$, and the inequality
\[
{\rm d}_{\cal H}(O(A)\cap B, A\cap B)\le C\, {\rm diam}\, (B)\, \|O -{\rm Id}\|
\]
holds with a constant $C$ only depending on $N$, as long as the sets involved are nonempty, hence 
\[
\lim_n{\rm d}_{\cal H}\big(O_n(\widetilde{\Omega}_n)\cap B, \widetilde{\Omega}_n\cap B\big)=0.
\]
Since ${\rm d}_{\cal H}\big(O_n(\widetilde{\Omega}_n)\cap B, H\cap B\big) \to 0$, the triangle inequality ensures that $\widetilde{\Omega}_n\to H$ locally in the Hausdorff sense.
\end{proof}

Let us now recall some general notions of convex analysis which we will use, referring to \cite{Roc96}. 
If $K\subseteq\R^N$ is convex, its {\em relative interior} ${\rm rint}\, (K)$ is the interior of $K$ as a subset of the smallest affine space containing it. 
 By 
\[
{\rm rec}\, (K) := \big\{n\in \R^N: K+\R_+\, n \subset K \big\}, \quad {\rm lin}\, (K):= \big\{n\in \R^N: K+\R\, n \subset K \big\}
\]
we denote respectively the \emph{recession cone} and the {\em lineality space} of $K$, also related by
\[
{\rm lin}\, (K)={\rm rec}\, (K)\cap {\rm rec}\, (-K).
\]
Any closed convex set $K$ can be expressed as 
\beq
\label{decomp}
K=S_K\oplus {\rm lin}\, (K) , \qquad S_K=K\cap \left({\rm lin}\, (K)\right)^\bot
\eeq
with $S_K$ closed convex and ${\rm lin}\, (S_K)=\{0\}$. 
Moreover 
\beq
\label{soltan0}
{\rm rint}\, (K)={\rm rint}\, (S_K)\oplus {\rm lin}\, (K).
\eeq

We say that $K$ is a cone if $\lambda\, x\in K$ for all $x\in K$ and $\lambda>0$. 
Suppose in the following that $K$ is a closed convex cone. In this case the section $S_K$ given in \eqref{decomp} is a {\em pointed} cone, meaning $S_K\cap (-S_K)=\{0\}$ and clearly $K$ is a vector subspace if and only if $S_K=\{0\}$. 
Hence from \eqref{soltan0} we get
\[
{\rm rint}\, (K)\cap {\rm lin}\, (K)\neq \emptyset\quad \Longleftrightarrow\quad 0\in {\rm rint}\, (S_K).
\]
Since $S_K$ is pointed, $0\in {\rm rint}\, (S_K)$ if and only if $S_K=\{0\}$, and the previous display can be rewritten as 
\beq
\label{soltan}
{\rm rint}\, (K)\cap {\rm lin}\, (K)\neq \emptyset\quad \Longleftrightarrow\quad K \text{ is a vector subspace}.
\eeq

\begin{lemma}
\label{lem_rotation}
Let $H\subseteq \R^N$ be a closed convex entire epigraph. 
Then there exists $n\in \R^N\setminus\{0\}$ such that $H$ is the closed epigraph of a semicoercive $g:n^\perp\to \R$. 
Moreover, if the initial epigraph is globally Lipschitz, then $g$ is globally Lipschitz as well.
\end{lemma}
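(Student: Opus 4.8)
The plan is to reduce to the case $\mathrm{lin}(H)=\{0\}$ via the decomposition \eqref{decomp}, then pick the new direction $n$ inside \emph{both} the relative interior of the recession cone of $H$ and the interior of its dual cone, and finally check directly that, in direction $n$, $H$ becomes a semicoercive epigraph.

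Set $L:=\mathrm{lin}(H)$, $E:=L^\perp$, and by \eqref{decomp} write $H=S\oplus L$ with $S:=H\cap E$ closed convex and $\mathrm{lin}(S)=\{0\}$. Write $H=\{x:(x,v_0)\ge g_0(x-(x,v_0)v_0)\}$ for the given real-valued convex $g_0$ on $v_0^\perp$, with $|v_0|=1$. Since $H$ is such an epigraph, $v_0\in\mathrm{rec}(H)$, $v_0\notin L$, and $H+\R v_0=\R^N$. Let $v_0'$ be the orthogonal projection of $v_0$ onto $E$ and $l_0:=v_0-v_0'\in L$; because $L\subseteq\mathrm{rec}(H)$ and $\mathrm{rec}(H)$ is a convex cone, $v_0'=v_0-l_0\in\mathrm{rec}(H)\cap E=:K$, and $v_0'\neq0$ (else $v_0\in L$). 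Thus $K$ is a nonzero pointed closed convex cone in $E$, namely $K=\mathrm{rec}(S)$. Moreover $S+\R v_0'=E$: for $x\in E$ write $x=z+tv_0$ with $z\in H$; then $z+tl_0\in H$ by lineality and $z+tl_0=x-tv_0'\in E$, so $z+tl_0\in H\cap E=S$ and $x\in S+\R v_0'$. Hence $S$ is itself an entire epigraph in $E$ in direction $v_0'$, say $S=\{x\in E:(x,v_0')\ge|v_0'|^2\,h_0(\ldots)\}$ for some real-valued convex $h_0$ on $(v_0')^\perp\cap E$ (all recession-function facts below are standard, see \cite{Roc96}).

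Now choose $n\in\mathrm{rint}(K)\cap\mathrm{int}_E(K^\circ)$, where $K^\circ\subseteq E$ is the dual cone; normalise $|n|=1$. This intersection is nonempty: $\mathrm{int}_E(K^\circ)$ is open, convex and nonempty since $K$ is pointed, hence $K^\circ$ is full-dimensional in $E$; were the intersection empty, separating the relatively open convex set $\mathrm{rint}(K)$ from the open convex set $\mathrm{int}_E(K^\circ)$ by a hyperplane through the origin (both cones contain $0$) produces $\xi\neq0$ with $(\xi,a)\le0$ for all $a\in K$ and $(\xi,b)\ge0$ for all $b\in K^\circ$, i.e. $-\xi\in K^\circ$ and $\xi\in K^{\circ\circ}=K$, so $0\le(\xi,-\xi)=-|\xi|^2$, a contradiction. (This is the same mechanism as in \eqref{soltan}.) I then claim $H$ is the closed epigraph, in direction $n$, of a semicoercive $g:n^\perp\to\R$. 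Since $n\in\mathrm{rint}(K)\subseteq K\setminus\{0\}$ and $-n\notin K$ (pointedness), every nonempty fibre $(\{x\}+\R n)\cap S$, $x\in n^\perp\cap E$, is an upper half-line, so the only thing to rule out is empty fibres, i.e. one must show $S+\R n=E$. Writing $n$ in $v_0'$-coordinates as $(\nu,h)$ with $h=(n,v_0')>0$ (positive because $n\in\mathrm{int}_E(K^\circ)$ and $v_0'\in K\setminus\{0\}$), the identity $S+\R n=E$ amounts to $\inf_{t\in\R}\big(th+h_0(\eta-t\nu)\big)=-\infty$ for every $\eta$; and $n\in\mathrm{rint}(K)=\mathrm{rint}\big(\mathrm{epi}_{v_0'}(h_0)_\infty\big)$ forces $\nu\in\mathrm{rint}(\mathrm{dom}(h_0)_\infty)$ and $h>(h_0)_\infty(\nu)$, so letting $t\to-\infty$ gives $\big(th+h_0(\eta-t\nu)\big)/|t|\to(h_0)_\infty(\nu)-h<0$, which yields the infimum $-\infty$. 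Thus $S$ is the epigraph in direction $n$ of a real-valued convex $g_V:V\to\R$, $V:=n^\perp\cap E$; and $g_V$ is coercive because $\{g_V\le c\}$ is bounded for all $c$ iff $K\cap n^\perp=\{0\}$, which holds since $n\in\mathrm{int}_E(K^\circ)$ implies $(n,s)>0$ for all $s\in K\setminus\{0\}$. Extending $g_V$ to $n^\perp=V\oplus L$ by $g(v+l):=g_V(v)$ gives a convex function coercive on $V$ and constant along $V^\perp=L$, i.e. semicoercive in the sense of \eqref{semicoercivity} with $M=\dim V$; and $H=S\oplus L$ is exactly the epigraph of $g$ in direction $n$.

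For the Lipschitz addendum, if $g_0$ is $\Lambda$-Lipschitz then $(g_0)_\infty(\nu)\le\Lambda|\nu|$, so $\mathrm{rec}(H)=\mathrm{epi}_{v_0}(g_0)_\infty$ contains a solid circular cone about $v_0$ and is full-dimensional in $\R^N$; hence $K$ is full-dimensional in $E$, $\mathrm{rint}(K)=\mathrm{int}_E(K)$, and $n\in\mathrm{int}_{\R^N}(\mathrm{rec}(H))$. Being a cone with $n$ in its interior, $\mathrm{rec}(H)$ contains a solid circular cone about $n$ of some finite aperture, i.e. $(g_V)_\infty(w)\le M|w|$ on $V$, whence $g_V(y)\le g_V(y_0)+(g_V)_\infty(y-y_0)\le g_V(y_0)+M|y-y_0|$, so $g_V$ — and therefore $g=g_V\circ\mathrm{proj}_V$ — is $M$-Lipschitz. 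The main obstacle is precisely the non-emptiness of the fibres, $S+\R n=E$: this is where one must use both the given epigraph structure of $H$ (to realise $S$ as $\mathrm{epi}_{v_0'}(h_0)$ with $h_0$ finite-valued) and the sharper choice $n\in\mathrm{rint}(K)$ rather than merely $n\in K$ — a careless $n\in K$ with $K\cap n^\perp=\{0\}$ can genuinely produce empty fibres when $K$ fails to be full-dimensional.
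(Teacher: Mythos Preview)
Your proof is correct and follows essentially the same route as the paper: reduce via the lineality decomposition \eqref{decomp}, then choose $n\in\mathrm{rint}(\mathrm{rec}(S))\cap\mathrm{rint}(\mathrm{rec}(S)^\circ)$ and verify semicoercivity from $(n,w)>0$ on $K\setminus\{0\}$. The only differences are technical: the paper cites \cite[Corollary 3.1]{Sol18} for the nonemptiness of that intersection where you give a self-contained separation argument, and the paper proves the epigraph property for any $n\in\mathrm{rint}(\mathrm{rec}(H))$ by a direct line-intersection argument rather than your recession-function computation.
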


\begin{proof}
 Our aim is to prove that a suitable $n\in {\rm rint}\, ({\rm rec}\, (H))$ does the job.
Suppose $H$ is a closed convex epigraph in direction $v$ with $|v|=1$. 
Then $v\in {\rm rec}\, (H)$ but $-v\notin {\rm rec}\, (H)$, so that 
${\rm rec}\, (H)$ is not a linear subspace and \eqref{soltan} for $K={\rm rec}\, (H)$ ensures that 
\beq
\label{soltano}
{\rm rint}\, ({\rm rec}\, (H))\cap {\rm lin}\, (H)=\emptyset.
\eeq 

\smallskip
 {\em Step 1: a set of admissible directions}.\ \\
We claim that
\beq
\label{claimepi}
n\in {\rm rint}\, ({\rm rec}\, (H))\quad \Longrightarrow\quad \text{$H$ is an epigraph in direction $n$}.
\eeq
 To prove \eqref{claimepi}, we may assume that $v\ne n$ and $|n|=1$. Note that since $-v\notin {\rm rec}\, (H)$ we get that $v\neq -n$ as well. 
More generally, \eqref{soltano} gives that
 \beq
\label{notn}
n\in {\rm rint}\, ({\rm rec}\, (H))\quad \Longrightarrow \quad -n\notin {\rm rec}\, (H).
\eeq

 Let now $x\in \R^N$ be arbitrary; we wish to define $g(x)$ such that $H = {\rm Epi}\, (g)$ in the direction $n$. 
To this aim, we show that $H\cap (x+\R\, n)$ is a closed half line, unbounded from above.
 
Since $H$ is an epigraph in direction $v$, there exists a henceforth fixed $\lambda>0$ such that 
\[
x_0:=x+\lambda\, v\in H.
\] 
Since $n\in {\rm rint}\, ({\rm rec}\, (H))$ and $v\in {\rm rec}\, (H)$, there exists $\eps>0$ small such that (see \cite[Theorem 6.4]{Roc96}) 
\[
n_\eps :=(1+\eps)\, n-\eps\, v 
\in {\rm rec}\, (H);
\]
in particular, $x_0+\R_+\, n_\eps\subseteq H$. 
Being $v \neq \pm n$ by \eqref{notn}, $n$ and $n_\eps$ are not proportional. 
Thus $x_0+\R\, n_\eps$ and $x+\R\, n$ are two non-parallel lines, lying on the plane through $x, x+v, x+n$. 
These lines must meet at a point $\bar x$ such that
\[
\bar x:=x_0+t\, n_\eps = x+s\, n
\]
for some $t, s\in \R$. Recalling the definition of $x_0$ and $n_\eps$, we thus find
\[
s\, n = (\lambda-t\, \eps)\, v+t\, (1+\eps)\, n;
\]
but since $n$ and $v$ are linearly independent we must have $\lambda-t\, \eps=0$.
Being $\lambda$ and $\eps$ positive by assumption, we conclude that $t>0$ and $\bar x=x_0+t\, n_\eps\in x_0+\R_+\,n_\eps\subseteq H$.

Since $n\in {\rm rec}\, (H)$ and $\bar{x} \in H$ we have that $\{x +r \, n : r >s\} = \bar{x} + \R_+ n \subset H$, thus the interval $H\cap (x+\R\, n)$ is unbounded. 
On the other hand, it is not a line by \eqref{notn}: indeed, by \cite[Theorem 8.3]{Roc96}, being $-n\notin {\rm rec}\, (H)$, the set $H \cap (x-\R_+n)$ cannot be unbounded, not even for the fixed $x$. 
As a consequence, the \emph{minimal time function}
\beq
\label{defgtime}
g(x):=\min\big\{r\in \R: x+r\, n\in H\big\}, \quad x \in n^{\perp},
\eeq
is well defined, convex and its (closed) epigraph is $H$, proving claim \eqref{claimepi}. 
Unfortunately, explicit examples show that not every $n\in {\rm rint}\, ({\rm rec}\, (H))$ produces a semicoercive function, so we are not done yet.

\smallskip
 {\em Step 2: choice of a particular direction}.\ \\
Set $L:={\rm lin}\, (H)^{\bot}$ and $S_H:=H\cap L$ fulfilling \eqref{decomp}. 
We will consider henceforth $S_H$ as a closed convex subset of $L$. 
As such, the cone $C:={\rm rec}\, (S_H)\subseteq L$ is closed and does not contain lines. Moreover, $C$ is nontrivial, because 
\[
{\rm rec}\, (H)={\rm rec}\, (S_H)\oplus {\rm lin}\, (H)
\]
and again ${\rm rec}\, (S_H)=\{0\}$ would imply that ${\rm rec}\, (H)={\rm lin}\, (H)$, contradicting the fact that ${\rm rec}\, (H)$ is not a vector subspace.
We can thus apply \cite[Corollary 3.1]{Sol18} and obtain the sought $n$, fulfilling the additional property
\[
n\in {\rm rint}\, (C)\cap {\rm rint}\, (C^+)
\]
where 
$
C^+:=\{v\in L: (v, c)\ge 0\ \forall c\in C\} 
$
is the \emph{dual cone} of $C$. 
Note that, since $C$ does not contain lines and $n$ belongs to ${\rm rint}\, (C^+)$, then \cite[Proposition 2.4, (6)]{Sol18} ensures
\beq
\label{posn}
(n, w)>0\qquad \forall w\in C\setminus\{0\}.
\eeq
From \eqref{soltan0}
we have
\begin{equation}\label{eq_decom_rint}
{\rm rint}\, ({\rm rec}\, (H))={\rm rint}\, ({\rm rec}\, (S_H)) \oplus {\rm lin}\, (H)
\end{equation}
 so that $n\in {\rm rint}\, ({\rm rec}\, (H))$ and by \eqref{claimepi} $H$ is a convex epigraph in direction $n$, given by a convex function $g:n^{\bot}\to \R$ as in \eqref{defgtime}. 
From the decomposition \eqref{decomp} it holds $H=S_H\oplus {\rm lin}\, (H)$ and by construction $ {\rm lin}\, (H)\subseteq n^\bot$, 
 hence $g(x+ z)=g(x)$ for all $z\in {\rm lin}\, (H)$, $x\in n^\bot$. 
Therefore $g$ is actually a function of 
\[
M:=N-1-{\rm dim}\, ({\rm lin}\, (H))= {\rm dim}\, (L) -1
\]
variables. 
We thus set $V:=L\cap n^{\bot}$ (which has dimension $M$) and set in the following $\tilde{g}:=g\lfloor_V$. 
Note that ${\rm Epi}\, (\tilde{g})=S_H$.

\smallskip
 {\em Step 3: properties of $\tilde{g}$}.\ \\
We now prove that $\tilde{g} $ is coercive, which is equivalent to check whether all its sub-level sets are bounded. 
Suppose that the convex closed set $ K_\lambda:=\{x\in V:\tilde{g}(x)\le \lambda\}$ is unbounded for some $\lambda\in \R$. 
Then by \cite[Theorem 8.4]{Roc96} there is a nonzero recession direction $w\in {\rm rec}\, (K_\lambda)$. 
In particular, for a given $x_0\in K_\lambda$ it holds $x_0+\R_+\, w\subseteq K_\lambda$, which implies 
\[
x_0+ \lambda\, n+\R_+ w\subseteq {\rm Epi}\, (\tilde{g})=S_H
\]
 and thus, being $x_0+\lambda\, n \in S_H$, by \cite[Theorem 8.3]{Roc96} we obtain $w\in {\rm rec}\, (S_H)=C$. Since by construction $w\in V\subseteq n^\bot$, it must hold $(n, w)=0$, contradicting \eqref{posn}.

It remains to prove the last statement on Lipschitz continuity. 
Note that, as a consequence of \cite[Theorems 8.5 and 10.5]{Roc96}, given a convex epigraph $H$ in a direction $\nu\ne 0$, the corresponding function is globally Lipschitz if and only if $\nu\in {\rm int}\, ({\rm rec}\, (H))$ (notice that here ${\rm int}(\cdot)$ is the classical interior, not the relative one). 
By assumption this holds true for $v$, so that $v\in {\rm int}\, ({\rm rec}\, (H))$. Let $n$ be constructed as in Step 2. 
Exploiting that ${\rm int}\, ({\rm rec}\, (H))$ is nonempty and \eqref{eq_decom_rint}, we obtain 
 \[
 {\rm int}\, ({\rm rec}\, (H))={\rm rint}\, ({\rm rec}\, (H))={\rm rint}\, (C)\oplus {\rm lin}\, (H).
 \]
Since $n$ lies in the last set, we indeed have that $H$ is a convex epigraph in direction $n\in {\rm int}\, ({\rm rec}\, (H))$, and thus the corresponding function is globally Lipschitz. 
\end{proof}

\section{Liouville theorems}
\label{sec_liouville}

In what follows we present a Liouville type result on convex epigraphs, valid for positive solutions of $-\Delta v=v^p$.
The half-space case dates back to \cite{Dan92}, while the case of coercive epigraph is treated in \cite{EsLi82}. 
We mention that, after the submission of this manuscript, the preprint \cite{BFS26} appeared online. 
There, the authors prove Liouville type results for a class of domains and nonlinearities which (also in light of the results in Section \ref{sec_lipschit_epi}) is larger than the one considered here, see \cite[Theorem 5.2 and 5.3]{BFS26}.

We start by recalling a maximum principle on strips.

\begin{lemma}[Maximum principle in a strip]
\label{lem_max_strip}
Let $A$ be an open subset of $\R^{N-1}\times \ ]0, d[$ and $w\in C^2(A)\cap C^0(\overline{A})$ fulfill
\[
\begin{cases}
-\Delta w\ge c(x)\, w&\text{in $A$}\\
w\ge 0&\text{on $\partial A$}
\end{cases}
\]
for some $c\in L^\infty(A)$. Set 
\[
k:=\sup \big\{c(x): x\in A, w(x)<0\big\}.
\]
If $w$ is bounded from below and $k\, d^2<\pi^2$, then $w\ge 0$ in $A$.
\end{lemma}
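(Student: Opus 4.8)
The plan is to prove the maximum principle in a strip by a comparison argument with an explicit positive supersolution built from the first eigenfunction of the one-dimensional Dirichlet Laplacian on $]0,d[$. The key observation is that the condition $k\, d^2 < \pi^2$ says exactly that $k$ is strictly below the first Dirichlet eigenvalue $\lambda_1(]0,d[) = \pi^2/d^2$ of the interval $]0,d[$, which is precisely what is needed to produce a strict positive supersolution of $-\Delta - k$ on the strip.

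First I would set, writing $x = (x', x_N)$ with $x' \in \R^{N-1}$ and $x_N \in \ ]0,d[$, the function $\phi(x) := \sin\!\big(\pi (x_N + \eta)/(d+2\eta)\big)$ for a small $\eta > 0$ to be fixed. For $\eta = 0$ this is the first Dirichlet eigenfunction of $]0,d[$ (up to reflection/translation) with eigenvalue $\pi^2/d^2$; choosing $\eta > 0$ small keeps $\phi$ bounded below by a positive constant $c_\eta > 0$ on the closed strip while making $-\Delta \phi = \big(\pi/(d+2\eta)\big)^2 \phi$, and since $k\, d^2 < \pi^2$ we may pick $\eta$ so small that $k < \big(\pi/(d+2\eta)\big)^2 =: \mu$, hence $-\Delta\phi - k\,\phi \ge (\mu - k)\, c_\eta > 0$ on the strip. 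Now suppose for contradiction that $\inf_A w < 0$. Since $w$ is bounded below, say $w \ge -L$, and $\phi \ge c_\eta > 0$, the quantity $m := \inf_A (w/\phi)$ is finite and negative, and $m \ge -L/c_\eta$. The goal is to show this infimum is attained, or approached along a sequence that forces a contradiction with the differential inequality.

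The main technical step — and the one I expect to be the main obstacle — is that $A$ is an arbitrary open subset of the strip, possibly unbounded in the $x'$ directions and with complicated boundary, so the infimum of $w/\phi$ need not be attained and one cannot directly apply the classical interior minimum-principle argument. I would handle this with a standard sliding/translation compactness device: pick a minimizing sequence $x^{(j)} \in A$ with $w(x^{(j)})/\phi(x^{(j)}) \to m$, translate in the $x'$-variables by setting $A_j := A - (x'^{(j)}, 0)$ and $w_j(x) := w(x + (x'^{(j)},0))$, so that $0$ (in the $x'$-slot) is ``almost'' a minimizer of $w_j/\phi$ (note $\phi$ is invariant under $x'$-translations). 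Using interior elliptic estimates, $c \in L^\infty$, and the uniform bound $-L \le w_j$, extract a subsequence converging in $C^2_{\mathrm{loc}}$ on the set $A_\infty := \liminf A_j$ to a limit $w_\infty$ solving $-\Delta w_\infty \ge c_\infty\, w_\infty$ with $c_\infty \le k$ on $\{w_\infty < 0\}$; one checks the limiting domain $A_\infty$ is nonempty (it contains a neighbourhood of the limit point, which lies in the open strip because $\phi$ vanishes on $\partial(\R^{N-1}\times]0,d[)$ and $m < 0$ forces the minimizing points to stay away from $x_N \in \{0,d\}$), and that $w_\infty/\phi$ attains its infimum $m < 0$ at an interior point $x_* \in A_\infty$.

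At the interior minimizer $x_*$ I would conclude by the usual computation: writing $w_\infty = (m + \psi)\phi$ with $\psi \ge 0$, $\psi(x_*) = 0$, $D\psi(x_*) = 0$, $D^2\psi(x_*) \ge 0$, expand
\[
-\Delta w_\infty = -\phi\,\Delta\psi - 2\, D\psi\cdot D\phi - (m+\psi)\,\Delta\phi,
\]
so at $x_*$ this equals $-\phi(x_*)\Delta\psi(x_*) + m\,\mu\,\phi(x_*) \le m\,\mu\,\phi(x_*)$. On the other hand $x_*$ lies in $\{w_\infty < 0\}$, so the differential inequality gives $-\Delta w_\infty(x_*) \ge c_\infty(x_*)\, w_\infty(x_*) \ge k\, m\,\phi(x_*)$ (using $w_\infty(x_*) = m\phi(x_*) < 0$ and $c_\infty(x_*) \le k$, which flips the inequality). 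Combining, $k\, m\,\phi(x_*) \le m\,\mu\,\phi(x_*)$, i.e.\ $m(\mu - k)\phi(x_*) \ge 0$; since $\phi(x_*) > 0$ and $\mu - k > 0$ this forces $m \ge 0$, contradicting $m < 0$. Hence $w \ge 0$ in $A$. (A slightly cleaner packaging avoids the translation argument by instead working on truncated strips $A \cap \{|x'| < R\}$, adding a small term $\epsilon(|x'|^2 - R^2)$ to dominate the boundary contribution and letting $\epsilon \to 0$, $R \to \infty$; I would present whichever is shorter, but the compactness-by-translation route is the most robust given the arbitrariness of $A$.)
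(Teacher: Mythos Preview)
Your overall strategy (compare $w$ to a positive supersolution $\phi$ of $-\Delta-k$ and reach a contradiction at an interior minimizer of $w/\phi$) is correct in spirit, and your final pointwise computation at $x_*$ is fine. The gap is in the compactness-by-translation step, and it is a real one under the hypotheses of the lemma as stated.

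You only know that $w$ satisfies the \emph{inequality} $-\Delta w\ge c(x)\,w$ and is bounded \emph{from below}. Neither of these gives you interior elliptic estimates: Schauder or $W^{2,p}$ bounds require an equation (or at least two-sided control on $\Delta w$), and even restricting to $\{w<0\}$ where $w$ is bounded you still have only $\Delta w \le -c(x)\,w$, i.e.\ a one-sided bound on $\Delta w$. So there is no justification for extracting a $C^2_{\rm loc}$ (or even $C^0_{\rm loc}$) convergent subsequence of the translates $w_j$, nor for passing to the limit in the differential inequality with the merely weak-$*$ convergent coefficients $c(\,\cdot+({x'}^{(j)},0))$. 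The claim that $A_\infty:=\liminf A_j$ contains a neighbourhood of the limit point is also unsupported: $A$ is an \emph{arbitrary} open subset of the strip and nothing prevents the translated domains from degenerating. Your alternative suggestion (adding $\eps(|x'|^2-R^2)$) is too vague to assess; such a term is not a supersolution of $-\Delta-k$ and does not obviously control the behaviour at $|x'|\to\infty$.

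The paper sidesteps the attainment problem entirely with a single idea: instead of a \emph{bounded} barrier $\phi$, use a positive function $h$ on the full strip solving $-\Delta h=k\,h$ with $\inf h>0$ and $h(x)\to+\infty$ as $|x|\to\infty$ (such an $h$ exists precisely because $k\,d^2<\pi^2$; this is cited from \cite[Lemma~21.11]{QuSo19}). Then, working on $A_-:=\{w<0\}$ with $w_-:=-w$ bounded, one has $w_-/h\to 0$ at infinity, so $w_-/h$ \emph{attains} its positive maximum at an interior point of $A_-$, and the usual quotient maximum principle forces $w_-/h$ constant, hence zero. The coercivity of $h$ is exactly what replaces your translation/compactness machinery; if you want to repair your argument, the fix is to change $\phi$, not to chase the minimizer.
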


\begin{proof}
Suppose by contradiction that $A_-:=\{w<0\}\ne \emptyset$. The function $w_-$ fulfils 
\[
\begin{cases}
-\Delta w_-\le c(x)\, w_-\le k\, w_-&\text{in $A_-$}\\
w_-= 0&\text{on $\partial A_-$}.
\end{cases}
\]
We can assume $k>0$, otherwise $c\le 0$ on $A_-$ and the standard comparison principle (which holds classically for $\Delta +c(x) $ when $c\le 0$, see \cite[Corollary 2.8]{HaLi11}) gives $w_-\le 0$. 
Since $k\, d^2<\pi^2$, by \cite[Lemma 21.11]{QuSo19} there exists a smooth $h:\R^{N-1}\times [0, d]\to \R$ fulfilling
\[
\begin{cases}
-\Delta h=k\, h&\text{in $\R^{N-1}\times \ ]0, d[$}\\
\inf_{\R^{N-1}\times [0, d]} h>0\\
h(x)\to +\infty & \text{for $|x|\to +\infty$}.
\end{cases}
\]
Since $w_-$ is bounded, we have $w_-/h\to 0$ as $|x|\to +\infty$, and therefore $w_-/h$ attains a maximum in $\overline{A}_-$. 
Since $w_->0$ in $A_-$ and $w_-=0$ on $\partial A_-$, the maximum is positive and attained in $A_-$, implying by \cite[Theorem 2.11]{HaLi11} that $w_-/h$ is constant. 
But since $w_-/h=0$ on $\partial A_-$, this implies that $w_-\equiv 0$, contradicting $A_-\ne \emptyset$.
\end{proof}

We next show the Liouville theorem for semicoercive epigraphs (recall definition \eqref{semicoercivity}). 
The proof we present is an application of the moving plane method.

\begin{theorem}[Liouville theorem on semicoercive epigraphs]
\label{thm_liouville}
Let $q\ge 1$, $\alpha\in \ ]0, 1[$ and $\Omega\subseteq \R^N$ be an entire open epigraph $\Omega=\{x \in \R^N : x_N > g(x_1, \dots, x_{N-1})\}$ with $g$ semicoercive and continuous. 
Then any solution of 
\beq
\label{Lproblem}
\begin{cases}
-\Delta v=v^q&\text{in $\Omega$}\\
v>0&\text{in $\Omega$}\\
v=0&\text{on $\partial\Omega$}
\end{cases}
\eeq
belonging to the class
\[
C^\alpha_g:=\{v\in C^2(\Omega)\cap C^0(\overline{\Omega}): v\in C^\alpha(T_\lambda) \ \forall \lambda\}
\]
where 
\beq
\label{defTlambda}
T_\lambda:=\big\{x \in \Omega : x_N < \lambda\big\},
\eeq
satisfies $\partial_N v>0$ in $\Omega$.
Moreover, \eqref{Lproblem} has no bounded solution in $C^\alpha_g$ if 
\begin{enumerate}
\item
$N\le 11$ and $q\ge 1$
\item
$N\ge 12$ and $1\le q<q_c$, where
\beq
\label{pcritfarina}
q_c:=\frac{(N-3)^2-4(N-1)+8\sqrt{N-2}}{(N-3)(N-11)}.
\eeq
\end{enumerate}
 \end{theorem}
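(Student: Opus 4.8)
The plan is to prove Theorem~\ref{thm_liouville} by the method of moving planes in the direction $e_N$, exploiting the semicoercivity of $g$ to control the solution near the boundary via the maximum principle in strips (Lemma~\ref{lem_max_strip}), and then to invoke known Liouville theorems for the half-space and the whole space to obtain the nonexistence statement.

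\textbf{Step 1: Moving planes and monotonicity.} For $\lambda \in \R$ let $T_\lambda$ be as in \eqref{defTlambda}, and for a solution $v$ consider the reflection $v_\lambda(x) := v(x', 2\lambda - x_N)$ on the reflected strip. Set $w_\lambda := v_\lambda - v$ on the overlap $\Sigma_\lambda := T_\lambda \cap \{x_N > 2\lambda - \inf\{t : (x',t)\in\Omega\}\}$ (the part of $T_\lambda$ whose reflection also lies in $\Omega$). Since $g$ is convex (being semicoercive, hence a genuine convex epigraph in the relevant coordinates — here I would record that semicoercive continuous $g$ has convex epigraph), the reflected domain is contained in $\Omega$ for $x_N < \lambda$, so $w_\lambda \ge 0$ on $\partial\Sigma_\lambda$. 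The function $w_\lambda$ satisfies $-\Delta w_\lambda = c_\lambda(x)\, w_\lambda$ with $c_\lambda(x) = q\,\xi^{q-1}$ for some $\xi$ between $v$ and $v_\lambda$, hence $c_\lambda \in L^\infty(\Sigma_\lambda)$ when $v$ is bounded (and locally bounded in general, which is where the class $C^\alpha_g$ and the strip $T_\lambda$ enter). Because $\Sigma_\lambda$ is contained in a strip of width tending to $0$ as $\lambda \to -\infty$, Lemma~\ref{lem_max_strip} gives $w_\lambda \ge 0$ for all sufficiently negative $\lambda$. One then sets $\lambda^* := \sup\{\lambda : w_\mu \ge 0 \text{ for all } \mu \le \lambda\}$ and shows by the usual argument (strong maximum principle plus Hopf lemma to get strictness, then a contradiction via Lemma~\ref{lem_max_strip} applied on a narrow strip near a putative touching point) that $\lambda^* = +\infty$. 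Letting $\lambda$ increase through all reals yields $v_\lambda \ge v$ on $\Sigma_\lambda$ for every $\lambda$, i.e.\ $v$ is nondecreasing in $x_N$; differentiating and applying the strong maximum principle to $\partial_N v$ (which solves a linearized equation) gives $\partial_N v > 0$ in $\Omega$.

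\textbf{Step 2: Nonexistence.} Suppose $v$ is a bounded solution in $C^\alpha_g$. By Step~1 $v$ is increasing in $x_N$, so $V(x') := \lim_{x_N \to +\infty} v(x', x_N)$ exists, is bounded, and by standard elliptic estimates (translating in $x_N$ and passing to the limit) the translates $v(\cdot, \cdot + t)$ converge locally uniformly to a function $\bar v$ on $\R^{N-1} \times \R$ (or, after also exploiting the coercive part of $g$, on a half-space or on all of $\R^N$) solving $-\Delta \bar v = \bar v^q$ with $\bar v \ge 0$; $\bar v$ is independent of $x_N$ when the limit domain is all of $\R^N$, so $-\Delta_{x'} \bar v = \bar v^q$ on $\R^{N-1}$. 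If $\bar v \not\equiv 0$ this contradicts the classical Liouville theorem on $\R^{N-1}$ (in the range of $q$ listed, using that $q_c$ in \eqref{pcritfarina} lies below the relevant Joseph--Lundgren-type exponent in one lower dimension — this is Farina's threshold \cite{Far07}); and $\bar v \equiv 0$ is impossible by a lower bound on $v$ away from $\partial\Omega$ (e.g.\ comparison with a torsion-type subsolution on a fixed ball, using $v > 0$). When the limiting domain is a half-space one invokes instead the half-space Liouville theorem of \cite{Dan92} (valid for $1 \le q$ when $N \le 11$ and for $q < q_c$ when $N \ge 12$), again in the appropriate dimension. I would reduce the semicoercive case to one of these two via the splitting $\R^{N-1} = V \oplus V^\perp$ from \eqref{semicoercivity}: along the directions in $V^\perp$ the function $g$ is constant, so sliding to $x_N = +\infty$ together with the translation invariance in $V^\perp$ produces a limiting problem that is effectively set on a coercive epigraph in $\mathrm{dim}(V)+1$ variables, and one iterates/applies the coercive-epigraph result of \cite{EsLi82} there.

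\textbf{Main obstacle.} The delicate point is the start of the moving-plane procedure: one must ensure that $w_\lambda$ is bounded below and that Lemma~\ref{lem_max_strip}'s hypothesis $k\,d^2 < \pi^2$ is genuinely met for $\lambda \ll 0$. Boundedness below of $w_\lambda = v_\lambda - v$ is where the unbounded-solution case is subtle: $v_\lambda$ is controlled on the strip by membership in $C^\alpha(T_\lambda)$, but $-v$ need only be locally bounded, so one must argue that the overlap $\Sigma_\lambda$ is contained in a fixed-width strip near $\partial\Omega$ where $v$ is small (using $v=0$ on $\partial\Omega$ and the $C^\alpha$ bound), making both the width $d$ and the coefficient bound $k \sim q\|v\|_{L^\infty(\Sigma_\lambda)}^{q-1}$ small simultaneously. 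Handling the non-smoothness of $\partial\Omega$ (corners of the convex epigraph) while running the Hopf-lemma step at $\lambda^*$, and correctly organizing the dimension reduction in the semicoercive case so that the exponent range \eqref{pcritfarina} is preserved, are the other places where care is required.
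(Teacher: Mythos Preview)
Your overall architecture (moving planes in $x_N$, then limit as $x_N\to+\infty$ and Farina) matches the paper, but there is a genuine gap in the continuation step, and your ``main obstacle'' paragraph misidentifies the difficulty.

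First, boundedness below of $w_\lambda=v_\lambda-v$ on $T_\lambda$ is \emph{not} an issue: membership in $C^\alpha_g$ means $v\in C^\alpha(T_\mu)$, hence $v$ is bounded on every $T_\mu$, and the reflected values $v_\lambda$ on $T_\lambda$ are values of $v$ on $T_{2\lambda}$. So $w_\lambda$ is bounded on $T_\lambda$ for free, and the start of the moving plane (small $\lambda>\inf g$) follows immediately from Lemma~\ref{lem_max_strip}. Also, your parenthetical that semicoercivity of $g$ implies convexity is simply false (and not needed: the reflection of $T_\lambda$ across $\{x_N=\lambda\}$ lies in $\Omega$ because $2\lambda-x_N>\lambda>g(x')$, no convexity required).

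The real problem is your sentence ``one then shows by the usual argument \dots\ that $\lambda^*=+\infty$''. The usual argument needs a \emph{touching point} for $w_{\lambda^*}$, but $T_{\lambda^*}$ is unbounded and $w_{\lambda^*}$ need not attain its infimum; there is no decay of $v$ at infinity to force this, and for $\lambda$ large Lemma~\ref{lem_max_strip} no longer applies globally since the strip is wide and $\|c_\lambda\|_\infty$ is not small. This is precisely where the semicoercive structure must enter, and you do not use it. The paper's device is to run the moving plane \emph{simultaneously for an entire family} $\mathcal{S}_\varphi$ of solutions closed under translations in the $x''$-directions (those where $g$ is constant). If the procedure fails just past some $\lambda$, one locates bad points $x_n$; Lemma~\ref{lem_max_strip} forces $v_n(x_n)\ge\delta(\lambda)>0$, coercivity of $g$ in $x'$ bounds $x_n'$, and one translates away the (possibly unbounded) $x_n''$ to stay in $\mathcal{S}_\varphi$. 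Compactness of $\mathcal{S}_\varphi$ in $C^0_{\rm loc}\cap C^2_{\rm loc}$ then produces a limit solution with $w_\lambda(x_0)\le 0$ at an interior or top-boundary point, contradicting \eqref{hopfc}. Your sketch has no analogue of this translation--compactness mechanism, and without it the argument does not close.

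For the nonexistence step, your reduction is more convoluted than necessary. The limit $u(x',x''):=\lim_{t\to+\infty}v(x',x'',t)$ is directly a bounded positive solution of $-\Delta u=u^q$ on all of $\R^{N-1}$; the crucial point (which you omit) is that $u$ is \emph{stable}, inherited from the positive supersolution $\partial_N v>0$ of the linearized equation. Farina's theorem~\cite{Far07} then excludes $u$ in the stated range. No detour through half-space results or Esteban--Lions is needed.
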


\begin{proof}
The case $q=1$ can be dealt with through \cite[Remark 8.11]{QuSo19}: the proof described there only requires that there are arbitrarily large balls contained in $\Omega$, in which case there is no positive solution to $-\Delta v=v$ on $\Omega$ at all. 
Thus we suppose that $q>1$. 

By translation invariance of the equations, we can suppose also
 \[
\inf_{\R^{N}} g=0.
\]
If $V\subseteq\R^{N-1}$ is given in \eqref{semicoercivity} and has dimension $M\le N-1$, we will assume $V=\{(x_1, \dots, x_M, 0, \dots, 0)\}$. For $x=(x_1, \dots, x_N)\in \R^N$ we will use the notation 
\[
\R^M\ni x'=(x_1, \dots, x_{M}),\qquad \R^{N-M-1}\ni x''=(x_{M+1}, \dots, x_{N-1}).
\]
 Correspondingly, points in $\R^{N}$ will be denoted by $(x', x'', t)$ for $t\in \R$.

\smallskip
{\em Step 1: compactness of solutions.}\\
For a given non-decreasing function $\varphi:\R_+\to \R_+$, define the family of functions
\[
{\cal S}_{\varphi}:=\left\{ v\in C^2(\Omega)\cap C^0(\overline{\Omega}): v \ \text{ solves \eqref{Lproblem} and } \|v\|_{C^\alpha(T_\lambda)}\le \varphi(\lambda) \; \forall \lambda\right\}.
\]
Clearly, any solution of \eqref{Lproblem} in the class $C^\alpha_g$ belongs to ${\cal S}_{\varphi}$ for some $\varphi$.
By the translation invariance of the equation and of $g$, if $v\in {\cal S}_{\varphi}$, then $v(\,\cdot\, + (0, x'', 0))\in {\cal S}_{\varphi}$ for any given $x''\in \R^{N-M-1}$. 
We claim that ${\cal S}_{\varphi}$ is precompact in $C^0_{\rm loc}(\overline{\Omega})$. 
Indeed, by Ascoli-Arzelà theorem and a diagonal argument, together with the lower semicontinuity of the norm $\| \cdot \|_{C^\alpha(T_\lambda)}$ with respect to point-wise convergence, the set $\{v\in C^0(\overline{\Omega}): \|v\|_{C^\alpha(T_\lambda)}\le \varphi(\lambda) \; \forall \lambda\}$ is precompact in $C^0_{\rm loc}(\overline{\Omega})$. 
On the other hand local elliptic estimates ensure for any $v\in {\cal S}_{\varphi}$ it holds
\[
\|v\|_{C^{2, \alpha}(B_r)}\le C(N, r, \alpha, \varphi (\lambda))<\infty
\]
for any $\lambda >0$ and any ball $B_r$ such that $B_{2r}\subseteq T_\lambda$. 
Since the $(T_\lambda)$ exhaust $\Omega$ for $\lambda\to +\infty$, any limit of $(v_n)\subset {\cal S}_{\varphi}$ is still a solution of \eqref{Lproblem}, except possibly for the positivity condition. 
By the strong minimum principle, we thus infer that the closure of ${\cal S}_{\varphi}$ in $C^0_{\rm loc}(\overline{\Omega})$ is $\{0\}\cup {\cal S}_{\varphi}$. 
Note by this last discussion that $C^0_{\rm loc}(\overline{\Omega})$ convergence in ${\cal S}_{\varphi}$ implies $C^2_{\rm loc}(\Omega)$ convergence.

\smallskip
{\em Step 2: reformulation by moving plane.}\\
 We aim at proving that for any fixed non-decreasing $\varphi$, 
 \beq
 \label{claimmon}
 \partial_N v>0 \quad \text{in $\Omega$, for all $v\in {\cal S}_{\varphi}$}.
 \eeq
 In order to prove this claim, we shall show that for all $v\in {\cal S}_{\varphi}$ and all $\lambda>0$ it holds
\beq
\label{movplane}
w_\lambda(x):=v(x',x'', 2\, \lambda-x_N)- v(x', x'', x_N)\ge 0\quad \text{in } T_{\lambda} 
\eeq
where $T_\lambda$ is defined in \eqref{defTlambda}.
Let us show how \eqref{movplane} implies \eqref{claimmon}. Note that $w_\lambda$ satisfies
\beq
\label{eqwlambda}
-\Delta w_\lambda= c_\lambda\, w_\lambda\qquad \text{in $T_{\lambda}$},
\eeq
where 
\beq
\label{defc}
 c_\lambda(x) :=
 \begin{cases}
 \dfrac{v(x',x'', 2\, \lambda-x_N)^q- v(x', x'', x_N)^q}{v(x', x'', 2\, \lambda-x_N)- v(x', x'', x_N)} \ge 0&\text{if $v(x', x'', 2\, \lambda-x_N)\ne v(x', x'', x_N)$}\\[5pt]
 0&\text{if $v(x', x'', 2\, \lambda-x_N)=v(x', x'', x_N)$}.
 \end{cases}
 \eeq
In particular, $w_{\lambda}$ is a non-negative super-harmonic function in $T_{\lambda}$ which is positive on $\partial T_{\lambda}\cap {\rm Gr}\, (g)$ because 
\[
v(x', x'', 2\, \lambda-g(x', x''))>0=v(x', x'', g(x', x'')) \qquad \text{if $g(x', x'')<\lambda$}.
\]
Therefore $w_\lambda>0$ everywhere in $T_{\lambda}$. 
At any point $x_0\in \{x_N=\lambda>g(x', x'')\}$, $w_\lambda$ vanishes attaining its minimum and by the continuity of $g$ there exists a ball $B\subset T_{\lambda}$ tangent to $\partial T_{\lambda}$ at $x_0$. 
Therefore, Hopf Boundary point lemma ensures $\partial_N w_\lambda(x_0)<0$. All in all, since $\partial_N w_\lambda(x_0)=-2\, \partial_N v(x_0)$, we actually proved the following:
\beq
\label{hopfc}
w_\lambda\ge 0\ \text{ on $T_{\lambda}$}\quad \Longrightarrow\quad 
\begin{cases}
 w_\lambda>0&\text{in $T_{\lambda}$}\\
 \partial_N v>0&\text{on $\{x_N=\lambda>g(x', x'')\}$}.
 \end{cases}
 \eeq
By the arbitrariness of $\lambda>0$, this will prove claim \eqref{claimmon} and consequently, by the arbitrariness of $\varphi$, the first statement of the Theorem.

To prove \eqref{movplane}, consider the set 
\[
E:=\big\{\lambda>0 :\forall v \in {\cal S}_{\varphi}\text{ \eqref{movplane} is true}\big\}.
\]
We will show that $E$ is non-empty, closed and open. This will imply \eqref{movplane} for all $\lambda>0$ by connectedness. 

\smallskip
 {\em Step 3: $E$ is closed and non-empty.}\\
Since $\lambda\mapsto w_{\lambda}(x)$ is continuous and $E$ is the intersection of the closed sets $\{\lambda: w_{\lambda}(x)\ge 0\}$ for $v\in {\cal S}_\varphi$ and $x\in T_{\lambda}$, we have that $E$ is closed.

We show now that there exists $\lambda_0>0$ independent of $v\in {\cal S}_{\varphi}$ such that \eqref{movplane} is true for all $\lambda\in \ ]0, \lambda_0[$. 
For $\lambda\le 1/2$ it holds $T_{2\lambda}\subseteq T_1$ and $0<v\le \varphi(1)$ on $T_{1}$. 
Recalling \eqref{eqwlambda}, we use the intermediate value theorem on \eqref{defc}, to infer that 
\[
\|c_\lambda\|_\infty\le q\, \|v\|^{q-1}_{L^\infty(T_{2\lambda})}\le q\, \varphi^{q-1}(1) .
\]
 Since $w_\lambda\ge 0$ on $\partial T_{\lambda}$, Lemma \ref{lem_max_strip} ensures that $w_\lambda\ge 0$ on $T_{\lambda}$ if $q\, (\varphi(1))^{q-1}\, \lambda^2<\pi^2$. 
Choosing 
 \[
 \lambda_0<\min\left\{\frac{1}{2}, \frac{\pi}{\sqrt{ q\, (\varphi(1))^{q-1}}}\right\}
 \]
 concludes this part of the proof.
 
 \smallskip
 {\em Step 4: $E$ is open.}\\
 Finally, we show that $E$ is open, by contradiction.
 Fix $\lambda\in E$ and suppose that there is a sequence $\lambda_n\to \lambda$, $v_n\in {\cal S}_\varphi$ such that the corresponding $w_{\lambda_n}$ is negative somewhere in $T_{\lambda_n}$. 
By Lemma \ref{lem_max_strip} the numbers
 \[
 k_n:=\sup \big\{c_{\lambda_n}(x): x\in T_{\lambda_n}, w_{\lambda_n}(x)<0 \big\}
 \]
 must satisfy $k_n\, \lambda_n^2\ge \pi^2$, hence for sufficiently large $n$ it holds $k_n\ge \pi^2/(2\, \lambda^2)$.
By the intermediate value theorem, on $\{w_{\lambda_n}<0\}\cap T_{\lambda_n}$ it holds $  c_{\lambda_n}(x)=q\, \xi_n(x)^{q-1} $
 for some $\xi_n(x)\in \ ] v_n(x', x'', 2\, \lambda-x_N), v_n(x', x'', x_N)[$, hence
 \[
 c_{\lambda_n}(x)\le q\, v_n^{q-1}(x).
 \]
 We infer that for sufficiently large $n$
\[
\frac{\pi^2}{2\, \lambda^2}\le \sup \big\{ q\, v_n^{q-1}(x): x\in T_{\lambda_n}, w_{\lambda_n}(x)<0 \big\}
\]
and therefore there exists $x_n=(x_n', x''_n, t_n)\in T_{\lambda_n}$ such that
\[
 w_{\lambda_n}(x_n)<0, \qquad v_n(x_n)\ge \delta=\delta(\lambda, q):=\left(\frac{\pi^2}{2\, q\, \lambda^2}\right)^{\frac{1}{q-1}}>0.
\]
From the coercivity of $g$ in the $x'$ variable, we have that $(x'_n)$ is bounded, and since $0<t_n\le \lambda_n\to \lambda$, $(t_n)$ is bounded as well. 
By passing to a not relabelled subsequence, we can assume that $t_n\to t_0 $ and $x_n'\to x_0'$. Note that $x_n\in T_{\lambda_n}$ implies, by the continuity and translation invariance of $g$, that 
\beq
\label{x0t}
x_0:=(x'_0, 0, t_0)\in \overline{T_\lambda}.
\eeq
 Setting 
\[
\tilde{v}_n(x):=v_n(x', x''+x_n'', x_N),
\] it holds $\tilde{v}_n\in {\cal S}_\varphi$ and
\beq
\label{negpos}
\tilde{w}_{\lambda_n}(x_n', 0, t_n)<0, \qquad \tilde{v}_n(x_n', 0, t_n)\ge\delta>0
\eeq
where as usual $\tilde{w}_{\lambda_n}$ is derived from $\tilde{v}_n$. 
By Step 1, up to a not relabelled subsequence, we can suppose that $\tilde{v}_n\to v\in {\cal S}_\varphi\cup\{0\}$ in $C^0_{\rm loc}(\overline{\Omega})$ and in $C^2_{\rm loc}(\Omega)$. 
In particular, \eqref{negpos} passes to the limit to give (recall \eqref{x0t})
\[
w_{\lambda}(x_0)\le 0, \qquad v(x_0)\ge \delta>0
\]
and the second inequality ensures that actually $v\in {\cal S}_\varphi$.
Since $\lambda\in E$ by assumption, it holds $w_\lambda\ge 0$ in $T_\lambda$ and thus the first inequality in the previous display ensures that $x_0$ is a minimum point for $w_{\lambda}$ on $\overline{T_{\lambda}}$. 
By \eqref{hopfc} it must hold $x_0\in \partial T_\lambda$, but from $v(x_0)>0$ we actually have $x_0\in \{x_N=\lambda>g(x')\}$, thus $\partial_N v(x_0)>0$. 
In particular $x_0\in \Omega$ and by the $C^2_{\rm loc}(\Omega)$ convergence of $\tilde{v}_n$ to $v$, it must hold $\partial_N \tilde{v}_n>0$ in a neighbourhood of $x_0$ for all sufficiently large $n$. 
Since $(x_n', 0, t_n)\to x_0=(x_0', 0, \lambda)$, this forces $\partial_N \tilde{v}_{n}(x_n', 0, t)$ to be positive for all $n$ sufficiently large and all $t$ sufficiently close to $\lambda$. 
In particular
\[
\tilde{w}_{\lambda_n}(x_n',0, t_n)=\int_{t_n}^{2\, 
\lambda_n-t_n}\partial_N 
\tilde{v}_n(x_n', 0, s)\, ds>0,
\]
for all sufficiently large $n$, contradicting the first inequality in \eqref{negpos} and completing the proof of \eqref{claimmon}.

\smallskip
{\em Step 5: nonexistence.} \\
We finally prove the non-existence statement. If $v$ is a bounded solution of \eqref{Lproblem}, let 
\[
u(x', x''):=\lim_{t\to +\infty} v(x', x'', t)
\]
which exists by \eqref{claimmon} and is bounded on $\R^{N-1}$. 
Then, arguing as in \cite[proof of Theorem 12]{Far07}, $u$ is a positive stable bounded solution of $-\Delta u=u^q$ on $\R^{N-1}$, which does not exist -- when $N\geq 3$ -- under the conditions stated in 
\cite[Theorem 1]{Far07}; 
if $N=2$, $-u''=u^q$ has no positive solutions in $\R$ by elementary means.
\end{proof}

We are ready to prove the Liouville result on convex epigraphs.

\begin{proof}[Proof of Theorem \ref{thm_main_liouville}]
By Lemma \ref{lem_rotation} and the invariance of the equation by orthogonal transformations, we can reduce after a rotation to the case where $H=\{x_N>g(x_1, \dots, x_{N-1})\} $ with $g$ convex and semicoercive. By Lemma \ref{regbconv} any bounded solution $v$ of \eqref{Lproblem} actually belongs to $C^\alpha(\overline{H})$ for a suitable $\alpha>0$ only depending on $\|v\|_\infty$, thus in particular it belongs to the class $C^\alpha_g$. Noting that the exponent $q_c$ given in \eqref{pcritfarina} is always greater than $2^*-1$, Theorem \ref{thm_liouville} gives the claim.
 \end{proof}
 
\vskip3mm
\noindent\textbf{Conflict of interest.} The authors have no competing interests to declare for this article.
\vskip3mm
\noindent\textbf{Data availability statement.} We declare that the manuscript has no associated data.
\vskip3mm

\end{document}